\tikzset{
block/.style={
  draw, 
  rectangle, 
  minimum height=1.0cm, 
  minimum width=2cm, align=center
  }, 
line/.style={->,>=latex'}
}
\definecolor{orange}{cmyk}{0,0.5,1,0.3}
 \newcommand{\comM}[1]{\textcolor{black}{#1}}
\newtheorem{lemma}{Lemma}
\newtheorem{theorem}{Theorem}
\newtheorem{proposition}{Proposition}
\newtheorem*{acknowledgments}{Acknowledgments}
\newtheorem{definition}{Definition}
\newtheorem{remark}{Remark}
\def\dst{\displaystyle}
\def\eps{\varepsilon}
\def\p{\partial}
\def\a{\alpha}
\def\g{\gamma}
\def\({\Bigl (}
\def\){\Bigr )}
\newcommand{\be}{\begin{equation}}
\newcommand{\1}{{\mathchoice {\rm 1\mskip-4mu l} {\rm 1\mskip-4mu l}{\rm 1\mskip-4.5mu l} {\rm 1\mskip-5mu l}}}
\newcommand{\f}{\frac}
\newcommand{\Log}{\text{Log}}
\newcommand{\Arg}{\text{Arg}}
\newcommand{\ee}{\end{equation}}
\newcommand{\bea}{$$ \begin{array}{lll}}
\newcommand{\eea}{\end{array} $$}
\newcommand{\bi}{\begin{itemize}}
\newcommand{\ei}{\end{itemize}}
\numberwithin{equation}{section}
\newtheorem{prop}{Proposition}
\DeclareMathOperator{\R}{{\mathbb R}}
\DeclareMathOperator{\C}{{\mathbb C}}
\DeclareMathOperator{\Z}{{\mathbb Z}}
\providecommand{\eps}{\varepsilon}
\renewenvironment{proof}{\noindent{\bf Proof.}}{\hfill
  $\blacksquare$\par\noindent}
\date{}
\title{{Estimating the division rate and kernel in the fragmentation equation}}
\begin{document}


\author{Marie Doumic
 \thanks{
INRIA Rocquencourt,  équipe-projet MAMBA, domaine de Voluceau, BP 105, 78153 Rocquencourt,
France. Email: marie.doumic@inria.fr.} 
\and Miguel Escobedo 
\thanks{
Universidad del Pa\'is Vasco, Facultad de Ciencias y Tecnolog\'ia, Departamento de Matem\'aticas, Barrio
Sarriena s/n 48940 Lejona (Vizcaya), Spain. Email: miguel.escobedo@ehu.es.}
\and Magali Tournus 
\thanks{Centrale Marseille, I2M, UMR 7373, CNRS, Aix-Marseille univ., Marseille, 13453, France. Email: magali.tournus@centrale-marseille.fr.}
}
\maketitle
\begin{abstract}
{We consider the fragmentation equation 
$$\dfrac{\p f}{\p t}(t,x) =-  B(x)f(t,x) +  \dst\int_{y=x}^{y=\infty} k(y,x)B(y)f(t,y) dy,$$
and address the question of estimating the fragmentation parameters -{ \it i.e.} the division
rate $B(x)$ and the fragmentation kernel  $k(y,x)$ - from measurements  of the size distribution $f(t,\cdot)$ at various times. 
This is a natural question for any application where the sizes of the particles are measured experimentally whereas the fragmentation rates are unknown, see for instance (Xue, Radford, Biophys. Journal, 2013) for amyloid fibril breakage.}
{Under the assumption of a polynomial division rate $B(x)=\alpha x^\gamma$ and a self-similar fragmentation kernel $k(y,x)=\f{1}{y}k_0(\f{x}{y})$, we use the asymptotic behaviour proved in (Escobedo, Mischler, Rodriguez-Ricard, Ann. IHP, 2004) to obtain uniqueness of the triplet $(\alpha,\gamma,k_0)$ and a representation formula for $k_0$. To }{invert} {this formula, 
}
{one of the  }{delicate points is to prove that the Mellin transform of the asymptotic profile never vanishes, what we do through  the \textcolor{black}{use of the Cauchy integral}. 
}
\end{abstract}

{\bf Keywords :} Non-linear inverse problem, Size-structured partial differential equation, 
Fragmentation equation, Mellin transform, Functional equation.

{\bf 2015 MSC :} 35Q92, 35R06, 35R09, 45Q05, 46F12, 30D05.


\section{Introduction}

This paper presents a theoretical study about the identification of the {functional} parameters 
of  the continuous fragmentation equation. {There are many possible applications to this problem, {\color{black} e.g.  flocculation~\cite{bortz2015inverse}, mining industry~\cite{bertoin2005}, bacterial growth~\cite{robert:hal-00981312}. T}his question {\color{black}having} first emerged from the application to amyloid fibril breakage~\cite{XR13}, let us first explain briefly the motivation which guided this study}.


In { the biophysical article} \cite{XR13}, the authors study how a set of { aggregates of proteins (called} amyloid fibrils) behaves when undergoing turbulent agitation.
In vivo, amyloid fibrils break apart because of both enzymatic processes and agitation.
The question addressed in \cite{XR13} is to determine what is the effect of agitation on the 
fragmentation {\it rate} (what is the probability that a fibril of given length breaks apart?) 
and fragmentation kernel ({\it where} a fibril is more likely to break apart?) of proteins of 
amyloid types. The method used is the identification of the parameters of a model describing 
the fragmentation process {by minimising a least squares functional, which represented the discrepancy between the experimental measurements and the model outputs}{, after parametrization of the problem. The best-fit model which came as an outcome in~\cite{XR13} happened to fall into the scope of assumptions where the asymptotic behaviour of the fragmentation equation has been thorougly studied (see~\cite{EMR05} and the details provided in Section~\ref{sec:asymp}): a power law for the fragmentation rate, and a self-similar form for the fragmentation kernel. Our question was then: can these asymptotic results be used to estimate, in a non-parametric way, the fragmentation kernel and the power law of the fragmentation rate?}

{ This leading idea - how to use measurements on the asymptotic distribution to estimate functional parameters of the equation - has been first} initiated in~\cite{PZ} and continued e.g. in~\cite{BDE14,DPZ09} for the growth-fragmentation equation. { However, up to now, the studies  were focused on the question of estimating the division~{\it rate}, whereas the division {\it kernel} was assumed to be known. As shown below, estimating the fragmentation kernel reveals much more difficult, being a {\it severely ill-posed} inverse problem.}

{{ To our knowledge, very few studies exist on the question of estimating the division kernel from measurements of fragmenting particles.} In~\cite{hoang}, {{V.H. Hoang} estimated the fragmentation rate on a growth-fragmentation process, but assumed much richer data since the sizes of daughter and mother particles are measured at each time of division - such precise measurements may be possible for growing and dividing individuals such as bacteria, but  not for particles or polymers. { In \cite{hoang:hal-01623403}, V. H. Hoang et al. investigate a problem much closer to ours, for a growth-fragmentation equation with linear growth and constant fragmentation rate, taking into account a statistical treatment for the noise, but taking for granted the validity of a reconstruction formula of the same type as ours (see below Theorem~\ref{Well_posedness_inverse_pb}, (iii)). Finally, we can also cite the least square approach used i}n~\cite{bortz2015inverse}, { which also studies} the question of estimating the fragmentation kernel  on a more general dynamical system. { The authors} keep the time dependency of the equation and use a least squares approach on the cumulative distribution function.}


\subsection{Assumptions, notations and asymptotic behaviour}
\label{sec:asymp}
Fragmentation processes describe the mechanisms by 
which particles can break apart into smaller pieces.
In the simplest fragmentation models,
the particles are fully identified by their size (or volume
or number of elementary particles), which is a positive real number for continuous
models.
The fragmentation equation for one dimensional particles (or linear particles) 
describes the evolution
of  the density  $f(t,x) \geq 0$ of particles of 
size $x \in \mathbb{R}^+$ at time $t \geq 0$.
In the continuous setting, the pure fragmentation equation is written
\begin{equation}\label{eq:frag}
\left\{
 \begin{aligned}
\dfrac{\p f}{\p t}(t,x)& =-  B(x)f(t,x) +  \dst\int_{x}^{\infty} k(y,x)B(y)f(t,y) dy,\\
  f(x,0) & = f_0(x).
 \end{aligned}
 \right.
 \end{equation}
 The expression {\it pure} fragmentation is to be understood here as a contrast with the growth-fragmentation equation \cite{SM15}
or the coagulation-fragmentation equation~\cite{LM2}.

 Equation \eqref{eq:frag} expresses that particles of size $x$ break apart 
 into smaller pieces with a fragmentation rate $B(x)$. { The kernel describes the probability distribution of the  mass of the pieces formed in each fragmentation event, assuming that such a fragmentation event takes place.}
 The kernel $k$ then satisfies 
 \begin{equation}
k(y,x) = 0 \;\;\;\text{for}\;\;\; x>y,  \qquad \dst\int_{0}^{y} x k(y,dx)  = 1.  
 \end{equation}
{ In the case of {\it binary} fragmentation, {\it i.e.} when the particle breaks into exactly two parts, we moreover have a symetry property: $k(y,dx)=k(y,y-dx)$, which leads to the fact that $\int\limits_0^y k(y,dx)=2.$}
The parameters $B$ and $k$ are commonly chosen according to the particular physical 
process being modelled. Here we do the following assumptions - which happen to be satisfied by the best-fit model in the biological study~\cite{XR13}.
\begin{enumerate}[label=Hyp-\arabic*]
\item \label{hyp1} The fragmentation rate $B(x)$ follows a power law: $B(x)=\alpha x^{\gamma},$ with $\gamma>0$ {and $\alpha>0$}.

Since $\g>0$, a long filament is more likely to break apart than a smaller one. 
Let us point out the specific case $\gamma=1$ for which the probability that a filament breaks apart is directly proportional to its size.

\item \label{hyp2}The kernel $k(y,x)$ has a self-similar form, {\it i.e.}  the abscissa $x$ where the protein filaments of size $y$
are likely to break up depends only on the ratio $x/y$. More specifically, 
we  assume that there exists a bounded, non negative measure $k_0$ such that
\begin{equation}
\label{S1E1}
k(y,x)=\frac{1}{y}k_0\left(\frac{x}{y}\right)
\end{equation}
and 
\begin{equation}
\label{S1E2}
supp \left(k_0\right) \subset [0, 1],\,\,\,  \int\limits_0^1dk_0(z) <+\infty, \quad  \int\limits_0^1 z dk_0(z)=1.
\end{equation}

\item There exists $\eps>0$ such that $k_0$ is a bounded continuous function on $[1-\eps,1]$ and on $[0,\eps]$, \label{hyp4}
\item \label{hyp3}
$\exists \eps>0, \quad 0< \eta_1 < \eta_2 <1 $ such that$ \quad k_0(z) \geq \eps , \quad z \in [\eta_1,\eta_2].$
\end{enumerate}
A frequent example is  to take $dk_0(z) = \kappa \1_{(0,1)}(z)dz$ with $\kappa>1$ ($\kappa=2$ for binary fragmentation): this { may be interpreted as the fact that}  for { linear filaments, { any location along a given}  filament  has an equal 
probability to break apart.}

{Let us comment briefly our assumptions. We need } Assumption~\eqref{hyp4} {to prove Theorem~\ref{Well_posedness_inverse_pb} below}. We could also have relaxed it, as in~\cite{balague:hal-00683148,DG10}, replacing it by the following
\begin{equation}\exists \nu >0, \quad C>0\; \text{s.t. }\; \int\limits_0^x k_0(dz) \leq C x^\nu.\label{hyp:DG}
\end{equation}
However, this would reveal of no practical interest when tackling real data, and in order to avoid useless technical developments we stick here to Assumption~\eqref{hyp4}.

Assumption~\eqref{hyp3} is the assumption~(2.12) in~\cite{MMP05}, under which we have the asymptotic self-similar behaviour that we recall below. {Although}  this assumption is not useful in itself for the results contained in our study, all our approach relies on this asymptotic behaviour.

\

For the sake of clarity, let us now  rewrite the equation \eqref{eq:frag} where  $B$ and $k$ are replaced by their specific expression.
\begin{equation}\label{eq:frag_rewritten}
\left\{
 \begin{aligned}
\dfrac{\p f}{\p t}(t,x)& =-  \a x^{\g} f(t,x) + \a \dst\int_{x}^{\infty}y^{\g-1} k_0\left(\frac xy \right)f(t,y) dy,\\
  f(0,x) & = f_0(x).
 \end{aligned}
 \right.
 \end{equation}
 
 Under  assumptions~\eqref{hyp1}, \eqref{hyp2} {and~\ref{hyp3}}, {if $f_0 \in L^1\big(\R^+,(1+x^{1+m})dx\big),$ with $m>1$,} Theorem 3.{2}. in~\cite{MMP05} states that the fragmentation equation~\eqref{eq:frag_rewritten} has a {unique}  solution
 in ${ \mathcal{C}([0,T);{L}^1(\mathbb{R}^+,xdx))
\cap L^1(0,T;L^1(\R^+,x^{\gamma+{m}} dx)}$, where we define
$$L^1(\R^+, \mu):=\biggl\{f:\R^+\to \R,\quad \int\limits_0^\infty \vert f(x)  \vert d\mu (x) <\infty\biggr\}, \qquad L^1(\R^+)=L^1(\R^+,dx).$$
It can be seen by formal integration that equation \eqref{eq:frag} preserves the mass of the system, and that the total number of protein filaments
 is increasing with time
\begin{equation}
\label{mass_conservation}
 \begin{aligned}
  \dfrac{d}{dt} \dst\int_0^{+ \infty} f(t,x) dx& = \dst\int_0^{+ \infty} B(x)f(t,x)dx, \qquad \text{number of clusters increases,}\\
    \dfrac{d}{dt} \dst\int_0^{+ \infty} xf(t,x) dx& =0,  \qquad\qquad \qquad  \qquad \qquad \text{mass conservation}.\\
 \end{aligned}
\end{equation}

Let us point out that there may exist non-preserving-mass solutions: uniqueness is true only in ${ \mathcal{C}([0,T);{L}^1(\mathbb{R}^+,xdx))
\cap L^1(0,T;L^1(\R^+,x^{\gamma+m} dx)}$.
In particular, the fact that the solution is in $L^1(0,T;L^1(\R^+,x B(x) dx)$ is crucial.
For instance, for $B(x) = \dfrac{x}{2} (1+x)^{-r}, \; r \in (0,1)$, $k_0 = 2 \mathbb{1}_{[0,1]}$, and for the initial condition $f_0  \in L^1(\mathbb{R^+}, xdx) \setminus L^1(\mathbb{R^+}, x B(x)dx)$ defined as
\begin{equation}
 f_0(x) =  \exp \left(- {\dst\int_0^x} \dfrac{3-ru (1+u)^{-1}}{2 \lambda (1+u)^r + u}du\right),
\end{equation}
the author of \cite{Stewart90} points out that there is a solution 
$f(t,x)= \exp(\lambda t) f_0(x)$ belonging to $\mathcal{C}([0,T);L^1(\mathbb{R}^+,(1+x)dx))$ and for which the total mass of the system increases exponentially fast.

Besides the well-posedness properties, the qualitative behaviour of the fragmentation equation was also deeply explored, {\it e.g.}~\cite{balague:hal-00683148,EMR05,MMP05,SM15}. Under assumptions \eqref{hyp1} \eqref{hyp2} \eqref{hyp3}, it has been proven (Theorem~3.2. in~\cite{MMP05}) that the solution $f(t,x)$ satisfies
\begin{equation}
\label{asbe}
\lim\limits_{t\to \infty} \int\limits_0^{+ \infty} x\big\vert f(t, x) -  t^{\f{2}{\gamma}}
g\big(x  t^{\f{1}{\gamma}}\big)\big\vert dx = 0,
\end{equation}
where {under the extra assumption~\ref{hyp4}} the self-similar profile $g$ satisfies {(Theorem 1 of~\cite{DG10})}:

\begin{eqnarray}
\label{S1E10}
\forall k\ge 0,\,\,\,x^{k+1} g\in W^{1,1}(0, \infty),\quad x^{k} g \in L^\infty\cap L^1(0, \infty),\quad \,\, g \in W^{1,\infty} _{ loc  }(0, \infty)
 \end{eqnarray}
 and is the unique solution in $L^1 _{ loc }(0, \infty)\cap L^1(xdx)$ to
\begin{eqnarray}\label{eq:g}
z g'(z) + (2 + \a \gamma z^{\gamma})  g(z)&=& \a \gamma \dst\int_{z}^{\infty} \dfrac{1}{u} k_0\left(\frac{z}{u}\right) u^{\gamma} g(u) \;du,\,\,\,\hbox{in}\,\,\,\mathcal D'(0, \infty)\\
 \int_0^\infty zg(z)dz &=&\rho.\label{eq:gBIS}
\end{eqnarray}

Notice that it follows from \eqref{S1E10} that:
\begin{eqnarray}
\label{S1E10B}
 g \in L^1(x^k dx),\,\,\forall k\ge 0.
 \end{eqnarray}
{Assumption~\ref{hyp4} is necessary to have $k\geq 0$ in~\eqref{S1E10B}, without assumption~\ref{hyp4} we only have $g\in L^1(x^kdx)$ for $k\geq 1,$ see~\cite{MMP05}}.

Since the total mass of the solutions that we consider is preserved, the parameter $\rho$ is determined by the 
initial condition: $\rho: = \dst\int_0^{+ \infty}x f_0(x) dx$.
Existence and uniqueness of \eqref{eq:g} are detailed in \cite{EMR05}, see also \cite{M1,DG10}. 

{Under additional assumptions, a polynomial rate of convergence for the limit~\eqref{asbe} (corresponding to an exponential rate of convergence for the growth-fragmentation equation, {\it i.e.} a spectral gap) is obtained in~\cite{CCM,balague:hal-00683148,SM15}.
Here we do not work under these supplementary assumptions.
} 
\subsection{Formulation of the inverse problem}
Let us assume that, as in~\cite{XR13}, the observation consists in measurements of the sizes of polymers/aggregates in samples taken at different times. { We discuss briefly in the conclusion how measurements - and their noise - may be further modeled, and in this paper we take for granted that from such measurements we are able to obtain estimates of the density distribution at several times.} 


The first inverse problem then states as follows: Given  measurements of $f(t_i,x)$ solution to the equation~\eqref{eq:frag_rewritten} at various times $t_i,$ is it possible to estimate the functional parameters of~\eqref{eq:frag_rewritten}, namely the triplet $(\alpha,\gamma,k_0) \in (0,+\infty)\times (0,+\infty) \times {\cal M}^+ (0,1)$?

Following the idea of~\cite{PZ}, we can take advantage of the asymptotic results recalled above, and, for {\it sufficiently large} times $t$, consider that measuring $f(t,x)$ provides a measurement of the asymptotic profile $g$ defined as the unique solution to~\eqref{eq:g} in $ L^1((1+x^{\gamma +1})dx).$ We are then led to reformulate the inverse problem as follows:

\

\begin{center}
\begin{minipage}{0.8\textwidth}
{\bf Inverse Problem (IP):} 
Given  a measurement of $g(x)$ solution in $ L^1((1+x^{\gamma +1})dx)$ of Equation~\eqref{eq:g}, is it possible to estimate the functional parameters of~\eqref{eq:g}, namely the triplet $(\alpha,\gamma,k_0) \in (0,+\infty)\times (0,+\infty) \times {\cal M}^+ (0,1)$?
\end{minipage}
\end{center}

\

\noindent
Though this formulation strongly recalls the one obtained in previous studies~\cite{PZ,BDE14} for the estimation of the division {\it rate} (here assumed to be given by the simple parametric form $\alpha x^\gamma$), estimating the division {\it kernel} $k_0$ reveals a much harder and more ill-posed problem. 


{Expliciting general conditions under  which, for a given function $g$ satisfying~\eqref{S1E10}, there exists a triplet $(\alpha,\;\gamma,\;k_0)\in \R^{+*}\times\R^{+*}\times {\cal M}^+(0,1)$ such that  $g$ satisfies~\eqref{eq:g} is an interesting and difficult question in itself, however not directly useful to solve ${\bf (IP)}$, for which we can {\it assume} this existence - as it is often the case in the field of inverse problems - rather than prove it.}

{ In this article, as a first theoretical step towards solving ${\bf (IP)}$, we thus focus on two important properties: uniqueness (Theorem~\ref{S2T1}), and how we can characterise such a triplet provided it exists (Theorem~\ref{Well_posedness_inverse_pb}). 
Stability with respect to a noisy measurement of $g$ has to be searched in a convenient regularity space, and, as the formulation $(iii)$ of Theorem~\ref{Well_posedness_inverse_pb} shows a severe ill-posedness of the problem, this stability is expected to be very weak. Together with numerical solution, this will be the subject of future research. } 

\section{Main results}

{Our method strongly relies on the Mellin transform of the equation, which, as in~\cite{BDE14}, appears to be a somewhat intrinsic feature of the equation: as shown below, it provides us with an explicit formulation of the kernel $k_0$ in terms of the inverse Mellin transform of a functional of the Mellin transform of $g$.}

\subsection{Formulation of the stationary equation \eqref{eq:g} in Mellin coordinates}

We  first recall the definition of the Mellin transform. 
{
\begin{definition}
 Let $\mu$ be a measure over $\R^+$. We denote by $M[\mu]$ the Mellin transform of $\mu$, defined by the integral
 \begin{equation}
 M[\mu] (s) = \dst\int_{0}^{+ \infty} x^{s-1}\; d\mu(x),
\end{equation}
for those values of $s$ for which the integral exists.
\end{definition}
\begin{remark}
 If the integral exists for some $u \in \R$, then, it converges for $s \in u + i \R$. If the integral exists for $u$ and $v$ in $\R$, $(u<v)$
 then it exists for $w \in (u,v)$.
Thus, in general, the Mellin transform of a measure is defined in a vertical band of the complex plane.
\end{remark}
}
Since it has been proven in \cite{DG10} that for all $k \geq 0$, $x^{k+1} g \in W^{1,1}(\mathbb{R}^+)$ and $x^k g \in L^\infty(\R^+),$
 we may define 
 \begin{equation}
G(s):=M[g](s), \qquad K_0(s):=M[k_0](s),\qquad \Re e(s) {\geq 1}.
\end{equation}

We now use the Mellin transform in  equation \eqref{eq:g} to obtain a non-local functional equation.
\begin{prop} 
\label{Prop2}Suppose that a function $g$,  satisfying properties  \eqref{S1E10}, 
is a solution of  the equation \eqref{eq:g} in the sense of distributions on $(0, \infty)$, for some kernel $k_0$ compactly supported in $[0, 1] $ and satisfying \eqref{S1E2}. Then, its Mellin transform:
\begin{equation}
\label{S2EG1}
G(s)=\int_0^\infty x^{s-1}g(x)dx
\end{equation}
is well defined {and analytic} for all $s\in \C$ such that $\Re e(s)\ge 1$, and it satisfies
 \begin{equation}\label{Formula}
(2-s) G(s) =  \a \g (K_0(s)-1) G(s+\g)\,\,\,\quad \forall s\in \C,\quad \Re e(s){>1}.
 \end{equation}
\end{prop}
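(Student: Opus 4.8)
The plan is to derive the functional equation \eqref{Formula} by testing the distributional equation \eqref{eq:g} against the monomial test functions $x \mapsto x^{s-1}$ and carefully justifying each integration. First I would record why $G(s)$ is well defined and analytic on $\{\Re e(s)\ge 1\}$: by \eqref{S1E10} (equivalently \eqref{S1E10B}) we have $g\in L^1(x^kdx)$ for every $k\ge 0$, so for $\Re e(s)\ge 1$ the integral $\int_0^\infty x^{s-1}g(x)\,dx$ converges absolutely (splitting at $x=1$ and using $x^{\Re e(s)-1}\le 1+x^k$ for a large enough $k$), and analyticity follows from differentiation under the integral sign (the $x$-derivatives $x^{s-1}\log x$ are again dominated by $g$ times an integrable power). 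The same argument shows $G(s+\gamma)$ is well defined for $\Re e(s)>1-\gamma$, in particular for $\Re e(s)>1$, and that $K_0(s)=M[k_0](s)$ is finite for $\Re e(s)\ge 1$ because $k_0$ is a bounded measure supported in $[0,1]$, so $\int_0^1 z^{\Re e(s)-1}\,dk_0(z)\le \int_0^1 dk_0(z)<\infty$.

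Next I would multiply \eqref{eq:g} by $x^{s-1}$ and integrate over $(0,\infty)$, treating the three resulting terms. For the transport term $\int_0^\infty x^{s-1}\,x g'(x)\,dx = \int_0^\infty x^{s}g'(x)\,dx$, I integrate by parts; the boundary terms vanish because $x^{s+1}g(x)\in W^{1,1}$ forces $x^{s}g(x)\to 0$ at $0$ and $\infty$ along the relevant range of $s$ (this uses \eqref{S1E10}, which gives decay and regularity of $g$ and its weighted versions), yielding $-s\,G(s)$. The linear zeroth-order terms give $2G(s)+\alpha\gamma\int_0^\infty x^{s-1+\gamma}g(x)\,dx = 2G(s)+\alpha\gamma G(s+\gamma)$, where the shift in the argument is exactly the reason we need $\gamma$-better integrability of $g$, again supplied by \eqref{S1E10B}. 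The only genuinely non-trivial term is the fragmentation integral $\alpha\gamma\int_0^\infty x^{s-1}\int_x^\infty \tfrac1u k_0(\tfrac xu)u^\gamma g(u)\,du\,dx$: here I would apply Fubini (legitimate because, with $z=x/u\in[0,1]$, the double integral is bounded by $\int_0^1 z^{\Re e(s)-1}dk_0(z)\cdot\int_0^\infty u^{\Re e(s)-1+\gamma}g(u)\,du<\infty$ using the absolute convergence established in the first step), swap the order of integration, and substitute $x=uz$. This turns the inner $x$-integral into $\int_0^1 (uz)^{s-1}k_0(dz)=u^{s-1}K_0(s)$, so the whole term collapses to $\alpha\gamma\,K_0(s)\int_0^\infty u^{s-1+\gamma}g(u)\,du = \alpha\gamma\,K_0(s)\,G(s+\gamma)$.

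Collecting the three pieces gives $-sG(s)+2G(s)+\alpha\gamma G(s+\gamma)=\alpha\gamma K_0(s)G(s+\gamma)$, i.e. $(2-s)G(s)=\alpha\gamma(K_0(s)-1)G(s+\gamma)$, which is \eqref{Formula}; this identity holds first for real $s>1$ and then for all complex $s$ with $\Re e(s)>1$ by analytic continuation, since both sides are analytic there (products and compositions of the analytic functions $G$, $K_0$ identified above). The main obstacle is the rigorous justification of the manipulations involving distributions: strictly speaking $g$ only solves \eqref{eq:g} in $\mathcal D'(0,\infty)$, so $x^{s-1}$ is not a licit test function (it is not compactly supported in $(0,\infty)$), and one must approximate it by smooth cutoffs $\chi_n(x)x^{s-1}$, pass to the limit in each term, and control the error terms $\chi_n' g$ — this is where the sharp regularity/decay statements \eqref{S1E10} (all weighted derivatives of $g$ integrable) are essential, since they let the cutoff errors vanish. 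The Fubini step and the vanishing of the boundary terms are the two places where one has to be most careful, but both are handled by the uniform-in-the-strip integrability bounds coming from \eqref{S1E10B} together with the compact support and finite mass of $k_0$ from \eqref{S1E2}.
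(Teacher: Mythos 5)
Your proposal is correct and follows essentially the same route as the paper's proof: analyticity of $G$ from $x^r g\in L^1(0,\infty)$ for all $r\ge 0$, integration by parts giving $\int_0^\infty x^s g'(x)\,dx=-sG(s)$, and Fubini plus the substitution $x=uz$ to turn the fragmentation integral into $K_0(s)G(s+\gamma)$. If anything you are more explicit than the paper about the boundary terms, the cutoff approximation needed because $x^{s-1}$ is not compactly supported, and the domination justifying Fubini — details the paper passes over by simply invoking the integrability properties \eqref{S1E10}.
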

\begin{remark}
\label{S2RFi}
As far as  $K_0(s)\not =1$, the  equation \eqref{Formula} may also be written as
 \begin{eqnarray}
&&G(s+\gamma )=\Phi (s)G(s) \label{Formula20},\qquad {\text{with }}\\
&&\Phi (s):=\frac {2-s} {\alpha \gamma (K_0(s)-1)}\label{Formula22}.
\end{eqnarray}
Since the support of the measure $k_0$ is contained in $[0, 1]$, it follows from  \eqref{S1E2} that $\vert K_0(s)\vert<1$ {for $\Re e(s)>2$}. We deduce that $\Phi (s)$ is analytic, and then the equation  \eqref{Formula} is equivalent to \eqref{Formula20}, in that same region {$\Re e(s)>2$}.
\end{remark}
\begin{proof}
The analyticity property of $G$ in  $D_1$ follows from the property $x^{r}g\in L^1(0, \infty)$ for all $r\ge 0$ thanks to~\eqref{S1E10}.  By the integrability properties of $g$, the Mellin transform may be applied to both sides of \eqref{eq:g}.
In particular,  for all $s>1$:
\begin{equation}
\label{S2E489}
\int _0^\infty xg'(x) x^{s-1}dx=-s\int _0^\infty g(x) x^{s-1}dx=-sG(s).
\end{equation}

On the other hand, using Fubini's Theorem, that may be applied due to the integrability properties  \eqref{S1E10} of $g$  and the hypothesis on $k_0$, we obtain:
\begin{eqnarray}
\int _0^\infty x^{s-1} \dst\int_{x}^{\infty} \dfrac{1}{u} k_0\left(\frac{x}{u}\right) u^{\gamma} g(u) \;dudx &=&
\int _0^\infty u^\gamma  g(u)\int _0^u x^{s-1} k_0\left(\frac{x}{u}\right)dxdu \nonumber\\
&=&\int _0^\infty u^{s+\gamma-1}  g(u)\int _0^1 y^{s-1} k_0\left(y\right)dydu\nonumber\\
&=& K_0(s)G(s+\gamma).\label{S2E4898}
\end{eqnarray}
It immediately  follows from \eqref{S2E489} and \eqref{S2E4898}  that $G$ satisfies \eqref{Formula}. 
\end{proof}
 
\subsection{Uniqueness of the fragmentation rate and kernel}

We use the formulation~{\eqref{Formula}} of Proposition~\ref{Prop2} to prove uniqueness of the parameters $\alpha$, $\gamma $ and of the measure $k_0$. {The assumptions on $g$ to obtain uniqueness are fairly general}, as stated in our first theorem.

\begin{theorem}[Uniqueness of a triplet solution to the inverse problem]
\label{S2T1}
For any  {nonnegative} function $g$ satisfying 
\begin{equation}
\label{S2Emom}
x^{k+1} g\in {W^{1,1}}(0, \infty), \quad x^k  g {\in L^1(0,\infty)}, \quad \forall k \ge 1,
\end{equation}
 there exists at most one triplet $(\gamma, \alpha, k_0)
\in \mathbb{R}^+ \times \mathbb{R}^+ \times \mathcal{M}(0,1)$ where  $k_0$ is a non negative measure satisfying \eqref{hyp2} and \eqref{hyp4}, such that $g$ is a solution of \eqref{eq:g} in the sense of distributions.
\label{thm_wp}
\end{theorem}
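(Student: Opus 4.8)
The plan is to exploit the functional equation \eqref{Formula} from Proposition~\ref{Prop2}, which ties together $\gamma$, $\alpha$ and $K_0$ through the analytic function $G$. Suppose two triplets $(\gamma_1,\alpha_1,k_0^{(1)})$ and $(\gamma_2,\alpha_2,k_0^{(2)})$ both make the \emph{same} $g$ a distributional solution of \eqref{eq:g}. Since $g$ does not depend on the triplet, neither does $G(s)$, which by \eqref{S2Emom} is analytic on $\{\Re e(s)\ge 1\}$ (actually, under the weaker moment hypotheses here, on $\{\Re e(s)>1\}$, which suffices). Writing \eqref{Formula} for each triplet gives
\begin{equation}
(2-s)G(s)=\alpha_i\gamma_i\bigl(K_0^{(i)}(s)-1\bigr)G(s+\gamma_i),\qquad i=1,2,\ \Re e(s)>1.
\end{equation}
The first step is to recover $\gamma$. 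Fix a real $s_0>1$ with $s_0\ne 2$; then $G(s_0)>0$ (as $g\ge 0$ is nontrivial and $x^{s_0-1}g\in L^1$), so $G(s_0+\gamma_i)\ne 0$ and we get $G(s_0+\gamma_1)/G(s_0+\gamma_2)=\alpha_2\gamma_2\bigl(K_0^{(2)}(s_0)-1\bigr)\big/\alpha_1\gamma_1\bigl(K_0^{(1)}(s_0)-1\bigr)$, a constant independent of $s_0$ along the real axis. Iterating the relation $G(s+\gamma_i)=G(s)/\Phi_i(s)$ and comparing growth/decay of $G$ along the real axis as $s\to+\infty$ — using that $G(s)=\int_0^1 x^{s-1}g + \int_1^\infty x^{s-1}g$ and the known tail behaviour of $g$ from \eqref{S1E10}–\eqref{S1E10B}, together with $(2-s)/(\alpha\gamma(K_0(s)-1))\sim s/(\alpha\gamma)$ since $K_0(s)\to 0$ — forces $\gamma_1=\gamma_2=:\gamma$. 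Concretely, the ratio $G(s+\gamma_1)/G(s+\gamma_2)$ being both a nonzero constant and, if $\gamma_1\ne\gamma_2$, a genuinely varying function of $s$ (by the iterated product formula) yields the contradiction.

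Once $\gamma_1=\gamma_2=\gamma$, subtracting the two versions of \eqref{Formula} gives $\alpha_1\gamma(K_0^{(1)}(s)-1)=\alpha_2\gamma(K_0^{(2)}(s)-1)$ for all $\Re e(s)>1$ where $G(s+\gamma)\ne 0$ — but $G(s+\gamma)>0$ for real $s>1$, hence the identity holds on a real interval and, by analyticity of both sides (Remark~\ref{S2RFi} and Proposition~\ref{Prop2}), on the whole strip. So $\alpha_1 K_0^{(1)}(s)-\alpha_1 = \alpha_2 K_0^{(2)}(s)-\alpha_2$. Next I recover $\alpha$: send $s\to+\infty$ along the reals; since $k_0^{(i)}$ is supported in $[0,1]$ and is a bounded continuous function near $0$ (Assumption~\ref{hyp4}), $K_0^{(i)}(s)=\int_0^1 y^{s-1}\,dk_0^{(i)}(y)\to 0$ as $s\to+\infty$ (the mass concentrates near $y=1$? no — near $y=0$ the integrand vanishes and near $y=1$ we need $s$ large kills $y^{s-1}$ only if... ) — more carefully, $K_0(s)\to 0$ because for any $\delta<1$, $\int_0^\delta y^{s-1}dk_0\to0$ and $\int_\delta^1 y^{s-1}dk_0\le k_0([\delta,1])$ which is small for $\delta$ near... this needs the normalization $\int z\,dk_0=1<\infty$; in fact $K_0(s)\le K_0(2)=1$ and $K_0(s)\downarrow$, with limit $k_0(\{1\})$. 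Using Assumption~\ref{hyp4}, $k_0$ has no atom at $1$ (it is a bounded \emph{function} near $1$), so $K_0^{(i)}(s)\to 0$, giving $-\alpha_1=-\alpha_2$, i.e. $\alpha_1=\alpha_2=:\alpha$. Then $K_0^{(1)}(s)=K_0^{(2)}(s)$ on the strip, and since the Mellin transform is injective on bounded measures on $(0,1)$ (two such measures with equal Mellin transform on a vertical line agree, e.g. by the change of variables $y=e^{-t}$ reducing to injectivity of the two-sided Laplace transform / Fourier transform of a finite measure), we conclude $k_0^{(1)}=k_0^{(2)}$.

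The main obstacle I expect is the first step, namely pinning down $\gamma$ from the equality of a single analytic function $G$ under two different shift amounts. Equality of $\alpha_i\gamma_i(K_0^{(i)}-1)$ with two \emph{a priori} different shifts does not immediately separate the contributions of the shift $\gamma_i$ from those of the unknown measures $K_0^{(i)}$, so one must genuinely use quantitative information about $G$ — its decay as $\Re e(s)\to+\infty$ (from $g\in L^1(x^k dx)$ for all $k$, which under \eqref{S2Emom} forces $G$ to decay faster than any exponential? no, $G(s)\ge\int_0^1 x^{s-1}g\,dx$ which can be as small as one likes) and, crucially, a lower bound / non-vanishing statement. Here the natural route is: from \eqref{Formula20}, $G(s+n\gamma_i)=G(s)\prod_{j=0}^{n-1}\Phi_i(s+j\gamma_i)^{-1}$, and $\Phi_i(s)\sim s/(\alpha_i\gamma_i)$ for large real $s$, so $G(s+n\gamma_i)\approx G(s)\prod_{j}\frac{\alpha_i\gamma_i}{s+j\gamma_i}$; matching the asymptotics of $\log G$ along the reals (a $-\frac{1}{\gamma_i}s\log s$-type leading term whose coefficient is exactly $-1/\gamma_i$) isolates $\gamma_i$ and forces $\gamma_1=\gamma_2$. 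Making this asymptotic matching rigorous — controlling the error terms in $\Phi_i$ and ruling out pathological oscillation of $G$ — is the technical heart of the argument; everything after the determination of $\gamma$ is, by comparison, routine.
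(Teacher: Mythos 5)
Your strategy coincides with the paper's: determine $\gamma$ from the behaviour of $G(s)$ as $s\to+\infty$ along the real axis, then determine $\alpha$ from $\lim (s-2)G(s)/G(s+\gamma)=\alpha\gamma$ using $K_0(s)\to 0$, then determine $k_0$ by injectivity of the Mellin transform of measures, reduced via $y\mapsto-\log y$ to a Laplace-transform uniqueness statement. Your steps for $\alpha$ and for $k_0$ are correct and essentially identical to Lemma~\ref{S3L901} and to the end of the proof in Section~\ref{Uniqueness_gamma}.

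The gap is the determination of $\gamma$, which is where almost all of the paper's effort goes and which you explicitly defer. Two concrete issues. First, your ``concrete'' shortcut is false: the ratio $G(s_0+\gamma_1)/G(s_0+\gamma_2)=\alpha_2\gamma_2\bigl(K_0^{(2)}(s_0)-1\bigr)/\bigl(\alpha_1\gamma_1(K_0^{(1)}(s_0)-1)\bigr)$ is \emph{not} constant in $s_0$, since the factors $K_0^{(i)}(s_0)$ vary with $s_0$; it is only asymptotically constant, so no contradiction follows from it. Second, the correct argument (Proposition~\ref{S3P3}) is the trichotomy \eqref{Cond_gamma}: for a fixed shift $R>0$, the quantity $sG(s)/G(s+R)$ tends to $0$, to $\alpha\gamma$, or to $\infty$ according as $R>\gamma$, $R=\gamma$, or $R<\gamma$; since the left-hand side depends only on $g$, this pins down $\gamma$. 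Establishing the cases $R\neq\gamma$ is exactly Lemma~\ref{sufficient_condition_gamma}: one compares $G$ with the explicit Gamma-type solution $\Gamma_{A,\gamma}$ of $AG(s+\gamma)=sG(s)$, controls the correction $C(s)=G(s)/\Gamma_{A,\gamma}(s)$ through the products $\prod_k\Psi(q+k\gamma)$, and this requires the refined expansion $K_0(s)=k_0(1)/s+o(1/s)$ of Lemma~\ref{meth_laplace} (not merely $K_0(s)\to 0$) to reach the two-sided bounds \eqref{S3bounds}. None of this is carried out in your proposal; ``ruling out pathological oscillation of $G$'' is precisely what those bounds accomplish. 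A further slip: the recursion reads $G(s+\gamma)=\Phi(s)G(s)$ with $\Phi(s)\sim s/(\alpha\gamma)$, so $G$ grows like $\Gamma(s/\gamma)$ and $\log G(s)\sim+\frac{s}{\gamma}\log s$; your iterated formula with $\Phi_i^{-1}$ and the announced $-\frac{1}{\gamma_i}\,s\log s$ leading term have the direction and sign reversed.
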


{
As said above, this is only a uniqueness result: the function g must satisfy many more conditions that are not completely general
to really be the long time asymptotics of the fragmentation equation, and then for the parameters $\gamma, \alpha,$ and the measure $k_0$ to exist. }

{Section~\ref{Uniqueness_gamma} is dedicated to the proof of {{Theorem \ref{thm_wp}}. Let us here briefly comment   the result.} The identity \eqref{asbe} {shows} that the profile $g$ and the parameter $\g$  describe the long time behaviour of the solution $f$ to \eqref{eq:frag_rewritten}.

According to Theorem~\ref{thm_wp}, if such $\g$ and $k_0$ exist they are  "encoded" in the equilibrium profile $g$.
\textcolor{black}{In other words, based { only} on the knowledge of the  asymptotic profile $g$, it is theoretically possible to {obtain}  information {on} the whole dynamics of the solution  $f$.}

 \textcolor{black}{The uniqueness of $\gamma$ is based on the characterisation given in Proposition~\ref{S3P3} of Section~\ref{Uniqueness_gamma}, which  uses the asymptotic behaviour of some  functional of $G(s)$ when $\Re e(s) \to \infty.$
 Equivalently, this is linked to the behaviour of $g$ for large $x,$ see also~\cite{balague:hal-00683148}. 
Since we cannot measure experimentally such a behaviour, th{e theoretical} characterisation {of Theorem \ref{thm_wp}}  
cannot be used in practice  to estimate its value.
To extract the values of $\g$ from real measurements, we would need  to use another characterization, based for instance on the knowledge of the {time evolution of the first moment of the size distribution}, given by~\eqref{asbe}, as described in Section~\ref{sec:asymp}. } 

Once the two parameters $\alpha $ and $\gamma $ are proved to be unique,  the uniqueness of the  measure $k_0$ is deduced from  general properties of the Mellin transform, and does not give any constructive method to calculate its values. The next subsection is thus dedicated to a constructive characterisation of $k_0$.

\subsection{Reconstruction of the fragmentation kernel}
Once $\alpha $ and $\gamma $ are known, we may apply Proposition~\ref{Prop2}, and formally dividing Equation~\eqref{Formula} by $G(s+\gamma)$ we obtain
\begin{equation}
\label{S2E101}
K_0(s)=1+\frac {(2-s)G(s)} {\alpha \gamma G(s+\gamma )}.
\end{equation}
 The properties  of the kernel $k_0$ are such that the inverse Mellin transform of  $K_0$ is well defined and  equal to $k_0$ {(for instance by Theorem~11.10.1 in~\cite{misra1986transform} and the proof of the uniqueness theorem below)}. Therefore, by  the equation \eqref{S2E101}, the kernel $k_0$ is given by the inverse Mellin transform of $1+\frac {(2-s)G(s)} {\alpha \gamma G(s+\gamma )}$.  Although in order to prove the uniqueness of $k_0$ it is sufficient to consider the  Mellin transforms $K_0(s)$ and $G(s)$ for real values of $s$, in order to take the inverse Mellin transform, it is necessary to use the values of  $1+\frac {(2-s)G(s)} {\alpha \gamma G(s+\gamma )}$ for  complex values of $s$. {Moreover, we have to ensure that the denominator $G(s+\gamma)$ does not vanish.}

Theorem~\ref{Well_posedness_inverse_pb} below provides an explicit formulation for $k_0$ in terms of the Mellin transform of $g$.
\begin{theorem}
\label{Well_posedness_inverse_pb}
\noindent
Suppose that  {$g$ satisfies~\eqref{S2Emom} and}  is the unique solution of equation \eqref{eq:g} for some given parameters $\alpha >0,$  $\gamma >0,$ and  $k_0$  a non negative measure, compactly supported in $[0, 1]$,  satisfying \eqref{hyp2} 
and \eqref{hyp4}. Let $G(s)$ be  the Mellin transform of the function $g$ as defined in  \eqref{S2EG1}.
Then, there exists $s_0>{2}$ such that 
\begin{eqnarray*}
&&(i)\quad |G(s)|\not =0,\,\,\,\forall s\in \C; \,\,\Re e (s)\in [s_0, s_0+\gamma ],\\
&&(ii) \quad K_0(s)=1+\frac {(2-s)G(s)} {\alpha \gamma G(s+\gamma )},\,\,\,\hbox{for}\,\,\,\Re e(s)=s_0\\
&&(iii) \quad k_0(x)=\frac {1} {2i\pi }\int\limits_{ \Re e(s)=s_0 }\!\!x^{-s}\left( 1+\frac {(2-s)G(s)} {\alpha \gamma G(s+\gamma )}\right)ds.
\end{eqnarray*}
\end{theorem}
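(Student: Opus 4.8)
The plan is to establish the three assertions in order, since $(ii)$ and $(iii)$ both rest on the non-vanishing statement $(i)$. First I would recall from Proposition~\ref{Prop2} and Remark~\ref{S2RFi} that, for $\Re e(s)>2$, one has $|K_0(s)|<1$ and hence $\Phi(s)=\frac{2-s}{\alpha\gamma(K_0(s)-1)}$ is analytic and nonvanishing there (indeed $|K_0(s)-1|\geq 1-|K_0(s)|>0$, and $2-s\neq 0$ when $\Re e(s)>2$). The functional equation \eqref{Formula20} then reads $G(s+\gamma)=\Phi(s)G(s)$ on $\Re e(s)>2$. The key structural observation is that this recursion propagates zeros: if $G(s_*)\neq 0$ for all $s_*$ in some strip $\Re e(s)\in[s_0,s_0+\gamma]$ with $s_0>2$, then applying the recursion forward shows $G$ never vanishes on $\Re e(s)\geq s_0$, since every point with large real part is reached from the base strip by adding a positive multiple of $\gamma$ and multiplying by nonzero factors $\Phi$. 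So it suffices to exhibit a single such strip, i.e.\ to rule out zeros of $G$ in a vertical band of width $\gamma$ sufficiently far to the right.

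For that, I would use the Cauchy integral / analyticity argument hinted at in the abstract. Since $x^kg\in L^1(0,\infty)$ for all $k\geq 0$ (from \eqref{S1E10}, \eqref{S2Emom}), $G(s)=\int_0^\infty x^{s-1}g(x)\,dx$ extends to an entire-in-each-vertical-band analytic function on $\Re e(s)\geq 1$, and we have the uniform bound $|G(s)|\leq \int_0^\infty x^{\Re e(s)-1}g(x)\,dx = G(\Re e(s))$ because $g\geq 0$; equality holds iff $s$ is real. The idea is to show that as $\sigma=\Re e(s)\to\infty$ the ``mass'' of $G(s)$ concentrates in a way that forces $G(\sigma+i\tau)$ to stay close to $G(\sigma)$, which is strictly positive. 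Concretely, one estimates, for $s=\sigma+i\tau$,
\begin{equation}
|G(\sigma+i\tau)-G(\sigma)| \le \int_0^\infty x^{\sigma-1}|x^{i\tau}-1|\,g(x)\,dx,
\end{equation}
and uses that for large $\sigma$ the measure $x^{\sigma-1}g(x)\,dx$ (suitably normalized) concentrates near its ``center'' — this is where Assumption~\ref{hyp4} enters, guaranteeing $g$ is continuous and behaves nicely near the relevant scales — so that $|x^{i\tau}-1|=|e^{i\tau\log x}-1|$ is small uniformly over the bulk of the mass. A cleaner route is to apply the Cauchy integral formula to $G$ on a small disk and its logarithmic derivative: writing $G'(s)/G(s)$ as $\int x^{s-1}(\log x)g\,dx / \int x^{s-1}g\,dx$, one recognizes this as the expectation of $\log x$ under the probability measure $x^{\sigma-1}g(x)\,dx/G(\sigma)$, whose variance can be shown to go to zero (or stay controlled) as $\sigma\to\infty$; a quantitative version then gives an explicit strip $\Re e(s)\in[s_0,s_0+\gamma]$ on which $\Re e(\log(G(s)/G(\sigma)))$ and the imaginary part are both small enough that $G(s)\neq 0$. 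I would pick $s_0$ large enough that this estimate holds throughout the band of width $\gamma$, which proves $(i)$.

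Given $(i)$, assertion $(ii)$ is immediate: on $\Re e(s)=s_0$ we have $G(s+\gamma)\neq 0$, so we may divide \eqref{Formula} by $\alpha\gamma G(s+\gamma)$ to obtain $K_0(s)=1+\frac{(2-s)G(s)}{\alpha\gamma G(s+\gamma)}$, valid first for $\Re e(s)=s_0$ and in fact throughout the strip by analytic continuation. For $(iii)$, I would invoke the Mellin inversion theorem (e.g.\ Theorem~11.10.1 in~\cite{misra1986transform}, as cited): the hypotheses \eqref{S1E2} together with Assumption~\ref{hyp4} — boundedness and continuity of $k_0$ near $0$ and near $1$, compact support in $[0,1]$ — ensure $K_0(s)$ is analytic for $\Re e(s)>0$, decays suitably along vertical lines, and that $k_0$ is recovered as $k_0(x)=\frac{1}{2i\pi}\int_{\Re e(s)=s_0}x^{-s}K_0(s)\,ds$ for any admissible $s_0$; substituting the formula from $(ii)$ gives exactly the stated integral representation. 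The main obstacle is clearly step $(i)$: controlling $|G(\sigma+i\tau)|$ from below uniformly over a full period-$\gamma$ band requires genuine decay/concentration estimates on the Mellin transform of the profile, and this is the delicate Cauchy-integral argument the introduction flags; the remaining steps $(ii)$–$(iii)$ are routine consequences of the functional equation and standard Mellin inversion once $(i)$ is in hand.
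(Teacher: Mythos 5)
Your handling of parts $(ii)$ and $(iii)$ matches the paper and is fine: once $(i)$ is known, one divides \eqref{Formula} by $G(s+\gamma)$ and applies Mellin inversion (Theorem~11.10.1 in~\cite{misra1986transform}, in the distributional sense since $K_0$ is only bounded). The problem is your argument for $(i)$, which is where all the difficulty lies, and it contains a genuine gap. You propose to show that $G(\sigma+i\tau)$ stays close to $G(\sigma)>0$ for $\sigma$ large, using concentration of the tilted measure $x^{\sigma-1}g(x)\,dx/G(\sigma)$ so that $|x^{i\tau}-1|$ is ``small uniformly over the bulk of the mass.'' This cannot work uniformly in $\tau$: writing $G(\sigma+i\tau)/G(\sigma)$ as the characteristic function of $\log x$ under that tilted (absolutely continuous) probability measure, the Riemann--Lebesgue lemma forces $G(\sigma+i\tau)\to 0$ as $|\tau|\to\infty$ on every vertical line, so $|G(\sigma+i\tau)-G(\sigma)|\to G(\sigma)\neq 0$. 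Concentration of $\log x$ (its variance is of order $1/(\gamma\sigma)$) only controls $|\tau|\lesssim \sqrt{\gamma\sigma}$; for larger $|\tau|$ the oscillation of $e^{i\tau\log x}$ sweeps through the bulk of the mass and your estimate gives nothing. Since $(i)$ requires non-vanishing on the \emph{entire} strip $\Re e(s)\in[s_0,s_0+\gamma]$, including arbitrarily large imaginary parts where $G$ is small, "closeness to $G(\sigma)$" is precisely the wrong tool: one must rule out zeros of a function that genuinely tends to zero along the line.

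The paper's route is entirely different and is what the phrase ``Cauchy integral'' in the abstract refers to. One constructs an \emph{explicit} solution $\widetilde G=e^{P}$ of the functional equation \eqref{Formula} by transporting it, via the conformal map $\zeta=e^{2i\pi(s-s_0)/\gamma}$, to a Carleman-type jump problem $P(r-i0)-P(r+i0)=\log\varphi(r)$ on the Riemann surface of the logarithm, solved by a Cauchy-type integral (Proposition~\ref{S4L40}, Lemma~\ref{S4L5}); being an exponential, $\widetilde G$ manifestly never vanishes. The real work is then to identify $\widetilde G$ with $G$: one shows the inverse Mellin transform $\tilde g$ of $\widetilde G$ lies in $L^1((x+x^{\gamma+1})dx)$ (via the asymptotics of $P$ along vertical lines, Lemma~\ref{S4L1}) and satisfies \eqref{eq:g}, and then invokes the uniqueness theorem for solutions of \eqref{eq:g} to conclude $\tilde g=g$, hence $G=\widetilde G\neq 0$ on the strip (Theorem~\ref{inv_mellin}). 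Note also that the functional equation alone does not pin down $G$ up to a constant --- the paper's example $\frac12\Gamma(s)$ versus $\frac12\Gamma(s)(1+\sin(2\pi s))$ shows a solution can acquire zeros --- which is why the identification step through the PDE uniqueness theorem is unavoidable; your proposal contains no substitute for it.
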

{\begin{remark}
\label{}
{Since under our assumptions the function $K_0 $ is only bounded, the integral in point (iii) is understood  in the  sense of distributions, as the second derivative of the inverse Mellin transform of $\f{K_0(s)}{s^2}$ (as it is done in theorem~11.10.1 in~\cite{misra1986transform}.)}
\end{remark}}

Section \ref{Reconstruction} is dedicated to the proof of Theorem~\ref{Well_posedness_inverse_pb}. 

\section{Proof of Theorem~\ref{thm_wp}: uniqueness}
\label{Uniqueness_gamma}
This section is dedicated to the proof of Theorem \ref{thm_wp}. In a first step we prove the uniqueness of  $(\alpha,\gamma)$, thanks to the characterisation of $\gamma$ given by Proposition~\ref{carac_gamma}. In a second step, we prove  the uniqueness of $k_0,$ using well-known properties of the Mellin transform.

In all this section, we assume that the assumption~\eqref{S2Emom} of Theorem~\ref{thm_wp} is satisfied. 
\subsection{Uniqueness of $(\alpha,\gamma).$}
The uniqueness of $(\alpha,\gamma)$  may be proved at least in two different ways, that use  the same property of the solution, namely the behaviour of its moments $G(s),\, s\in [2,+\infty)$, when $s\to\infty.$

The first way relies on the estimates obtained in~\cite{balague:hal-00683148}, Theorem~1.7, that states that there exists a constant $C>0$ and an exponent $p\geq 0$ such that
$$g(x)\sim C x^p e^{-\f{\alpha}{\gamma} x^\gamma},\,\,\hbox{as}\,\,\,x\to \infty$$
from where we deduce
$$\log \left(\f{1}{g}\right) \sim \f{\alpha}{\gamma} x^\gamma, \,\,\hbox{as}\,\,\,x\to \infty$$
which leads to the uniqueness of $(\alpha,\gamma).$

We give here a second  proof of the  uniqueness of $(\alpha,\gamma)$ {because our measure  $k_0$ does not satisfy the hypotheses imposed in ~\cite{balague:hal-00683148}}. It uses estimates on the Mellin transform of $g$,  instead of direct estimates on $g$ itself. However  both proofs strongly rely on the behaviour of high order moments of the function  $g$.

 \begin{proposition}[Necessary condition for $\g$] 
 \label{S3P3}Suppose that $g$ is a function satisfying  \eqref{S2Emom} and solves the  equation\eqref{eq:g} for some  parameters $\gamma >0$,  $\alpha>0$ and  some non negative measure $k_0$, compactly supported in $[0, 1]$,  satisfying \eqref{hyp2} and \eqref{hyp4}.  Let $G$ be the Mellin transform of $g$ for $\Re e(s)\geq 2$. Then:
\\

(i)  The value of the parameter $\alpha $ is uniquely determined by the value of $\gamma $ {(and the function $g$ itself)}.
\\
 
(ii)  Given any constant $R>0$:
\begin{equation}\label{Cond_gamma}
\lim\limits_{s \rightarrow \infty,\, s\in \R^+}   \dfrac{s\,G(s)  }{ G(s+R )} = 
\left\{
\begin{aligned}
&0,\quad\forall R>\gamma \\
&\alpha \gamma ,\,\,\,\hbox{if}\,\,\,R=\gamma \\
&\infty,\quad \forall R\in (0, \gamma )
\end{aligned}
  \right.
 \end{equation}
\label{carac_gamma}
\end{proposition}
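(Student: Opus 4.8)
The plan is to work directly from the functional equation \eqref{Formula} of Proposition~\ref{Prop2}, rewritten as
\begin{equation*}
\frac{s\,G(s)}{G(s+\gamma)} = \frac{s}{s-2}\,\alpha\gamma\,(1-K_0(s)),
\end{equation*}
valid for $\Re e(s)>1$ (with the harmless sign adjustment at $s=2$). Since $\supp(k_0)\subset[0,1]$ and $\int_0^1 z\,dk_0(z)=1$ with $k_0$ a bounded measure, one has $K_0(s)\to 0$ as $s\to+\infty$ along the reals (by dominated convergence, $x^{s-1}\to 0$ pointwise on $(0,1)$ and the mass near $x=1$ is handled by Assumption~\ref{hyp4}, which gives $k_0$ bounded continuous on $[1-\eps,1]$, so $\int_{1-\eps}^1 x^{s-1}\,dk_0 \le \|k_0\|_\infty/s$). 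Also $s/(s-2)\to 1$. Hence $\frac{s\,G(s)}{G(s+\gamma)}\to \alpha\gamma$, which is exactly the $R=\gamma$ line of \eqref{Cond_gamma}; this simultaneously proves (i), since $\alpha\gamma$ is then determined by $g$ and $\gamma$ (and $\gamma$ itself will be pinned down by the trichotomy).

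For general $R>0$ I would telescope. Writing $R = n\gamma + r$ with $n = \lfloor R/\gamma\rfloor \ge 0$ and $r\in[0,\gamma)$, iterate \eqref{Formula20} to get
\begin{equation*}
\frac{G(s+R)}{G(s)} = \frac{G(s+r)}{G(s)}\cdot\prod_{j=0}^{n-1}\Phi(s+r+j\gamma),
\end{equation*}
and recall $\Phi(s)=\dfrac{2-s}{\alpha\gamma(K_0(s)-1)}$, so that each factor $\Phi(s+r+j\gamma)\sim \dfrac{s}{\alpha\gamma}$ as $s\to\infty$ (using again $K_0\to 0$). Thus $\prod_{j=0}^{n-1}\Phi(s+r+j\gamma)$ grows like $(s/(\alpha\gamma))^n$. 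For the leftover ratio $G(s+r)/G(s)$ with $0\le r<\gamma$: when $r=0$ it is $1$; when $0<r<\gamma$ I claim $G(s+r)/G(s)\to\infty$, because $G(s+r)/G(s) = G(s+\gamma)/G(s) \cdot G(s+r)/G(s+\gamma)$ and the first factor $\to 0$ like $\alpha\gamma/s$ while... this needs care, so instead I would argue directly: by log-convexity of $s\mapsto\log G(s)$ (Hölder / the fact that $G$ is a moment sequence, $G(s)=\int x^{s-1}g$), the ratio $G(s+r)/G(s)$ is monotone in $s$ and, combined with $G(s+\gamma)/G(s)\sim \alpha\gamma/s\to 0$ together with the intermediate-value behaviour of $r\mapsto \log G(s+r)$, forces $G(s+r)/G(s)\to\infty$ for every $r<\gamma$ as $s\to\infty$. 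Putting the pieces together: for $R=n\gamma$ ($n\ge 2$) the quantity $sG(s)/G(s+R)\sim s\cdot(\alpha\gamma/s)^n = (\alpha\gamma)^n s^{1-n}\to 0$; for $R=n\gamma+r$ with $r>0$ it is $\le$ (something $\to 0$) times $1/(G(s+r)/G(s))\to 0$, so the limit is $0$ for all $R>\gamma$; for $R\in(0,\gamma)$ we get $sG(s)/G(s+R) = s\cdot G(s)/G(s+R)$ and $G(s)/G(s+R)\to 0$ but is beaten by the factor $s$ — more precisely $G(s+R)/G(s)\le G(s+\gamma)/G(s)\cdot[G(s)/G(s+\gamma-R)]^{?}$; cleaner is: $sG(s)/G(s+R)\ge sG(s)/G(s+\gamma)\to\infty$ since $G$ is nondecreasing in $s$ eventually (all moments positive) so $G(s+R)\le G(s+\gamma)$, giving the $+\infty$ case. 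The $R=\gamma$ case was already done.

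The main obstacle I anticipate is the middle regime $0<r<\gamma$ in the leftover ratio: controlling $G(s+r)/G(s)$ from \emph{below} (to get the $\infty$ for $R<\gamma$) and from \emph{above} is not immediate from \eqref{Formula} alone, since $\gamma$-shifts are what the functional equation natively controls, not arbitrary shifts. The right tool is the log-convexity (Cauchy--Schwarz) inequality for moments, $G(s_1)G(s_2)\ge G(\tfrac{s_1+s_2}{2})^2$, or more generally the Lyapunov/Hölder inequality, which makes $s\mapsto \log G(s)$ convex; combined with the known $\gamma$-asymptotics $\log G(s+\gamma)-\log G(s)\to -\infty$ (in fact $\sim \log(\alpha\gamma/s)$), convexity pins the increments on sub-$\gamma$ scales. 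Once that monotonicity/convexity bookkeeping is in place, assembling the three cases of \eqref{Cond_gamma} and deducing (i) is routine. Finally, the trichotomy in (ii) identifies $\gamma$ uniquely as the unique $R$ for which the limit is finite and nonzero, and then (i) gives $\alpha$, completing the uniqueness of $(\alpha,\gamma)$.
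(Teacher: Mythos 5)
Your treatment of the case $R=\gamma$ and of point (i) is correct and coincides with the paper's Lemma~\ref{S3L901}: divide \eqref{Formula} by $G(s+\gamma)$, use $K_0(s)\to 0$ as $s\to\infty$ (which does follow from Assumption~\eqref{hyp4} as you indicate), and read off the limit $\alpha\gamma$. For $R\neq\gamma$ your strategy --- telescope the $\gamma$-shifts through \eqref{Formula20} and control the leftover sub-$\gamma$ increment by log-convexity of $G$ --- is genuinely different from the paper, which instead compares $G$ with an explicit solution $\Gamma_{A,\gamma}$ of the model equation $AG(s+\gamma)=sG(s)$ and estimates the correction factor $C(s)=G(s)/\Gamma_{A,\gamma}(s)$ through an infinite product and digamma asymptotics. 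Your route, if completed, would be shorter and would only need $K_0(s)\to0$ rather than the rate $K_0(s)=k_0(1)/s+o(1/s)$ used in the paper's product estimates.

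However, as written the proof has a genuine gap, exactly at the point you flag as the main obstacle. First, the ``cleaner'' argument you settle on for $R\in(0,\gamma)$ is wrong: granting that $G$ is eventually nondecreasing so that $G(s+R)\le G(s+\gamma)$, you obtain $sG(s)/G(s+R)\ge sG(s)/G(s+\gamma)$, but the right-hand side tends to $\alpha\gamma$, a \emph{finite} limit, so this only yields $\liminf\ge\alpha\gamma$, not $\infty$. (Relatedly, your claim that ``$G(s+\gamma)/G(s)\to 0$ like $\alpha\gamma/s$'' is inverted: $G(s+\gamma)/G(s)=\Phi(s)\sim s/(\alpha\gamma)\to\infty$.) Second, the log-convexity control of $G(s+r)/G(s)$ for $0<r<\gamma$ is asserted but never carried out, and ``monotone in $s$'' is not enough. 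It can be carried out: with $h=\log G$ convex (H\"older) and $h(s+\gamma)-h(s)=\log\Phi(s)=\log s+\mathcal O(1)$, the three-slopes inequality gives, for $0<r<\gamma$,
\begin{equation*}
\frac{r}{\gamma}\bigl(h(s)-h(s-\gamma)\bigr)\;\le\; h(s+r)-h(s)\;\le\;\frac{r}{\gamma}\bigl(h(s+\gamma)-h(s)\bigr),
\end{equation*}
hence $G(s+r)/G(s)=s^{r/\gamma}e^{\mathcal O(1)}$. Combining this with the exact product $\prod_{j=0}^{n-1}\Phi(s+r+j\gamma)=s^{n}e^{\mathcal O(1)}$ over the $n=\lfloor R/\gamma\rfloor$ full $\gamma$-steps gives $G(s+R)/G(s)=s^{R/\gamma}e^{\mathcal O(1)}$, and all three cases of \eqref{Cond_gamma} follow by comparing $R/\gamma$ with $1$. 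Until an estimate of this kind is in place, the cases $R\neq\gamma$ are not proved.
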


\begin{remark}
In order to take the limits in  \eqref{Cond_gamma} we need  $G(s)$ to be defined for all $s$ real  large enough. We  need  then the function $g$ to satisfy condition \eqref{S2Emom}.
\end{remark}

The proof of Proposition  \ref{S3P3} follows  immediately from the two following lemmas.

\begin{lemma}
\label{S3L901}
Under the same hypothesis as in Proposition \ref{S3P3}, the value of the parameter $\alpha $ is uniquely determined by the value of $\gamma $ and
$$
\lim\limits_{s \rightarrow \infty,\, s\in \R}   \dfrac{sG(s)  }{ G(s+\gamma  )} =\alpha \gamma. 
$$
\end{lemma}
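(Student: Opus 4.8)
\textbf{Plan for the proof of Lemma~\ref{S3L901}.}
The statement we need is the ratio limit $\lim_{s\to\infty, s\in\R} \frac{sG(s)}{G(s+\gamma)} = \alpha\gamma$, together with the fact that $\alpha$ is then pinned down by $\gamma$ and $g$. The natural starting point is the functional equation~\eqref{Formula} from Proposition~\ref{Prop2}, which reads $(2-s)G(s) = \alpha\gamma(K_0(s)-1)G(s+\gamma)$ for $\Re e(s)>1$. Rewriting this for real $s>1$ gives
\begin{equation*}
\frac{sG(s)}{G(s+\gamma)} = \alpha\gamma\,(1-K_0(s)) + \frac{2\,G(s)}{G(s+\gamma)}.
\end{equation*}
So the whole lemma reduces to two limit computations as $s\to+\infty$: first, $K_0(s)\to 0$; second, $\frac{G(s)}{G(s+\gamma)}\to 0$ (which, once we know the first limit, is equivalent to $\frac{sG(s)}{G(s+\gamma)}$ having a finite limit, but it is cleaner to establish it directly).

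\textbf{Step 1: $K_0(s)\to 0$ as $s\to+\infty$.} Since $k_0$ is a bounded nonnegative measure supported in $[0,1]$ with $\int_0^1 dk_0<\infty$, we have $K_0(s)=\int_0^1 z^{s-1}\,dk_0(z)$, and $z^{s-1}\to 0$ pointwise on $(0,1)$ with $z^{s-1}\le 1$; dominated convergence gives $K_0(s)\to k_0(\{1\})$. Now Assumption~\ref{hyp4} forces $k_0$ to be a bounded continuous function near $1$, hence $k_0(\{1\})=0$, so $K_0(s)\to 0$. (This is exactly where \ref{hyp4} — or the weaker~\eqref{hyp:DG} — is used.)

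\textbf{Step 2: $\frac{G(s)}{G(s+\gamma)}\to 0$ as $s\to+\infty$.} This is the part I expect to be the main obstacle, since it concerns the growth of moments of $g$ and cannot be read off directly from~\eqref{Formula} without circularity. Here $G(s)=\int_0^\infty x^{s-1}g(x)\,dx$ is a moment sequence of the nonnegative function $g\in L^1(x^k\,dx)$ for all $k$. The cleanest route is: (a) note that $\log G$ is convex in $s$ on $[1,\infty)$ (Hölder / Cauchy–Schwarz on moments), so $s\mapsto \frac{G(s+\gamma)}{G(s)}$ is nondecreasing; hence the ratio $G(s)/G(s+\gamma)$ has a limit $\ell\in[0,\infty)$. (b) Rule out $\ell>0$: if $\ell>0$ then $G(s+\gamma)\le C G(s)$ for large $s$, which by iterating along an arithmetic progression $s_0+n\gamma$ forces at most geometric growth $G(s_0+n\gamma)\le C^n G(s_0)$; but $g$ has a density that, by~\eqref{hyp3}, is bounded below by a positive constant on an interval $[\eta_1,\eta_2]\subset(0,1)$ is not enough — instead one uses that $g$ is genuinely spread out: for \emph{any} $A>1$, $G(s)\ge \int_A^\infty x^{s-1}g(x)\,dx \ge A^{s-1}\int_A^\infty g$, and since $g>0$ has mass at arbitrarily large $x$ (e.g. because $\int x^k g<\infty$ for all $k$ but $g\not\equiv 0$ on $(A,\infty)$ for every $A$ — this needs a short argument from the equation~\eqref{eq:g}, or one may invoke the known tail behaviour $g(x)\sim Cx^pe^{-(\alpha/\gamma)x^\gamma}$), we get $\liminf_s G(s)^{1/s}\ge A$ for every $A$, i.e. $G(s)^{1/s}\to\infty$; combined with $\log$-convexity this yields $\frac{G(s+\gamma)}{G(s)}\to\infty$, i.e. $\ell=0$. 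Alternatively, and more self-containedly, Laplace-type asymptotics applied to $G(s)=\int_0^\infty e^{(s-1)\log x}g(x)\,dx$ using the known Gaussian-type tail of $g$ give $G(s+\gamma)/G(s)\to\infty$ directly.

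\textbf{Conclusion.} Combining Steps 1 and 2 with the displayed identity gives $\frac{sG(s)}{G(s+\gamma)}\to \alpha\gamma\cdot 1 + 0 = \alpha\gamma$. For the uniqueness claim: since the left-hand side depends only on $g$ and on $\gamma$, and equals $\alpha\gamma$, the value $\alpha = \frac{1}{\gamma}\lim_{s\to\infty}\frac{sG(s)}{G(s+\gamma)}$ is determined by $g$ and $\gamma$ alone. The honest difficulty is Step 2; everything else is a short deduction from the functional equation and the support/regularity hypotheses on $k_0$. If one is willing to cite~\cite{balague:hal-00683148} for the pointwise tail of $g$, Step 2 becomes routine, but the paper seems to want to avoid that dependency, so I would present the log-convexity argument as the primary line and use~\eqref{hyp3} (positivity of $g$ on a fixed interval, propagated) to guarantee $G(s)^{1/s}\to\infty$.
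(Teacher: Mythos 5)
Your Step 1 and your final deduction are fine, and the uniqueness-of-$\alpha$ claim is handled correctly (it follows from the exact identity at any fixed $s>2$, which is also what the paper does). The problem is that you manufactured a difficulty in Step 2 that does not exist, and then gave an incomplete argument for it.

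You correctly wrote the functional equation~\eqref{Formula} in the form $(s-2)G(s)=\alpha\gamma(1-K_0(s))G(s+\gamma)$ and rearranged it into
\[
\frac{sG(s)}{G(s+\gamma)}=\alpha\gamma\,(1-K_0(s))+\frac{2\,G(s)}{G(s+\gamma)}.
\]
But the very same identity, solved for the ratio you worry about, gives
\[
\frac{G(s)}{G(s+\gamma)}=\frac{\alpha\gamma\,(1-K_0(s))}{s-2},
\]
which is valid for all real $s>2$ and tends to $0$ as soon as $K_0(s)$ is bounded (and you already proved $K_0(s)\to 0$). There is no circularity: the functional equation is an \emph{exact} pointwise relation, not an asymptotic one, so it controls $G(s)/G(s+\gamma)$ outright, and the entire content of the lemma is Step 1 plus a one-line algebraic rearrangement. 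This is exactly what the paper's proof does: it derives the constant identity $\frac{(2-s)G(s)}{\gamma G(s+\gamma)(K_0(s)-1)}=\alpha$ for $s>2$, notes that Lemma~\ref{meth_laplace} gives $K_0(s)\to 0$, and concludes $\frac{(s-2)G(s)}{G(s+\gamma)}\to\alpha\gamma$ (from which $\frac{sG(s)}{G(s+\gamma)}\to\alpha\gamma$ since $s/(s-2)\to 1$). Your log-convexity argument is therefore unnecessary; moreover, as you present it, step (b) is not a proof but a sketch with two alternative and incomplete routes (a vague positivity argument, or an appeal to the tail asymptotic of~\cite{balague:hal-00683148} that the paper explicitly chooses to avoid because the hypotheses on $k_0$ there are not satisfied). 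So the net effect of Step 2 is to replace a trivial consequence of the equation with a non-rigorous and off-spec detour. The fix is simply to delete Step 2 and observe that $G(s)/G(s+\gamma)=O(1/s)$ by the equation itself.
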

\begin{proof}[Of Lemma \ref{S3L901}.] {As seen in} Proposition \ref{Prop2}, the function $G$  is analytic in the domain $$\left\{s, \Re e(s)\geq 2\right\}.$$ Since $g(x)\ge 0$, it follows that $G(s)>0$ for all $s\ge 2$. By condition \ref{hyp2}  on the kernel $k_0$, for all $s>2$, $K_0(s)<K_0(2)=1$. It is then possible to divide both terms of equation \eqref{Formula} by $\gamma G(s+\gamma )(K_0(s)-1)$ to obtain
\begin{equation}
\label{S3Ealpha}
\frac {(2-s)G(s)} {\gamma G(s+\gamma )(K_0(s)-1)}=\alpha  ,\,\,\,\forall s>2.
\end{equation}
Choosing  any $s>2$, this equation uniquely  determines the value of $\alpha $ in terms of the value of $\gamma $.

Lemma~\ref{meth_laplace} in Appendix~\ref{app:K0} proves
  \begin{equation}
  \label{S3E67}
  \lim\limits_{s \rightarrow \infty} K_0(s) = 0. 
 \end{equation}
Combining   \eqref{Formula}, \eqref{S3Ealpha} and \eqref{S3E67}  leads to:
 \begin{equation}
  \lim\limits_{s \rightarrow \infty}   \dfrac{(s-2)G(s) }{ G(s+\g)} = \alpha \gamma, 
 \end{equation}
which ends the proof of~Lemma \ref{S3L901}.
\end{proof}

 \begin{lemma}
 \label{sufficient_condition_gamma}Under the same hypothesis as in Proposition \ref{S3P3},
 \begin{equation}\label{Cond_gamma0}
\lim\limits_{s \rightarrow \infty,\, s\in \R}   \dfrac{sG(s)  }{ G(s+R )} = 
\left\{
\begin{aligned}
&0,&\qquad\forall R>\gamma, \\
&\infty,&\qquad \forall R\in (0, \gamma ).
\end{aligned}
  \right.
 \end{equation}
 \end{lemma}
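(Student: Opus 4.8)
The plan is to derive both limits from Lemma~\ref{S3L901} combined with the log-convexity of the moment function $G$. Set $\psi(s):=\log G(s)$. By \eqref{S2Emom} we have $x^{s-1}g\in L^1(0,\infty)$ for every $s\ge 2$, and since $g\ge 0$ is not identically zero, $G(s)>0$, so $\psi$ is well defined on $[2,\infty)$. Hölder's inequality applied to $G\big((1-\lambda)s_1+\lambda s_2\big)=\int_0^\infty (x^{s_1-1})^{1-\lambda}(x^{s_2-1})^{\lambda}g(x)\,dx$ shows that $\psi$ is convex. Moreover Lemma~\ref{S3L901} can be rewritten as $G(s+\gamma)/G(s)=\tfrac{s}{\alpha\gamma}\,(1+o(1))$ as $s\to+\infty$, hence
\begin{equation}
\psi(s+\gamma)-\psi(s)=\log s + O(1),\qquad s\to+\infty. \label{eq:psigamma}
\end{equation}

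Next I would use that, for any fixed base point $s$, the secant slope $h\mapsto \big(\psi(s+h)-\psi(s)\big)/h$ is non-decreasing on $(0,\infty)$, an elementary consequence of convexity. Comparing $h=R$ with $h=\gamma$ gives, for every $s\ge 2$,
\begin{equation}
\psi(s+R)-\psi(s)\ \le\ \frac{R}{\gamma}\,\big(\psi(s+\gamma)-\psi(s)\big)\quad\text{if }0<R<\gamma,\label{eq:secant1}
\end{equation}
and the reverse inequality if $R>\gamma$. Inserting \eqref{eq:psigamma} then yields $\psi(s+R)-\psi(s)\le \tfrac{R}{\gamma}\log s+O(1)$ in the case $R<\gamma$, and $\psi(s+R)-\psi(s)\ge \tfrac{R}{\gamma}\log s+O(1)$ in the case $R>\gamma$.

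Finally, since $\log\!\big(sG(s)/G(s+R)\big)=\log s-\big(\psi(s+R)-\psi(s)\big)$, the case $R<\gamma$ gives $\log\!\big(sG(s)/G(s+R)\big)\ge \big(1-\tfrac{R}{\gamma}\big)\log s+O(1)\to+\infty$, so the ratio tends to $+\infty$; the case $R>\gamma$ gives $\log\!\big(sG(s)/G(s+R)\big)\le\big(1-\tfrac{R}{\gamma}\big)\log s+O(1)\to-\infty$, so the ratio tends to $0$. This is exactly \eqref{Cond_gamma0}.

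The one point deserving care is that we need the \emph{sharp} leading order $\log s$ in \eqref{eq:psigamma}: merely knowing $\psi(s+\gamma)-\psi(s)\to\infty$ would not suffice, because the numerator carries an extra factor $s=e^{\log s}$ that must be beaten precisely by the $\tfrac{R}{\gamma}$ versus $1$ comparison. That sharp estimate is, however, exactly the content of Lemma~\ref{S3L901}. Note also that $R$ need not be an integer multiple of $\gamma$, which is why the convexity/secant comparison is used rather than iterating the functional equation \eqref{Formula}; beyond this, the argument is routine bookkeeping.
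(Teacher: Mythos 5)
Your proof is correct, and it takes a genuinely different --- and considerably shorter --- route than the paper's. The paper normalizes $G$ by the explicit solution $\Gamma_{A,\g}$ of the model recursion $A\,G(s+\g)=sG(s)$, writes the quotient $C(s)=G(s)/\Gamma_{A,\g}(s)$ as a product of factors $\Psi$ along arithmetic progressions of step $\g$, controls that product through digamma asymptotics, and finally invokes Stirling's formula; the outcome is the two-sided power bound \eqref{S3bounds}, from which the two limits follow by taking $\varepsilon$ small. You instead observe that $\psi=\log G$ is convex on $[2,\infty)$ (Lyapunov/H\"older, using only $g\ge 0$), that Lemma~\ref{S3L901} pins down the increment of $\psi$ over a step $\g$ as $\log s+\mathcal O(1)$, and that monotonicity of secant slopes converts this into an upper bound on the increment over a step $R<\g$ and a lower bound over a step $R>\g$ --- precisely the one-sided estimates needed in each regime. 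Both arguments ultimately rest on the same input (the sharp $R=\g$ asymptotics of Lemma~\ref{S3L901}, itself a consequence of $K_0(s)\to 0$), but yours replaces the product/Stirling machinery by a two-line convexity inequality and even yields the clean exponent $1-R/\g$ without the $\varepsilon$-loss appearing in \eqref{S3bounds}; the only extra information the paper's computation provides is the two-sided bound \eqref{S3bounds}, which is not used elsewhere. One small point worth making explicit: $0<G(s)<\infty$ for all real $s\ge 2$ (hence also at $s+R$ for $R>\g$) follows from \eqref{S2Emom} together with $g\ge 0$, $g\not\equiv 0$, so $\psi$ is defined and convex wherever you use the secant comparison.
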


 \begin{proof}
We first obtain some information on  the asymptotic behaviour of  $G(s)$ for $s\in \R$ and  $s\to \infty$.

\noindent{\bf First Step. On the asymptotic behaviour of $G$ as $s\to \infty$, $s\in \R$.} The idea is to use that  for $s>2$, we have $K_0(s)<1$ and  equation \eqref{Formula} is then equivalent to \eqref{Formula20}, \eqref{Formula22}. Since the function $\Phi $ satisfies
$$
\lim _{ s\to \infty } \frac {A \Phi (s)} {s}=1,\,\,\,A=\alpha \gamma,
$$
the equation  \eqref{Formula} may be considered as being close,  for $s$ large,  to the following equation:  
\begin{equation}
\label{S3E72}
  A G(s+\g) = s G(s), \qquad { s\geq 2}, \qquad G(2) = \rho.
\end{equation}
This is a small variation of the functional equation that defines the Gamma function, where $A=\gamma =1$. {It follows   by Wielandt's theorem (cf. \cite{Rem})} that it has a unique analytical solution $\Gamma_{A,\g}$, given, for all $s>2,$ by
 \begin{equation}\label{Gamma_a,g}
 \Gamma_{A,\g}(s) = c\left(\dfrac{\g}{A}\right)^{\frac{s}{\g}}\Gamma\left( \dfrac{s}{\g}\right),
 \end{equation}
where $\Gamma $ is the Gamma function and  $c$ is a constant uniquely determined by the condition $G(2) = \rho$. 
The  asymptotic behaviour of  $\Gamma_{A,\g}(s)$ as $s\to \infty$ is obtained using Stirling's formula:
\begin{eqnarray}
 \Gamma_{A,\g}(s) &\sim &c\sqrt{\frac {2\pi \gamma } {s} }\left(\dfrac{\g}{A}\right)^{\frac{s}{\g}} 
 \left( \frac {s} {e\gamma }\right)^{\frac {s} {\gamma }}\left( 1+\mathcal O\left(\frac {1} {s}\right)\right), \,\,s\to \infty \nonumber \\
&=& c\sqrt{2\pi \gamma }\,s^{-\frac {1} {2}}e^{\frac {s} {\gamma }(\log s -1 -\log A) }\left( 1+\mathcal O\left(\frac {1} {s}\right)\right).
\label{S3E541}
\end{eqnarray}
We define now:
\begin{equation}\label{defC}
 C(s) = \dfrac{G(s)}{\Gamma_{A,\g}(s)},
 \end{equation}
Since $G$ satisfies  \eqref{Formula20} and  $\Gamma_{A,\g}$ solves \eqref{Gamma_a,g} we deduce that for all  $s\ge 2$:
\begin{eqnarray}
&&C(s+ \g) =  \frac {G(s+\gamma) } {\Gamma_{A,\g}(s+\gamma )}=\frac {A\Phi (s) G(s)} {s\Gamma_{A,\g}(s)}\nonumber\\
&&\hskip 1.55cm = \Psi (s) C(s)\label{S3EqC}\\
&&\hbox{where}:\,\,\ \Psi (s)=\frac {A\Phi (s)} {s},\,\,\,
\Phi (s)=\frac {s-2} {A(1-K_0(s))}.\nonumber
\end{eqnarray}
Lemma \ref{meth_laplace} (see Appendix~\ref{app:K0}) proves that  $K_0(s)=\frac {k_0(1)} {s}+o\left( \frac {1} {s}\right)$ as $s\to \infty$ 
and then,
\begin{eqnarray}
\Psi (s)&=&\frac {s-2} {s(1-K_0(s))}=1+\frac {k_0(1)-2} {s}+\theta (s),\nonumber \\
 \theta (s)&\in & {\cal C}([2, \infty)),\quad \theta(s)=o\left(\frac {1} {s} \right),\quad s\to \infty.\label{feC}
\end{eqnarray}

We deduce from \eqref{S3EqC}\ that, for all $q \ge 2$ fixed and all  $N \in \mathbb{N}^*$:
\begin{equation*}
\begin{aligned}
 C\left(q + N\g\right) &= C\left(q+ (N-1)\g\right) \Psi\left(q+ (N-1)\g\right) \\
 &= C\left(q+ (N-2)\g\right)  \Psi\left(q+ (N-2)\g\right) \Psi\left(q+ (N-1)\g\right) \\
& = \dots \dots \dots \dots\\
&= C\left(q\right)  \Psi \left(q\right)\Psi \left(q+ \g\right)  \dots 
\Psi\left(q+ (N-2)\g\right) \Psi\left(q+ (N-1)\g\right),
 \end{aligned}
\end{equation*}
and then
\begin{equation}
\label{S3E120}
 C\left(q + N\g\right) =C(q) \displaystyle\prod_{k=0}^{N-1} \Psi\left(q + k \g \right).
\end{equation}

Given now any $s>2$, there exists a unique real number $\rho _s\in (0, \gamma )$ and a unique integer $k_s$ such that 
\begin{equation}
\label{S3Erhos}
s=2+\rho _s+\gamma k_s.
\end{equation}
 We may write
\begin{equation}
\label{S3E121}
 C(s)=C(2+\rho _s+\gamma k_s)=C(2+\rho _s)\prod_{k=0}^{k_s-1} \Psi\left(2+\rho _s+\gamma k\right).
\end{equation}
Similarly, for any $s>2$ and $\tilde \gamma >0$, there exists a unique real number $\tilde \rho _s\in (0, \gamma )$ and a unique integer $\tilde k_s$ such that 
\begin{equation}
\label{S3Erhotildes}
s+\tilde \gamma =2+\tilde \rho _s+\gamma \tilde k_s
\end{equation}
 and
\begin{equation}
\label{S3E122}
 C(s+\tilde  \gamma )=C(2+\tilde\rho _s+ \gamma \tilde k_s)=C(2+\tilde\rho _s)\prod_{k=0}^{\tilde k_s-1} \Psi\left(2+\tilde\rho _s+ \gamma  k\right).
\end{equation}
We wish  to estimate now the products in the right hand side terms of \eqref{S3E121} and  \eqref{S3E122}. To this end  we notice that:
\begin{eqnarray*}
&&\log \left(\prod_{k=0}^{N-1} \Psi\left(q + k \g \right)\right)=\sum_{ k=0 } ^{N-1}\log\left(\Psi\left(q + k \g \right) \right)
=\sum_{ k=0 } ^{N-1}\log\left(1+\frac {k_0(1)-2} {q+k\gamma }+\theta (q+k\gamma) \right)\\
&&=
\sum_{ k=0 } ^{N-1}\left(\frac {k_0(1)-2} {q+k\gamma } \right)+\sum_{ k=0 } ^{N-1}\left( 
\log\left(1+\frac {k_0(1)-2} {q+k\gamma }+\theta (q+k\gamma) \right)-\left(\frac {{k_0(1)-2}} {q+k\gamma } \right)\right)\\
&&= {\frac {(k_0(1)-2)}{\gamma} \sum\limits_{k=0}^{N-1} \f{1}{\f{q}{\gamma} +k}+W_q(N)}.
\end{eqnarray*}
{We estimate the first term thanks to the asymptotic properties of the digamma function, that we denote $\psi$. Since $\psi(z+1)=\psi(z)+\f{1}{z}$ we get, a}s $N\to \infty$:
\begin{eqnarray*}
{ \sum\limits_{k=0}^{N-1} \f{1}{\f{q}{\gamma} +k}=\psi(N+\f{q}{\gamma}) }-\psi(\f{q}{\gamma})=
 \log (N)+\omega_q (N),
\end{eqnarray*}
where  $\omega_q (s)$ is a continuous function on $r>0$ such that, for any $R>2$ there exists $C_R>0$ for which:
$$
|\omega _q(s)|\le C_R,\,\,\,\forall s>2,\,\,\,\forall q\in  (2, R).
$$
{We now estimate the term $W_q(N)$}. For all $\varepsilon >0$ and all $R>2$, there exists $M_\varepsilon >0$ such that for all $q\in (2, R)$:
\begin{eqnarray}
|W_q(N)|&\le& \sum_{ k=0 } ^{M_\varepsilon }\left| 
\log\left(1+\frac {k_0(1)-2} {q+k\gamma }+\theta (q+k\gamma) \right)-\left(\frac {{k_0(1)-2}} {q+k\gamma } \right)\right|
+\varepsilon \sum_{ k=M_\varepsilon  } ^{N-1}\frac {|k_0(1)-2|} {q+k\gamma } \nonumber \\
&\le & C _{ M_\varepsilon  }(q)+\varepsilon \sum_{ k=0} ^{N-1}\frac {|k_0(1)-2|} {q+k\gamma }\nonumber \\
&=&C_{M_\varepsilon }(q)+\varepsilon  \frac {|k_0(1)-2|} {\gamma }\left(\psi \left({N}+\frac {q} {\gamma } \right)-\psi \left(\frac {q} {\gamma } \right)\right)\nonumber \\
&\le &C_{M_\varepsilon }(q)+\varepsilon \frac {|k_0(1)-2|} {\gamma }\left(\log (N)+\omega _q(N)\right).
\label{S3EW}
\end{eqnarray}
We deduce that 
\begin{eqnarray*}
&&\prod_{k=0}^{N-1} \Psi\left(q + k \g \right){\leq} N^{\frac { k_0(1)-2} {\gamma }}e^{\Omega (q, N)},\\
&&\Omega (q, N)=\frac {k_0(1)-2} {\gamma }\omega _q(N)+W{_q}(N).
\end{eqnarray*}
Then by \eqref{S3EW}, for all $\varepsilon >0$ and  for any $R>2$, there exists two positive constants $C _{ 1, R }$ and $C _{ 2, R }$ such that for all $q\in (2, R)$ and all $N$ sufficiently large:
\begin{eqnarray}
\label{S3E421}
C _{ 1, R }\, N^{\frac {(k_0(1)-2)-\varepsilon |k_0(1)-2| } {\gamma }}\le \prod_{k=0}^{N-1} \Psi\left(q + k \g \right)\le C _{ 2, R }\, N^{\frac {(k_0(1)-2)+\varepsilon |k_0(1)-2| } {\gamma }}.
\end{eqnarray}

Consider now any $\tilde \gamma >0$. By definition of the function $C$(s) we have:
\begin{equation*}
\dfrac{s G(s)}{G(s+ \tilde{\g})}  = 
\dfrac{s\Gamma_{A,\g}(s) C(s)}{\Gamma_{A,\g}(s+\tilde{\g})C(s+ \tilde{\g})}. 
 \end{equation*}
Since for all $s\ge 2$ and $\tilde \gamma >0$ the real numbers $\rho _s$ and $\tilde \rho _s$ defined in \eqref{S3Erhos} and \eqref{S3Erhotildes}  are in $(0, \gamma )$, we have by  \eqref{S3E421}:
\begin{equation*}
\dfrac{C(s)}{C(s+ \tilde{\g})} =  \frac {C(2+\rho _s)} {C(2+\tilde \rho _s)}
\frac {\prod_{k=0}^{k_s-1} \Psi\left(2+\rho _s+\gamma k\right)} {\prod_{k=0}^{\tilde k_s-1} \Psi\left(2+\tilde \rho _s+\gamma  k\right)}\\
\le C \frac {C(2+\rho _s)} {C(2+\tilde \rho _s)}k_s^{\frac {(k_0(1)-2)+\varepsilon |k_0(1)-2| } {\gamma }}
 {\tilde k_s}^{\frac {-(k_0(1)-2)+\varepsilon |k_0(1)-2| } {\gamma }}.
 \end{equation*}
for some constant $C>0$. By definition $k_s< s/\gamma $ and if $s\to \infty$, since $\tilde \rho _s\in (0, \gamma )$ it is easy to check that $\tilde k_s >(s+\tilde \gamma -2-\gamma )/\gamma >s/2\gamma $ for $s$ large enough. On the other hand, notice that for all $s>2$ and $\tilde \gamma >0$, we have by definition $2+\rho _s\in (2, 2+\gamma )$ and  $2+\tilde \rho _s\in (2, 2+\gamma )$. Since the function $C$ is continuous on $[2, \infty)$ and strictly positive, there exists a positive constant $C$ such that, for all $s>2$ and $\tilde \gamma >0$:
$$
 \frac {C(2+\rho _s)} {C(2+\tilde \rho _s)}\le C.
$$
It then follows that for some constant $C>0$:
 \begin{equation}
 \label{S3E97}
\dfrac{C(s)}{C(s+ \tilde{\g})} 
\le Cs^{\frac {2\varepsilon |k_0(1)-2| } {\gamma }},\,\,\,\forall s>2.
 \end{equation}
A similar argument shows that for some constant $C'>0$:
 \begin{equation}
\label {S3E98}
\dfrac{C(s)}{C(s+ \tilde{\g})} 
\ge C's^{\frac {-2\varepsilon |k_0(1)-2| } {\gamma }},\,\,\,\forall s>2.
 \end{equation}
We deduce from \eqref{S3E541}:
\begin{eqnarray}
\label{S3E896}
\dfrac{s\Gamma_{A,\g}(s)}{\Gamma_{A,\g}(s+\tilde{\g}))} &=&
\frac {s^{{-\frac {1} {2}}}s e^{\frac {s} {\gamma }(\log s -1 -\log A) }}
 {(s+\tilde \gamma )^{{-\frac {1} {2}}} e^{\frac {s+\tilde \gamma } {\gamma }(\log (s+\tilde \gamma ) -1 -\log A) }}\left(1+\mathcal O\left( \frac {1} {s}\right) \right)\,\,\hbox{as}\,\,s\to \infty.
 \end{eqnarray}
Simple calculus gives:
{
\begin{eqnarray}
&&-\f{1}{2}\log(s+\tilde \gamma) +\f{s+\tilde \gamma}{\gamma}(\log(s+\tilde \gamma) -1 - \log A)=\nonumber \\
&&\hskip 1cm =-\f{1}{2}\log s+\mathcal O (\f{1}{s})  +\f{s}{\gamma}(1+\f{\tilde \gamma}{s})(\log s+\f{\tilde \gamma}{s}+\mathcal O(\f{1}{s^2}) -1 - \log A)\nonumber\\
&&\hskip 1cm=-\f{1}{2}\log s+ \f{s}{\gamma}(\log s+\f{\tilde \gamma}{s} -1 - \log A) +\f{\tilde \gamma}{\gamma}(\log s -1 - \log A) +\mathcal O\left(\f{1}{s}\right)\nonumber
\,\,\hbox{as}\,\,s\to \infty
\end {eqnarray}
and
\begin{eqnarray}
\f{1}{2}\log(s)+\f{s}{\gamma}(\log s - 1 -\log A) &=&\f{1}{2}\log(s)+\f{s}{\gamma}(\log s - 1 -\log A)\,\,\,\,\hbox{as}\,\,s\to \infty, \nonumber
 \end{eqnarray}
from where
\begin{eqnarray*}
\frac {s^{-\frac {1} {2}}s e^{\frac {s} {\gamma }(\log s -1 -\log A) }}
{(s+\tilde \gamma )^{-\frac {1} {2}} e^{\frac {s+\tilde \gamma } {\gamma }(\log (s+\tilde \gamma ) -1 -\log A) }}
&=&\f{s e^{\mathcal O (\f{1}{s})}}{e^{ \f{\tilde \gamma}{\gamma}+\mathcal O(\f{1}{s} ) +\f{\tilde \gamma}{\gamma}(\log s -1 - \log A)}} \nonumber \\
&=& {s e^{\mathcal O (\f{1}{s}) - \f{\tilde \gamma}{\gamma}  -\f{\tilde \gamma}{\gamma}(\log s -1 - \log A)}},\,\,\, s\to \infty 
\end{eqnarray*}
and:
 \begin{eqnarray}
 \dfrac{s\Gamma_{A,\g}(s)}{\Gamma_{A,\g}(s+\tilde{\g}))} &=&{A^{\frac {\tilde \gamma } {\gamma } }}
  s^{1-\frac {\tilde \gamma } {\gamma }} \left(1+\mathcal O\left( \frac {1} {s}\right) \right)\,\,\hbox{as}\,\,s\to \infty.  \label{S3E897}
 \end{eqnarray}
 }
From \eqref{defC}, \eqref{S3E97}, \eqref{S3E98} 
and \eqref{S3E897} we deduce that for any $\tilde \gamma >0$ and any $\varepsilon >0$ small,   there exists two positive constants $C_1$ and $C_2$ such that  for all $s$  sufficiently large:
\begin{equation}
\label{S3bounds}
C_1 s^{1-\frac {\tilde \gamma } {\gamma }-\frac {2\varepsilon |k_0(1)-2| } {\gamma }}\le \frac {sG(s)} {G(s+\tilde \gamma )} \le C_2 s^{1-\frac {\tilde \gamma } {\gamma }+\frac {2\varepsilon |k_0(1)-2| } {\gamma }}.
\end{equation}

\noindent{\bf Second Step. End of the proof of Lemma \ref{sufficient_condition_gamma}.}

We may conclude now  the proof of Lemma \ref{sufficient_condition_gamma}  using the estimates \eqref{S3bounds} as follows. 
Suppose  that $\tilde \gamma >\gamma $. Then, we choose $\varepsilon >0$ in \eqref{S3bounds}  small enough in order to have:
$$
1-\frac {\tilde \gamma } {\gamma }+\frac {2\varepsilon |k_0(1)-2| } {\gamma }<0.
$$
It follows from the upper estimate in \eqref{S3bounds} that
$$
\lim _{ s\to \infty }\frac {sG(s)} {G(s+\tilde \gamma )}=0.
$$
If,  on the other hand   $\tilde \gamma <\gamma $ we choose  $\varepsilon >0$ \eqref{S3bounds} small enough in order to have
$$
1-\frac {\tilde \gamma } {\gamma }-\frac {2\varepsilon |k_0(1)-2| } {\gamma }>0.
$$
As a consequence of the lower estimate in  \eqref{S3bounds} we deduce that 
$$
\lim _{ s\to \infty }\frac {sG(s)} {G(s+\tilde \gamma )}=\infty.
$$
This concludes the proof of  \eqref{Cond_gamma0} and of Lemma \ref{sufficient_condition_gamma}.
\end{proof}

\begin{proof} [\textbf{Of Proposition \ref{S3P3}}] Property (i) and property (ii) for $R=\gamma $ follow  \ref{S3L901}. Property (ii) for $R>0$, $R\not =\gamma $ follows from Lemma \ref{sufficient_condition_gamma}.
\end{proof} 
 
\begin{proof} \textbf {[Of Theorem \ref{S2T1}]}
From the formula~\eqref{Formula}, and since by definition $G(s+\gamma)$ is strictly positive for $s\in {[1}-\gamma,+\infty)$ we can divide by $G(s+\gamma)$ and obtain formula~\eqref{S2E101}, {\it i.e.}
$$K_0(s):=1+\f{G(s)(2-s)}{\alpha \gamma  G(s+\gamma)},\qquad 1\leq s <+\infty.$$
This determines uniquely the Mellin transform $K_0(s)$  of the measure $k_0$ for all $s\ge 1$. We want to use now a uniqueness theorem for the Laplace transforms of measures.
The Laplace transform of a measure $\mu $ is defined as
$$
\mathcal L(\mu )(s )=\int _0^\infty e^{-sy}d\mu (y).
$$
\textcolor{black}{We claim now that}
$K_0(s)$ is the Laplace transform of the non negative measure $d\mu_0(y)$ defined on $(0, \infty)$  by
\textcolor{black}{\begin{equation}\label{pushforward}
d \mu_0(x) = e^{2x} \; T \# \;  xdk_0(x), \qquad \text{with } T(x)= -\log(x).
\end{equation} 
We recall that the pushforward measure $\mu$ of a measure $\nu$ by the function $T$, that we denote $\mu= T \# \nu,$ is such that for all $\psi \in L^1(d\mu)$, we have $\dst\int (\psi \circ T) d \nu= \dst\int \psi d \mu$
.
Indeed, using \eqref{pushforward}, 
\begin{eqnarray}
K_0(s)&=&\int _0^1 x^{s-1}dk_0(x)=\int _0^1 x^{s-2}xdk_0(x)= \int _0^1  e^{(s-2)\log x}xdk_0(x) \nonumber\\
&=&\int _0^\infty e^{-(s-2)x}\; T\# x dk_0(x) = \int _0^\infty e^{-sx}\;d\mu_0(x) \nonumber\\
&=& \mathcal L(\mu_0 )(s ).
\end{eqnarray}
}

Suppose now that  $k_0^{(1)}$ and $k_0^{(2)}$ are two measures, satisfying \eqref{hyp2} and ~\eqref{hyp4}, such that the same function $g$, satisfying \eqref{S2Emom}  solves the equation  \eqref{eq:g}  for $k_0^{(1)}$ and $k_0^{(2)}$. Then, by Proposition \ref{Prop2} their  Mellin transforms,  
$K_0^{(1)}$ and $K_0^{(2)}$ respectively, satisfy \eqref{S2E101} and then $K_0^{(1)}(s)=K_0^{(2)}(s)$ for all $s\ge 1$. The Laplace transforms of the corresponding non negative measures $\mu_0^{(1)}$ and $\mu_0^{(2)}$ are then equal for all  $s\ge 1$. It follows that  $\mu_0^{(1)}=\mu_0^{(2)}$ (cf. Theorem 8.4. in~\cite{bhattacharya}),  and then $k_0^{(1)}=k_0^{(2)}$.
\end{proof}

\begin{remark}
The method used to prove the uniqueness of the triplet is based on the fact that under hypotheses on $k_0$, 
 the Mellin Transform $K_0(s)$ for $s \in \mathbb{R}$ goes to 0 as $s$ goes to $+\infty$ with a convergence rate of at least $1/s$. {This is a point which could be relaxed,  by assuming{~\eqref{hyp:DG} with a power $\nu>-1$} as in~\cite{balague:hal-00683148}: the convergence could then be slowlier {and Lemma~\ref{meth_laplace} needs to be adapted,  but it }would still be sufficient.}\\\\
\end{remark}
 \section{Reconstruction of $k_0$}
 \label{Reconstruction}
This section is dedicated to the proof of Theorem \ref{Well_posedness_inverse_pb}. To reconstruct the kernel $k_0$ from the function $g$ and the parameters $\alpha$ and $\gamma,$ we want to do an inverse Mellin transform of the functional of $G(s)$ given in~\eqref{S2E101}. This requires to integrate this functional on a line $s_0 +i\R$ with $s_0>2,$ see Formula $(iii)$ in Theorem~\ref{Well_posedness_inverse_pb}. Since it is necessary to divide by the function  $G(s+\gamma )$, we have to prove that $G(s+\gamma)$ does not vanish on the line $s_0+\gamma +i\R$. We already know that this is true  for real values $s\ge  1$, but is not known for $s\in \C$. 

To do so, {\color{black}Proposition}~\ref{S4L40} defines an explicit expression for a solution $\tilde G(s)$ to~\eqref{Formula}, from  which it is easy to deduce that $\tilde G$ does not vanish. This  expression~\eqref{G:candidate} is obtained using {the Cauchy integral}, see Lemma~\ref{S4L5}. However, since the equation~\eqref{Formula} may admit several solutions, we need to prove that $\tilde G(s)$ given by~\eqref{G:candidate} is equal to the Mellin transform $G(s)$ of the solution $g$ to~\eqref{eq:g}. This is done by studying the inverse Mellin transform of $\tilde G$ (Lemma~\ref{S4L1}) and then using a uniqueness result of solutions to the equation \eqref{eq:g} (Theorem~\ref{inv_mellin}).

As a first, the following lemma obtains a solution of  problem~\eqref{Formula}.
\begin{prop}
\label{S4L40}[Solution  to Problem~\eqref{Formula}.]
{Let $k_0$ satisfy the hypotheses~\eqref{hyp2},~\eqref{hyp4}.} For any $s_0>2$ and $\varepsilon >0$ such that $s_0-\frac {\gamma \varepsilon } {\pi }>2$, define the complex valued function $\tilde{G}$ for $\Re e(s) \in (s_0,s_0+\gamma)$ as 
\begin{eqnarray}
\label{G:candidate}
\widetilde{G}(s) = \exp \Bigg(- \dfrac{1}{\gamma }\dst\int_{\Re e(\sigma) =s_0}
\log\big(\Phi( \sigma)\big)\Bigg\{ \dfrac{1}{1-e^{2 i \pi \frac{s-\sigma }{\gamma}}}  -\dfrac{1}{2+e^{2 i \pi \frac{s_0-\sigma}{\gamma}}}\Bigg\}\;     d\sigma  \Bigg),\; 
\end{eqnarray}
where $\log(z)$ denotes  the following determination of the logarithm:
\begin{equation}
\label{S4Elog}
\log (z)=\log|z|+i \arg z, \,\,\, \arg z\in [0, 2\pi )
\end{equation}
and the function $\Phi (s)$ is defined by \eqref{Formula22} in Remark \ref{S2RFi}.
{\color{black} Then, the function $\widetilde{G}$ may be extended analytically to $\C$ by the formula
\begin{equation}\label{def_G_prolong}
\widetilde{G}(s) = \exp \Bigg(- \dfrac{1}{\gamma }\dst\int_{\Re e(\sigma) =s_0}
\log\big(\Phi( \sigma)\big)\Bigg\{ \dfrac{1}{1-e^{2 i \pi \frac{s-\sigma }{\gamma}}}  -\dfrac{1}{2+e^{2 i \pi \frac{s_0-\sigma}{\gamma}}}\Bigg\}\;     d\sigma  \Bigg)e^{\f{k}{2}\log \left(\Phi(s_0 +i\Im m(s))\right)},
\end{equation}
with $\Re e(s) \in (s_0+k\gamma,s_0+(k+1)\gamma)$ for $k\in \Z.$
Moreover, $\widetilde G$ solves ~\eqref{Formula} in $\C.$ 
}
 \end{prop}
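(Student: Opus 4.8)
The plan is to verify directly that the formula~\eqref{G:candidate} defines an analytic function on the strip $\Re e(s)\in(s_0,s_0+\gamma)$ which satisfies the multiplicative functional equation~\eqref{Formula20}, and then to use this functional equation itself as the extension mechanism. First I would check that the integrand in~\eqref{G:candidate} is well defined: the kernel
$$\frac{1}{1-e^{2i\pi\frac{s-\sigma}{\gamma}}}-\frac{1}{2+e^{2i\pi\frac{s_0-\sigma}{\gamma}}}$$
has no pole on the contour $\Re e(\sigma)=s_0$ precisely when $\Re e(s)\neq s_0$, and the subtracted term is there to guarantee decay in $\Im m(\sigma)$ so that the integral against $\log\Phi(\sigma)$ converges — here one uses the growth of $\Phi$ along vertical lines coming from $\Phi(\sigma)=\frac{2-\sigma}{\alpha\gamma(K_0(\sigma)-1)}$ together with Lemma~\ref{meth_laplace} controlling $K_0$, and the condition $s_0-\frac{\gamma\varepsilon}{\pi}>2$ ensuring one stays in the region where $K_0\neq 1$. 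Analyticity in $s$ on the open strip follows by differentiating under the integral sign, the contour being fixed and the $s$-dependence entering only through $e^{2i\pi(s-\sigma)/\gamma}$.

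Next I would establish the key functional identity. The heart of the matter is the behaviour of the kernel under $s\mapsto s+\gamma$: since $e^{2i\pi\frac{(s+\gamma)-\sigma}{\gamma}}=e^{2i\pi\frac{s-\sigma}{\gamma}}$, the kernel is formally $\gamma$-periodic, but the contour $\Re e(\sigma)=s_0$ now separates the two points $\Re e(s)$ and $\Re e(s)+\gamma$ differently, so shifting $s$ by $\gamma$ picks up a residue. Concretely I expect
$$-\frac{1}{\gamma}\int_{\Re e(\sigma)=s_0}\log\Phi(\sigma)\Big\{\text{kernel}(s+\gamma,\sigma)-\text{kernel}(s,\sigma)\Big\}\,d\sigma = \log\Phi(s),$$
the right-hand side arising as $2i\pi$ times the residue of $\frac{1}{\gamma}\log\Phi(\sigma)\cdot\frac{1}{1-e^{2i\pi(s-\sigma)/\gamma}}$ at $\sigma=s$ (the simple pole contributes $\frac{1}{\gamma}\cdot\frac{\gamma}{2i\pi}\log\Phi(s)\cdot 2i\pi = \log\Phi(s)$ up to sign/orientation bookkeeping), the subtracted $s_0$-term cancelling in the difference. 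Exponentiating gives $\widetilde G(s+\gamma)=\Phi(s)\widetilde G(s)$, which is exactly~\eqref{Formula20}, hence~\eqref{Formula} on the strip. This contour-shift-with-residue computation, and in particular pinning down the correct determination of $\log\Phi$ so that the residue calculation is consistent with the choice~\eqref{S4Elog} and no spurious branch jumps occur, is what I expect to be the main obstacle.

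Finally, for the analytic extension to all of $\C$: the functional equation $\widetilde G(s+\gamma)=\Phi(s)\widetilde G(s)$ together with the fact that $\Phi$ is analytic and nonvanishing on $\Re e(s)>2$ (Remark~\ref{S2RFi}, since $|K_0(s)|<1$ there) lets me propagate $\widetilde G$ from the initial strip to every strip $\Re e(s)\in(s_0+k\gamma,s_0+(k+1)\gamma)$, $k\geq 0$, by repeated multiplication; going backwards ($k<0$) one divides by $\Phi$, which is legitimate as long as one stays in $\Re e(s)>2$ — which is why the hypothesis $s_0-\frac{\gamma\varepsilon}{\pi}>2$ (and more generally the constraint keeping things to the right of the line $\Re e=2$) matters. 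Iterating the relation $k$ times from the base strip produces exactly the extra factor $e^{\frac{k}{2}\log\Phi(s_0+i\Im m(s))}$ appearing in~\eqref{def_G_prolong} — here one must check that the "reference" evaluation is taken at $s_0+i\Im m(s)$ consistently and that the powers of $\Phi$ telescope correctly with the strip index; I would verify the $k=1$ case against the identity just proved and then induct. Continuity across the strip boundaries $\Re e(s)=s_0+k\gamma$ follows because the two one-sided definitions agree there by the functional equation, so by Morera/removable-singularity the glued function is entire in $\C$ (minus possible poles of $\Phi$, which lie on $\Re e(s)\leq 2$ and are handled by the meromorphic continuation statement), and it solves~\eqref{Formula} globally by construction.
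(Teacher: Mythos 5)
Your first step (convergence of the integral thanks to the subtracted term, analyticity in $s$ by differentiation under the integral sign) is sound and matches what the paper needs. The fatal problem is your second step. The kernel is \emph{exactly} $\gamma$-periodic in $s$: since $e^{2i\pi\frac{(s+\gamma)-\sigma}{\gamma}}=e^{2i\pi\frac{s-\sigma}{\gamma}}$, you have $\mathrm{kernel}(s+\gamma,\sigma)-\mathrm{kernel}(s,\sigma)\equiv 0$, so the left-hand side of your displayed ``key functional identity'' is identically zero and cannot equal $\log\Phi(s)$. There is no residue to collect as long as both $s$ and $s+\gamma$ are inserted into the same integral over the fixed contour $\Re e(\sigma)=s_0$: formula \eqref{G:candidate}, taken at face value for all $s$, defines a $\gamma$-periodic function, which is precisely why the statement restricts it to a single strip of width exactly $\gamma$ (so that $s$ and $s+\gamma$ never both lie in it) and why the continuation \eqref{def_G_prolong} must carry the extra, non-periodic factor $e^{\frac{k}{2}\log\Phi(s_0+i\Im m(s))}$. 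The entire content of the proposition lives in the discrepancy between the periodic integral and its genuine analytic continuation across the lines $\Re e(s)=s_0+k\gamma$. A residue argument can be made to work, but it has to be a contour deformation (move $\Re e(\sigma)=s_0$ to $\Re e(\sigma)=s_0+\gamma$ and pick up the pole of $\frac{1}{1-e^{2i\pi(s-\sigma)/\gamma}}$ at $\sigma=s$), not a difference of kernels on a fixed contour. The paper instead changes variables $\zeta=e^{2i\pi(s-s_0)/\gamma}$, so the strip becomes a sheet of the Riemann surface of the logarithm and the functional equation becomes the multiplicative jump condition $F(r-i0)=\varphi(r)F(r+i0)$ across $\R^+$; the Cauchy integral \eqref{Form_PS} solves the additive version of this jump problem via the Sokhotski--Plemelj formulas (Lemma \ref{S4L5}), and the half-jumps $\pm\frac12\log\varphi$ on either side of the cut are exactly what produce the factor $\frac{k}{2}\log\varphi(|\zeta|)$ in \eqref{analytic_cont_P}.

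Your third step then cannot stand on its own: extending ``by the functional equation'' presupposes the functional equation on the base strip, which you have not established, so the argument is circular. Moreover, iterating $\widetilde G(s+\gamma)=\Phi(s)\widetilde G(s)$ $k$ times yields a product $\prod_{j=1}^{k}\Phi(s-j\gamma)$ of values of $\Phi$ at $k$ distinct points, and it is not at all automatic that this coincides with the single factor $e^{\frac{k}{2}\log\Phi(s_0+i\Im m(s))}$ appearing in \eqref{def_G_prolong}; reconciling the two requires precisely the Plemelj boundary-value structure that your argument never introduces. So the proposal has a genuine gap at its central step.
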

{\color{black} Formula~\eqref{def_G_prolong} is only valid for $\Re e(s) \notin s_0+ \gamma \Z$, but the proof below shows that it can be continuously extended to $\C$.}

\begin{proof} 
We first obtain an explicit solution in terms of a new variable $\zeta $ defined as follows:
 \begin{equation}
 \label{changevarbles}
 \zeta = T(s)=e^{2 i \pi \frac{s-s_0}{\gamma}}.
 \end{equation}
The conformal mapping  $T$ transforms the complex plane $\mathbb{C}$ into the Riemann surface denoted $S$, associated to the logarithmic function. Every point $\zeta$ of each sheet of this surface is characterized uniquely  by its modulus $|\zeta |$ and the  determination of its argument $\theta$,   with $\theta \in (2k\pi, 2(k+1)\pi)$ for the $k$th sheet. 
  The function $\widetilde \varphi $ defined as
\begin{equation}
\label{S4Etildephi}
\forall \zeta \in S:\,\,\, \widetilde \varphi(\zeta )= \widetilde \varphi(T(s) )=\Phi (s)
\end{equation}
 is meromorphic in $S$.
 Any  vertical infinite strip in $\mathbb{C}$ of the form {$\Re e  (\frac {s-s_0} {\gamma } )\in (-\f{\eps}{2\pi}, 1+\f{\eps}{2\pi})$}  for some ${\eps>0}$ ,  is transformed by $T$ into a portion $ {D(\eps)}$  of $S $ defined by 
\begin{equation}\label{def:Deps}
D{(\eps)}=\left\{\zeta \in S ; \quad \zeta =re^{i\theta}, \quad \theta \in {(-\eps,2\pi +\eps )},\quad {r>0} \right\}.
\end{equation}
In particular, the map $T$ is a bijection between the strip $\{z \in \mathbb{C}\; |\;s_0  < {\Re}e(z) < s_0 + \gamma\} $ and its image $T\bigl(\{z \in \mathbb{C}\; |\;s_0  < \Re e(z) < s_0 + \gamma\} \bigr)$ where: 
 $$T\left( \{z \in \mathbb{C}\; |\;s_0  < \Re e(z) < s_0 + \gamma\}\right)=\left\{\zeta \in \C;\quad \zeta =|\zeta |e^{i\theta}, \,\,\,\theta\in {(}0, 2\pi )  \right\}$$   
(see Figure \ref{WH}). The inverse of $T$ on  $T\big(\{z \in \mathbb{C}\; |\;s_0  < \Re e(z) < s_0 + \gamma\} \big)$ is then  given by:
$$
T^{-1}(\zeta )=s_0+\frac {\gamma } {2i\pi }\log (\zeta ).
$$
Notice also that:
  \begin{equation}
  \left\{
  \begin{aligned}
  & \hbox{if}\quad\Re e(s) \rightarrow (s_0)^+\,\,&\hbox{then}\,\, \arg(\zeta) \rightarrow 0^+,\\
    & \hbox{if} \quad \Re e(s) \rightarrow (s_0+ \gamma)^- \,\,&\hbox{then}\,\,  \arg(\zeta) \rightarrow (2\pi)^-,\\
  \end{aligned}
  \right.
 \end{equation}
where the function $\arg(\cdot)$ is determined as  in \eqref{S4Elog}, i.e.  $\arg(z)\in [0, 2\pi )$. \\
We look  for a solution $\widetilde G$ of \eqref{Formula} of the form:
\begin{eqnarray}
\label{CV}
&&\widetilde G(s)=F({T}(s)).
 \end{eqnarray}
If $\widetilde G$ has to be  analytic in { $\Re e  (s)\in (s_0-\f{\eps\gamma}{2\pi}, s_0+\gamma+\f{\eps\gamma}{2\pi})$} and must satisfy  the equation ~\eqref{Formula} in that strip, then the function $F $ should be analytic on $D{(\eps)}$ and satisfy:
\begin{equation}
\label{eq:fonc_mult}
F(r - i 0 )  =  \varphi(r) F(r+i0), \qquad \forall r >0,
\end{equation}

\begin{figure}[htbp]
 \begin{center}
  \includegraphics[width=0.8\textwidth]{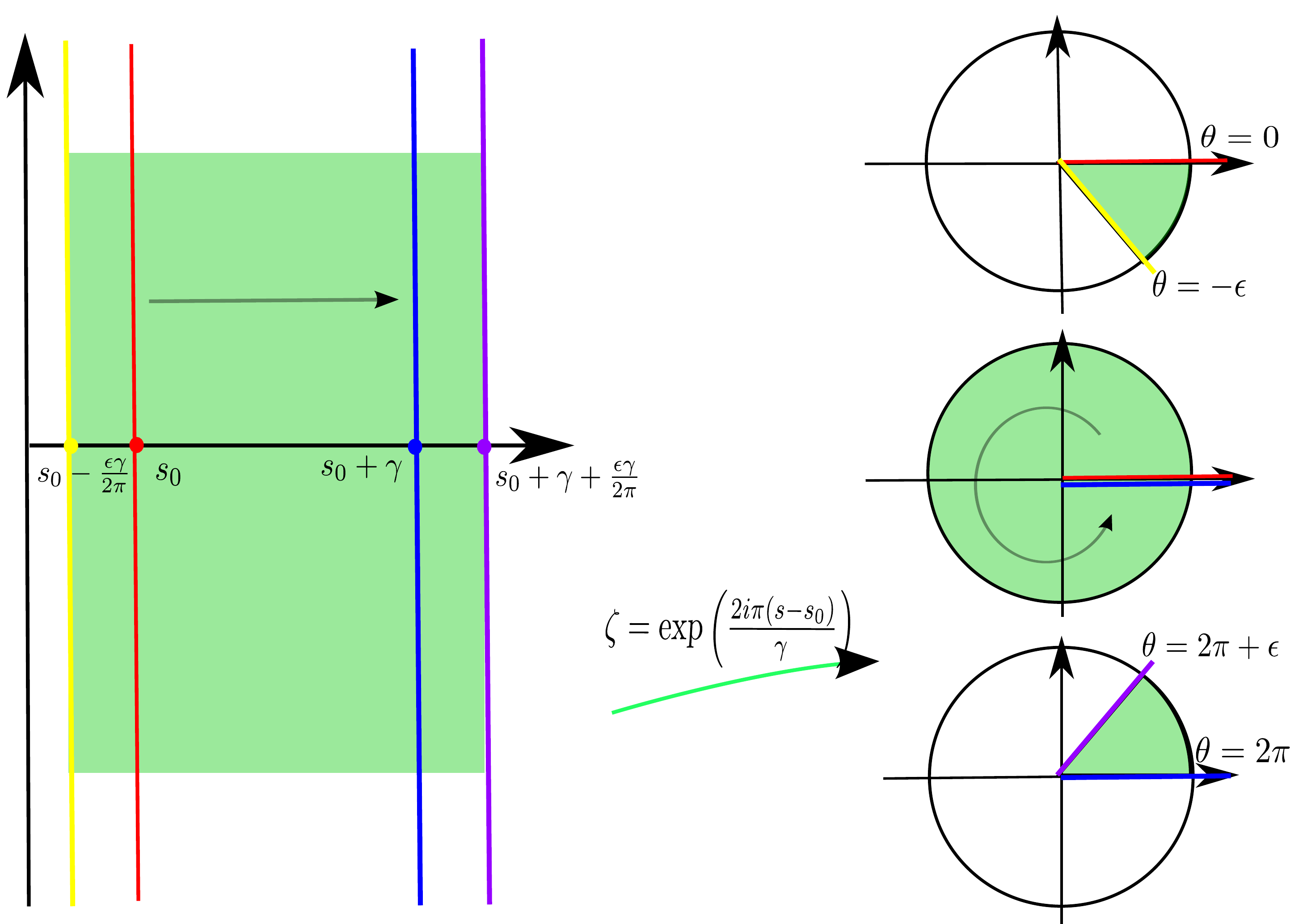}
  \caption{\label{WH}Holomorphic change of coordinates sends the green strip $s_0 - \dfrac{\eps\g}{2\pi} < \Re e(s) < s_0 + \g + \dfrac{\eps\g}{2\pi}$  (Left)
  into  the green subdomain $D(\eps)$ (Right) of the Riemann surface associated to the logarithmic function.}
  \end{center}
 \end{figure}

where we define, for all $r>0$:
\begin{equation}\label{def:Fi0}
\begin{aligned}
 F(r+ i0 ) :=   \lim\limits_{\eta \rightarrow 0^+ } F(r e^{i\eta}), \quad
  F(r - i0 ) :=   \lim\limits_{\eta \rightarrow 0^+ } F(r e^{i(2\pi-\eta)}), \quad
  {\varphi(r):= \lim\limits_{\eta\to 0^+} \tilde{\varphi}(re^{i\eta})}.
  \end{aligned}
\end{equation}

In order to make appear the multiplicative functional equation \eqref{eq:fonc_mult} as a typical Carleman equation \cite{Zakharov98} (additive), we  look for  the function $F(\zeta )$ of the form 
 \begin{equation}
F(\zeta)=e^{P(\zeta )}
 \end{equation}
 where the function $P(\zeta )$  is such that:
 \begin{equation}\label{eq:fonc_add}
P(r-i0) =\log(\varphi(r)) + P(r+i0), \qquad  \forall r>0.
 \end{equation}

 The existence of such a function $P$ with the suitable properties is proved in the following  lemma.
 \begin{lemma} 
 \label{S4L5}{Let $k_0$ satisfy the hypotheses~\eqref{hyp2},~\eqref{hyp4}.} The function  $ P(\zeta)$ defined for all $\zeta \in $
 \comM{$D(0)$}
 by
 \begin{equation}\label{Form_PS}
  P(\zeta)  =- \dfrac{1}{2i\pi}\dst\int_0^{+\infty} \log(\varphi(w))\Big\{ \dfrac{1}{w-\zeta}  -\dfrac{1}{w+1}\Big\}\; dw
 \end{equation}
  \comM{and can analytically {\color{black} be} extended on $S$, its unique analytic continuation} 
satisfying the equation \eqref{eq:fonc_add}. Moreover,
 \comM{
 the analytical continuation of $P$ (which we will denote by $P$ as well) on $S$
 has a simple expression 
 \begin{equation}\label{analytic_cont_P}
  P(\zeta) = P(\tilde{\zeta}) + \dfrac{k}{2} \log(\varphi(|\zeta|)),\quad  \qquad \arg(\zeta) \in (2k\pi,2 (k+1)\pi), \;  k \in \Z,
 \end{equation}
 where $\tilde{\zeta} \in D(0), \; {\color{black}\vert  \zeta\vert=\vert \tilde\zeta \vert},\;\arg(\tilde{\zeta}) \equiv \arg(\zeta) \mod (2\pi).
 $}
\end{lemma}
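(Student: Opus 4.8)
The plan is to solve the additive Carleman-type jump problem~\eqref{eq:fonc_add} by the classical Cauchy-integral (Sokhotski–Plemelj) technique, then to account for the multivaluedness of $\log\varphi$ by passing to the Riemann surface $S$. First I would check that the integral~\eqref{Form_PS} converges: near $w=0$ the factor $\{(w-\zeta)^{-1}-(w+1)^{-1}\}$ is bounded (for $\zeta \in D(0)$ away from the positive axis), and $\log\varphi(w)$ is integrable near $0$ because Remark~\ref{S2RFi} and Hypothesis~\ref{hyp4} give $\varphi(w)=\Phi(T^{-1}(\cdot))$ behaving like a fixed nonzero constant there (recall $\Phi(s)=(2-s)/(\alpha\gamma(K_0(s)-1))$, and $K_0$ is controlled by~\ref{hyp4}); near $w=+\infty$ we have $\log\varphi(w) = O(\log w)$ since $\Phi(s)\sim s/(\alpha\gamma)$ as $\Re e(s)\to\infty$ by Lemma~\ref{meth_laplace}, while the bracket decays like $w^{-2}$, so the integrand is $O(w^{-2}\log w)$ and the integral converges. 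The same estimates show that $P$ defined by~\eqref{Form_PS} is holomorphic on $D(0)$, i.e. on the slit plane $\C\setminus\R^+$ (equivalently the first sheet), being a Cauchy-type integral of a locally integrable density.

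Next I would establish the jump relation~\eqref{eq:fonc_add}. For $r>0$ fixed, the Sokhotski–Plemelj formula gives
\[
P(r-i0)-P(r+i0) = -\frac{1}{2i\pi}\left(\int_{0}^{+\infty}\log(\varphi(w))\left\{\frac{1}{w-(r-i0)}-\frac{1}{w-(r+i0)}\right\}dw\right),
\]
and the difference of the two Cauchy kernels is $2i\pi\,\delta_r(w)$ in the distributional sense (the extra $-(w+1)^{-1}$ term cancels, being continuous across $\R^+$), so the right-hand side equals $\log(\varphi(r))$; this is exactly~\eqref{eq:fonc_add}. Here one uses that $\varphi$ is continuous and nonvanishing on $(0,\infty)$ — which follows because $K_0(s)\ne 1$ and $2-s\ne 0$ for $\Re e(s)=s_0>2$, so that $\log\varphi$ is a genuine Hölder-continuous density away from isolated points and the Plemelj jump formula applies.

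Finally I would extend $P$ to all of $S$. Having the jump~\eqref{eq:fonc_add} across the positive axis, I define $P$ on the sheet $\arg(\zeta)\in(2k\pi,2(k+1)\pi)$ by the formula~\eqref{analytic_cont_P}: start from the value $P(\tilde\zeta)$ on the principal sheet at the point $\tilde\zeta$ with the same modulus and argument mod $2\pi$, and add $\tfrac{k}{2}\log(\varphi(|\zeta|))$. One checks directly that crossing from sheet $k$ to sheet $k+1$ (i.e. letting $\arg\zeta$ pass through $2(k+1)\pi$) changes the correction term by $\tfrac12\log\varphi(|\zeta|)$ on each side, so that the total jump of $P$ is $\log\varphi$, matching~\eqref{eq:fonc_add}; hence the pieces glue into a single analytic function on $S$ (analyticity across the seams follows from the jump being matched and $\log\varphi(|\zeta|)$ being the boundary value of the holomorphic function $\log\widetilde\varphi(\zeta)$, so locally $P$ differs from an honest holomorphic function only by a constant). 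Uniqueness of the analytic continuation is automatic once it exists. The main obstacle I anticipate is the bookkeeping at $w\to 0$ and $w\to\infty$: one must pick the determination of $\log$ in~\eqref{S4Elog} consistently with the branch structure of $\widetilde\varphi$ on $S$ so that $\log\varphi(w)$ is the correct (single-valued on $(0,\infty)$) density, and verify that the logarithmic growth of $\log\varphi$ at infinity is genuinely killed by the regularizing subtraction $-(w+1)^{-1}$ rather than merely by the $(w-\zeta)^{-1}$ decay — this is where Hypothesis~\ref{hyp4} (via Lemma~\ref{meth_laplace}) is essential.
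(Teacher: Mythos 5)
Your overall strategy --- absolute convergence of the Cauchy-type integral \eqref{Form_PS}, holomorphy on $D(0)$, the jump relation via Sokhotski--Plemelj, then a sheet-by-sheet extension --- is the same as the paper's. One justification is off in a minor way: as $w\to 0^+$ the point $T^{-1}(w)=s_0+\frac{\gamma}{2i\pi}\log w$ escapes to infinity along the line $\Re e(s)=s_0$, so $\varphi(w)$ does \emph{not} tend to a nonzero constant; it grows like $C\log w$, and $\log\varphi(w)$ diverges like $\log\left|\log w\right|$ at both endpoints (this is precisely Lemma~\ref{S4Estlog}). Convergence of \eqref{Form_PS} still holds, because $\log\left|\log w\right|$ is integrable near $0$ and the bracket is $O(w^{-2})$ at infinity, but the correct input is Lemma~\ref{S4Estlog} rather than boundedness of the density. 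The Plemelj step itself is fine and matches the paper.

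The substantive gap is in the last step, which you assert rather than carry out, and which in fact does not close as stated. With the conventions \eqref{def:Fi0}, the jump relation reads $P(r-i0)-P(r+i0)=\log(\varphi(r))$. Approaching the seam $\arg\zeta=2(k+1)\pi$ from sheet $k$, formula \eqref{analytic_cont_P} tends to $P(r-i0)+\frac k2\log(\varphi(r))$, while from sheet $k+1$ it tends to $P(r+i0)+\frac{k+1}2\log(\varphi(r))$; these one-sided limits agree if and only if $P(r-i0)-P(r+i0)=\frac12\log(\varphi(r))$, which contradicts \eqref{eq:fonc_add} by a factor of $2$. A genuine analytic continuation must pick up the \emph{full} jump $\log\varphi$ per revolution, so the correction on sheet $k$ should be $k\log\widetilde\varphi(\zeta)$ (the holomorphic branch); note also that $\log(\varphi(|\zeta|))$, a function of $|\zeta|$ alone, is not holomorphic and is not "a constant away from a holomorphic function" --- by Lemma~\ref{S4Estlog} it agrees with $\log\widetilde\varphi(\zeta)$ only up to a bounded, non-constant error. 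Finally, the phrase "the total jump of $P$ is $\log\varphi$, matching \eqref{eq:fonc_add}" reflects a conceptual slip: once continued to $S$, $P$ has \emph{no} jump across the seams; \eqref{eq:fonc_add} compares the two edges of $D(0)$, i.e.\ two distinct points of $S$ lying over the same $r$. You therefore need to either correct the coefficient (or identify the convention under which $\frac k2$ is consistent) and write out the boundary-value matching explicitly; as it stands the claimed "direct check" fails. (The paper's own proof of \eqref{analytic_cont_P} is a one-line "by induction", so this verification is genuinely the content you were expected to supply.)
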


\begin{remark}{In the definition of $P(\zeta)$, we could choose as well a fraction $\f{1}{w+a}$ instead of $\f{1}{w+1}$, for any $a>0,$} \comM{or any function $h(w)$ rendering \eqref{Form_PS} convergent.}
{The difference between two such} \comM{definitions of $P(\zeta)$} {would be given by a converging integral, namely
$$\f{1}{2i\pi} \dst\int_0^{+\infty} \log(\varphi(w))\Big\{ \comM{h(w)} -\dfrac{1}{w+1}\Big\}\; dw,$$
which is independent of $\zeta.$ This would lead to a multiplicative constant for the definition of $F,$ hence of $\tilde G.$ \comM{The function $h$} could be determined by the normalisation $\tilde G(2)=\rho$ as in~\eqref{eq:gBIS}; however to keep it simple, we stick here to the choice \comM{$h(w) =\dfrac{1}{w+a}$  and} $a={\color{black}1}$.
}\end{remark}

\begin{proof}[\textbf{Of Lemma \ref{S4L5}}]
From the hypothesis on $k_0$ we have,  for all $s\in \C$ such that $\Re e (s)>2$:
$$
|K_0(s)|\le \int _0^1|x^{s-1}|dk_0(x)= \int _0^1x^{\Re e  (s)-1}|dk_0(x)<\int _0^1xdk_0(x)=1.
$$
Therefore, the function $\Phi (s)$ is analytic in the domain of $\C$ defined by $\Re e(s)>2$ and $\Phi (s)\not =0$ in that domain.
Suppose now that $s_0>2$. 
By definition,  $T$ is a bijection from the strip
\begin{equation*}
\left\{s\in \C; \Re e (s)\in \left(s_0-\frac {\varepsilon \gamma } {2\pi }, s_0+\frac {\varepsilon \gamma } {2\pi } \right) \right\}.
\end{equation*}
into the piece of the Riemann's surface:
\begin{equation*}
\left\{z\in S; z=|z|e^{i\theta},\,\,\theta\in (-\varepsilon , \varepsilon )  \right\}.
\end{equation*}
Since by definition,  $\tilde \varphi (\zeta )=\Phi (T^{-1}(\zeta ))$,  we deduce that $\tilde \varphi $ is analytic on the domain
$\left\{\zeta \in S; \zeta =|z|e^{i\theta},\,\,\theta \in (-2\varepsilon , 2\varepsilon )\right\}$ and $\tilde \varphi (\zeta )\not =0$ in that domain. The function $\log \tilde \varphi (\zeta )$ is then well defined in that domain. It follows from Lemma \ref{S4Estlog} (in Appendix~\ref{sec:append:3lemmas}) that, for all $\zeta \not \in \R^+$:
\begin{equation*}
\int_0^{+\infty} \left|\log(\varphi(w))\right|\left | \dfrac{1}{w-\zeta}  -\dfrac{1}{w+1}\right|\; dw<\infty
\end{equation*}
and the function $P$ is then well defined and analytic in ${D(0)=}\left\{\zeta \in S; \zeta =|z|e^{i\theta},\,\,\theta\in (0, 2\pi ) \right\}$.\\
The analyticity of $P$ on the domain $D(\varepsilon )$ follows from the analyticity of the function $\log(\varphi(w))$ on $\left\{z\in S; z=|z|e^{i\theta},\,\,\theta\in (-\varepsilon , \varepsilon )  \right\}$, a deformation of the contour of integration from $\R^+$ to rays $e^{i\theta}\R^+$ with $|\theta|< \varepsilon $ and using Lemma \ref{S4Estlog}. 

From  {Sokhotski-Plemelj} formulas (see Appendix {\ref{sec:Sokh}}), applied to {the test function $f(w) =\f{-1}{2i\pi} \frac{(2+r)\log(\varphi(w))}{(w+1)}$} we obtain, for all $r>0$: 
  \begin{equation}
  \begin{aligned}
   P(r + i0) &= -\dfrac{1}{2}\log(\varphi(r)) 
   - \dfrac{1}{2i\pi}P.V.\dst\int_0^{\infty} log(\varphi(w))\Big\{ \dfrac{1}{w-{r}}  -\dfrac{1}{w+1}\Big\}\; dw, \\
   P(r - i0) &= \dfrac{1}{2}\log(\varphi(r)) 
   - \dfrac{1}{2i\pi}P.V.\dst\int_0^{\infty} log(\varphi(w))\Big\{ \dfrac{1}{w-{r}}  -\dfrac{1}{w+1}\Big\}\; dw.   
  \end{aligned}
 \end{equation}
The notation $P.V.$ stands for the usual principal value of Cauchy.\\
If we take now the difference between these two formulas we deduce that, for all $r>0$:
\begin{equation*}
\log(\varphi (r))=P(r-i0)-P(r+i0),
\end{equation*}
{\color{black} from which we deduce, by induction, the formula~\eqref{analytic_cont_P}.}
\end{proof}

\textbf{End of the proof of Proposition~\ref{S4L40}. }
We deduce that the function 
\begin{equation}
F(\zeta )=\exp\left(P(\zeta ) \right) 
\end{equation} given by
\begin{equation}\label{F}
F(\zeta) = \exp \Big(- \dfrac{1}{2i\pi}\dst\int_0^{+\infty} \log(\varphi(w))\Big\{ \dfrac{1}{w-\zeta}  -\dfrac{1}{w+1}\Big\}\; dw \Big), \qquad \comM{\zeta \in D(0)},
\end{equation}
\comM{can be analytically continued in {\color{black} $S$}}
and satisfies  \eqref{eq:fonc_mult}. Using the change of variables \eqref{changevarbles} in  \eqref{F} gives the expression \eqref{G:candidate} for the function $\widetilde G$.
{\color{black} From~\eqref{analytic_cont_P}, we moreover get the two following useful formulae, for $\zeta\in S$ and $\tilde\zeta\in D(0)$, with $\arg (\zeta)\in (2k\pi,2(k+1)\pi)$, $\vert\zeta\vert=\vert\tilde\zeta\vert$ and $\arg(\zeta)-\arg(\tilde \zeta)=2k\pi$:}
{\color{black}\begin{equation}\label{def_F_prolong}
F(\zeta)=\exp (P(\tilde\zeta)) \exp\left(\f{k}{2} \log \left(\varphi(\vert\zeta\vert)\right)\right),
\end{equation}
\begin{eqnarray*}
&&\widetilde{G}(s) = \exp \Bigg(- \dfrac{1}{\gamma }\dst\int_{\Re e(\sigma) =s_0}
\log\big(\Phi( \sigma)\big)\Bigg\{ \dfrac{1}{1-e^{2 i \pi \frac{s-\sigma }{\gamma}}}  -\dfrac{1}{\comM{1}+e^{2 i \pi \frac{s_0-\sigma}{\gamma}}}\Bigg\}\;     d\sigma  \Bigg)\times \\
&&\hskip 9cm \times \exp\left(\f{k}{2}\log \left(\Phi(s_0 +i\Im m(s))\right)\right),
\end{eqnarray*}
with $\Re (s) \in (s_0+k\gamma,s_0+(k+1)\gamma).$}
\end{proof}
\\
 For $s_0-\f{\eps\gamma}{2\pi} >2,$ both functions $\widetilde{G}$ and $G$ 
are analytic on $\left\{ s\in \C; \quad \Re e (s)\in (s_0-\frac {\varepsilon \gamma } {2\pi } , s_0+\frac {(2\pi +\varepsilon)\gamma  } {2\pi }) \right\}$ and satisfy  \eqref{Formula}, 
but nothing guarantees yet  that $G = \tilde{G}$.
We notice for instance that for $\Phi(s) = s$ and $\g =1$, 
the functions 
\begin{equation}
s \mapsto \frac{1}{2} \Gamma(s)  \quad \text{and} \quad s \mapsto \frac{1}{2}\Gamma(s) \Big[ 1+ \sin(2\pi s)\Big] 
\end{equation}
are two distinct solutions~{to the equation $\Gamma(s+1)=s\Gamma(s)$}. The first one never cancels whereas the second one does. Our purpose now is to show that the inverse Mellin transform of $\widetilde G$ exists, belongs to  $ L^1((1+x^{\gamma +1})dx)$ and satisfies \eqref{eq:g}: {we then conclude by uniqueness of solutions to~\eqref{eq:g} in $ L^1((1+x^{\gamma +1})dx)$, see {\it e.g.} Theorem 3.1. in~\cite{MMP05}, and by the properties of the inverse Mellin transform, see Theorem~11.10.1 in~\cite{misra1986transform}.}


\begin{theorem}\label{inv_mellin}
Let $g$ be the solution to the stationary equation \eqref{eq:g} satisfying \eqref{S2Emom}, and  $\widetilde{G}$ the function defined in \eqref{G:candidate} {\color{black} for $s_0>2+\gamma$}. 
Then
\begin{equation}
\label{S4E297}
g(x) = \dfrac{1}{2 i \pi} \dst\int_{\Re e(s)=u} \widetilde G(s)x^{-s}ds,\,\, ,  \forall u  > s_0.
\end{equation}
\end{theorem}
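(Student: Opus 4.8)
The plan is to show that the right-hand side of~\eqref{S4E297} defines a function lying in the uniqueness class of~\eqref{eq:g} and solving that equation, and then to identify it with $g$. Precisely, set
\[
h(x):=\f{1}{2i\pi}\dst\int_{\Re e(s)=u}\widetilde G(s)\,x^{-s}\,ds,\qquad u>s_0,
\]
and establish successively: (a) this integral converges absolutely and $h$ does not depend on $u$ (indeed on any $u>2$); (b) $h\in L^1\big((1+x^{\gamma+1})dx\big)$, and more generally $h$ enjoys the integrability and regularity~\eqref{S2Emom}--\eqref{S1E10}; (c) $h$ solves~\eqref{eq:g} in $\mathcal D'(0,\infty)$; (d) $h=g$, which is~\eqref{S4E297}.

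For (a)--(b) I would exploit the explicit formula~\eqref{G:candidate}--\eqref{def_G_prolong}. Since $|K_0(s)|\le\dst\int_0^1 x^{\Re e(s)-1}dk_0(x)<\dst\int_0^1 x\,dk_0(x)=1$ for $\Re e(s)>2$, the function $\Phi$ of~\eqref{Formula22} is analytic and non-vanishing on $\{\Re e(s)>2\}$, and along each vertical line $\Phi(u+it)\sim it/(\alpha\gamma)$ as $|t|\to\infty$, so $\log\Phi$ grows only like $\log|t|$ (equivalently $\log\varphi(w)\sim\log|\log w|$ as $w\to0,\infty$). Inserting this into the Cauchy-type representation of Lemma~\ref{S4L5} and the change of variables $\zeta=T(s)$ of Proposition~\ref{S4L40} — this being the content of Lemma~\ref{S4L1} — yields a $\Gamma$-function-type bound $|\widetilde G(u+it)|\le C_u(1+|t|)^{N_u}e^{-\pi|t|/(2\gamma)}$ on each vertical strip, which makes the integral absolutely convergent; since the integrand is analytic for $\Re e(s)>2$ and decays uniformly on strips in the imaginary direction, Cauchy's theorem lets the contour be moved freely, giving (a). Moving it to a large $\Re e(s)=u_1$ gives $|h(x)|\le C_{u_1}x^{-u_1}$, hence integrability of $x^{\gamma+1}h$ near $+\infty$; moving it to $\Re e(s)=u_2$ with $u_2$ just above $2$ — legitimate since $K_0(2)=1$ and $K_0'(2)<0$ make the singularity of $\Phi$ at $s=2$ removable, so $\widetilde G$ extends analytically a little past $\Re e(s)=2$ — gives $|h(x)|=o(x^{-2})$ as $x\to0$ (the $\log|\log w|$ refinement improving the crude bound $O(x^{-u_2})$); together with analogous estimates for $x^k h$, $k\ge1$, this yields (b).

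For (c) I would read the computation of Proposition~\ref{Prop2} backwards. By construction $\widetilde G$ solves~\eqref{Formula}, i.e.\ $(2-s)\widetilde G(s)+\alpha\gamma\,\widetilde G(s+\gamma)=\alpha\gamma K_0(s)\,\widetilde G(s+\gamma)$ for $\Re e(s)>2$; applying the inverse Mellin transform along $\Re e(s)=u$, using $M[xh'](s)=-sM[h](s)$, $M[x^\gamma h](s)=M[h](s+\gamma)$, and the Fubini identity~\eqref{S2E4898} which converts $K_0(s)M[h](s+\gamma)$ into the Mellin transform of $x\mapsto\dst\int_x^\infty u^{-1}k_0(x/u)u^\gamma h(u)\,du$, the injectivity of the Mellin transform yields $xh'+(2+\alpha\gamma x^\gamma)h=\alpha\gamma\dst\int_x^\infty u^{-1}k_0(x/u)u^\gamma h(u)\,du$ in $\mathcal D'(0,\infty)$, that is~\eqref{eq:g}; all these manipulations are justified by the bounds from (a)--(b). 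For (d): $h$ solves the linear homogeneous equation~\eqref{eq:g} in $L^1\big((1+x^{\gamma+1})dx\big)$; since~\eqref{eq:g}--\eqref{eq:gBIS} has a unique solution in that space (Theorem~3.1 in~\cite{MMP05}, see also~\cite{EMR05}) and $\dst\int_0^\infty x\,h(x)\,dx=M[h](2)=\widetilde G(2)$, fixing the free multiplicative constant in $P$ so that $\widetilde G(2)=\rho$ (cf.\ the remark after Lemma~\ref{S4L5}) forces $h=g$, i.e.~\eqref{S4E297}; the inversion properties of the Mellin transform (Theorem~11.10.1 in~\cite{misra1986transform}) then also give $G=\widetilde G$.

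The main obstacle is step (a): extracting from the Cauchy-integral representation of $P$ the correct $e^{-\pi|t|/(2\gamma)}$ decay of $\widetilde G$ along vertical lines, with controlled polynomial corrections, since the exponent $P(T(u+it))$ is a principal-value integral of a slowly growing ($\log|\log w|$) density against an oscillatory kernel. Every contour shift and every inverse-Mellin identity in (b)--(c) rests on these estimates; once they are available (Lemma~\ref{S4L1}), the remaining steps are routine.
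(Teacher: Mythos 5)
Your overall plan coincides with the paper's: define $\tilde g$ (your $h$) by inverse Mellin transform, show it belongs to $L^1\big((1+x^{\gamma+1})dx\big)$, show it solves~\eqref{eq:g}, and invoke the uniqueness Theorem 3.1 of~\cite{EMR05} to conclude $\tilde g = g$. However, the central analytic ingredient of your steps (a)--(b) is incorrect, and the rest of your argument rests on it.

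The claimed bound $|\widetilde G(u+it)| \lesssim (1+|t|)^{N_u}\,e^{-\pi|t|/(2\gamma)}$ is false. Exponential decay of this Gamma-function type is a property one could extract from the genuine Mellin transform $G$ of the smooth profile $g$, but $\widetilde G$ is defined \emph{a priori} by the Cauchy integral and, from that formula alone, decays only \emph{polynomially}. Concretely, Lemma~\ref{Lemma_estimate_log} gives $\Im m\big(I(\zeta)\big)=(\pi-\theta)\log\big|\log|\zeta|\big|+\mathcal O(1)$, and since $|\log|\zeta||=2\pi|\Im m(s)|/\gamma$ under $\zeta=e^{2i\pi(s-s_0)/\gamma}$, this yields $|\widetilde G(u+iv)|\approx C_u|v|^{-b/2}$ as $|v|\to\infty$, with $b=(\pi-\theta-2k\pi)/\pi$ depending on $u$ (here $\theta=2\pi(u-s_0)/\gamma-2k\pi$). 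For $u>s_0$, as your formula requires, one has $k=0$ and $b<1$, so the Mellin inversion integral is not even absolutely convergent; for $u\in(s_0-\gamma,s_0)$ one has $k=-1$ and $b\in(1,3)$, which is merely enough for $L^2$. (Of course exponential decay holds \emph{after} one has established $G=\widetilde G$, but using it in the proof would be circular.)

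Consequently your route through absolute convergence, free contour shifting, and pointwise bounds $|h(x)|\le C_{u_1}x^{-u_1}$ at large $u_1$ is unavailable. The paper instead proves Lemma~\ref{S4L1} by working in $L^2$: the polynomial decay with $b>1$ gives $\widetilde G(u+i\,\cdot)\in L^2(\R)$; Plancherel then yields $x^{u-1/2}\tilde g\in L^2_x$; and Cauchy--Schwarz converts this into the required $L^1$ moment bounds, through a careful choice of $u<s_0$ (even $u<2$) and $\eps>2\pi$ --- which is precisely why the hypothesis $s_0>2+\gamma$ appears. The inversion integral itself is understood in the sense of distributions (Theorem 11.10.1 of~\cite{misra1986transform}). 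Your steps (c) and (d), reading Proposition~\ref{Prop2} backwards and invoking uniqueness, match the paper's Lemma~\ref{S4E567} and conclusion; your observation that the normalization $\widetilde G(2)=\rho$ must be matched to pin down the multiplicative constant in $P$ is a legitimate point that the paper only addresses implicitly in the remark after Lemma~\ref{S4L5}.
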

The proof of Theorem \ref{inv_mellin} is done in two steps. We first prove that the inverse Mellin transform of $\widetilde G$, that we denote  $\tilde g$,  is  a function, with suitable integrability properties on $(0, \infty)$. The theorem  then follows using a uniqueness result for the solutions of the equation \eqref{eq:g}.

{The results are based on} the behaviour of $\tilde\varphi$ on the kernel $k_0$, see Lemma~\ref{S4Estlog} in Appendix~\ref{sec:append:3lemmas}).
From this,  we derive the asymptotic behaviour of $F(\zeta)$
as $\vert\zeta\vert$ goes to $0$ or to $\infty$.
\begin{lemma}
\label{S4L1}
 The inverse Mellin transform of $\widetilde G$ defined { in the sense of distributions} as 
  \begin{equation}
  \label{gtilde}
  \widetilde g(x) = \dfrac{1}{2 i \pi} \dst\int_{\Re e(s)=u} \widetilde G(s)x^{-s}ds
 \end{equation}
 for $u \in \left(s_0-\frac {\varepsilon \gamma } {2\pi } , s_0+\frac {(2\pi +\varepsilon)\gamma  } {2\pi }\right)$, 
satisfies:
\begin{equation}
\label{S4Egtilde}
g\in L^1((x+x^{\gamma +1})). 
\end{equation}
 \end{lemma}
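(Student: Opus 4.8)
The plan is to convert the explicit Cauchy–integral representation of $\widetilde G$ into quantitative decay estimates along vertical lines, and then to slide the line of integration in~\eqref{gtilde} so as to control $\widetilde g$ near $x=0$ and near $x=\infty$.

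\emph{Step 1: reduction to the behaviour of $F$.} I would start from the conformal change of variables of Proposition~\ref{S4L40}, $\zeta=T(s)=e^{2i\pi(s-s_0)/\gamma}$, and the writing $\widetilde G(s)=F(T(s))=e^{P(T(s))}$ with $P$ given by~\eqref{Form_PS}. A vertical line $\Re e(s)=u$ is mapped onto the ray $\{\arg\zeta=2\pi(u-s_0)/\gamma\}$ of the Riemann surface $S$, with $|\zeta|=e^{-2\pi\Im m(s)/\gamma}$; thus $\Im m(s)\to+\infty$ corresponds to $\zeta\to 0$ along that ray and $\Im m(s)\to-\infty$ to $\zeta\to\infty$. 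Consequently the behaviour of $\widetilde G$ at $\pm i\infty$ on each vertical line is dictated entirely by the behaviour of $F(\zeta)$ as $|\zeta|\to 0$ and $|\zeta|\to\infty$.

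\emph{Step 2: asymptotics of $F$, hence decay of $\widetilde G$.} By Remark~\ref{S2RFi} one has $|K_0(s)|<1$ for $\Re e(s)>2$, so $K_0(s)-1$ stays in a fixed disc around $-1$; hence $\Phi(s)=(2-s)/\bigl(\alpha\gamma(K_0(s)-1)\bigr)$ has modulus of order $|s|$ and argument confined to a compact subinterval of $(0,2\pi)$ as $|\Im m(s)|\to\infty$ with $\Re e(s)$ in a compact subset of $(2,\infty)$. The fine asymptotics of $\varphi$ (equivalently of $\tilde\varphi=\Phi\circ T^{-1}$) near $0$ and $\infty$ in $S$ — and this is exactly where assumption~\ref{hyp4} on $k_0$ near the two ends of its support comes in — are quantified by Lemma~\ref{S4Estlog}: $\log|\varphi(w)|$ grows only like $\log|\log w|$, while $\arg\varphi(w)$ is bounded, with limits bounded away from $0$ as $w\to 0$ and bounded away from $2\pi$ as $w\to\infty$. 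Inserting $\log\varphi(w)=\log|\varphi(w)|+i\arg\varphi(w)$ into~\eqref{Form_PS} and isolating, via the Sokhotski–Plemelj analysis already used in the proof of Lemma~\ref{S4L5}, the contribution of $\arg\varphi$ integrated against the Cauchy kernel, I expect to obtain $\Re e\,P(\zeta)\to-\infty$, with a bound of the type $|F(\zeta)|\le C\,|\zeta|^{a}\bigl(1+|\log|\zeta||\bigr)^{N}$ for $|\zeta|\le 1$ and $|F(\zeta)|\le C\,|\zeta|^{-b}\bigl(1+|\log|\zeta||\bigr)^{N}$ for $|\zeta|\ge 1$, for some $a,b>0$ and some integer $N\ge 0$, the contribution of $\log|\varphi|$ producing only the polynomial corrections. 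Translating back, this yields $|\widetilde G(u+it)|\le C_u\,e^{-c|t|}(1+|t|)^{N}$ as $|t|\to\infty$, for some $c>0$, uniformly for $u$ in a fixed interval.

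\emph{Step 3: from decay to integrability.} With such decay the integral in~\eqref{gtilde} converges absolutely, so $\widetilde g$ is a genuine function and $|\widetilde g(x)|\le C_u\,x^{-u}$ for every admissible $u$. Since $\widetilde G$ is analytic on the strip between two admissible lines (Proposition~\ref{S4L40}), Cauchy's theorem lets us shift the contour, the rapid decay at $\pm i\infty$ annihilating the horizontal segments; when the contour crosses a singularity of $\widetilde G$ (coming from a zero of $K_0(\cdot)-1$ with real part $\le 2$), a residue term of the form $c_j x^{-s_j}$, $\Re e(s_j)\le 2$, is picked up. Taking $u$ near the right end of the admissible range — possible because $s_0>2+\gamma$ by hypothesis of Theorem~\ref{inv_mellin} — gives $|\widetilde g(x)|\le C\,x^{-u_+}$ with $u_+>\gamma+2$, whence $\int_1^\infty|\widetilde g(x)|\,x^{\gamma+1}\,dx<\infty$; pushing $u$ to the left and keeping track of the residue terms, all of the form $x^{-s_j}$ with $\Re e(s_j)\le 2$, hence integrable against $x\,dx$ near $0$, gives $\int_0^1|\widetilde g(x)|\,x\,dx<\infty$. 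Combining, $\widetilde g\in L^1\bigl((x+x^{\gamma+1})\,dx\bigr)$, which is the assertion (with $g=\widetilde g$).

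The main obstacle is Step~2: squeezing out of the explicit formula~\eqref{Form_PS}–\eqref{G:candidate} the genuine decay — exponential up to polynomial factors — of $\widetilde G$ at $\pm i\infty$. This requires a sign-sensitive estimate of $P(\zeta)$, in which the principal-value contributions of $\arg\varphi$ against the Cauchy kernel (and their cancellation with the regularising term $1/(w+1)$) must be isolated and their sign controlled; assumption~\ref{hyp4} is precisely what makes $\arg\varphi$ well behaved, with the correct sign, at the two ends of $\supp(k_0)$. The conformal reduction of Step~1, the contour shifts and the final bookkeeping of Step~3 are comparatively routine.
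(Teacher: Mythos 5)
Your overall architecture (pass to the variable $\zeta=T(s)$, estimate $F=e^{P}$ as $|\zeta|\to 0,\infty$, translate into decay of $\widetilde G$ on vertical lines, then move the line $\Re e(s)=u$ to handle $x\to 0$ and $x\to\infty$ separately) is the same as the paper's. But the quantitative core of your Step~2 is wrong, and the error propagates fatally into Step~3. Since $\Phi(s)\sim Cs$ as $|\Im m(s)|\to\infty$, one has $\log\widetilde\varphi(\zeta)=\log\bigl|\log|\zeta|\bigr|+\mathcal O(1)$ (Lemma~\ref{S4Estlog}), and the Cauchy integral in \eqref{Form_PS} then yields $\Re e\,P(\zeta)=-\tfrac{\pi-\theta-2k\pi}{2\pi}\log\bigl|\log|\zeta|\bigr|+\mathcal O(1)$, not a negative power of $|\zeta|$. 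Hence $|F(\zeta)|\asymp\bigl|\log|\zeta|\bigr|^{-(\pi-\theta-2k\pi)/(2\pi)}$ and, since $|\log|\zeta||\asymp|\Im m(s)|$, the decay of $\widetilde G$ on a vertical line is only \emph{polynomial}, $|\widetilde G(u+iv)|\le C|v|^{-b/2}$, with an exponent that on the principal sheet is even below $1/2$ and exceeds $1/2$ only after choosing $u<s_0$ (the sheet $k=-1$). There is no exponential decay $e^{-c|t|}$, and no bound of the form $|F(\zeta)|\le C|\zeta|^{a}$: the real part of $P$ diverges only doubly-logarithmically in $|\zeta|$.

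Because of this, the inverse Mellin integral \eqref{gtilde} is \emph{not} absolutely convergent, the pointwise bound $|\widetilde g(x)|\le C_u x^{-u}$ of your Step~3 is unavailable, and the contour-shifting-with-residues argument cannot be run as you describe (there are in any case no residues: $\widetilde G$ is analytic on the whole strip by Proposition~\ref{S4L40}, and the paper simply uses that $\tilde g$ is independent of $u$ there). The missing idea is the $L^2$ detour: with exponent $b/2>1/2$ one gets $\widetilde G(u+i\cdot)\in L^2(\R)$, Plancherel (after the substitution $z=\log x$) gives $x^{u-1/2}\tilde g(x)\in L^2(dx)$, and the weighted $L^1$ bounds near $0$ and $\infty$ then follow from Cauchy--Schwarz against $x^{1+\gamma-u+1/2}$ on $(1,\infty)$ (choosing $u>2+\gamma$) and against $x^{1-u+1/2}$ on $(0,1)$ (choosing $u<2$, which forces $\eps>2\pi$). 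Without replacing your exponential-decay claim by this $L^2$/Cauchy--Schwarz mechanism, the proof does not close.
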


\begin{proof}
We first prove that the integral in \eqref{gtilde} is convergent. To this end we first recall that  $\widetilde G$ is analytic in the domain ${\left\{s\in \C; \Re e (s) \in  (s_0-\f{\eps\gamma}{2\pi}, s_0+\f{\eps\gamma}{2\pi} )\right\}}$ (cf. {\color{black}Proposition~}\ref{S4L40}). It then follows that for all $x>0$ and $u \in  {(s_0-\f{\eps\gamma}{2\pi}, s_0+\gamma+\f{\eps\gamma}{2\pi} )}$  fixed, the function $|\widetilde G(u+iv)x^{-u-iv}|$ is locally integrable with respect to $v$. Let us  see  now what is the asymptotic behaviour of $|\widetilde G(u+iv)|$ for $u \in  {(s_0-\f{\eps\gamma}{2\pi}, s_0+\gamma +\f{\eps\gamma}{2\pi} )}$ fixed  when  $|v|\to \infty$. This will be easier in terms of the variable $\zeta $  introduced in \eqref{changevarbles} and the function $F$ defined in  \eqref{CV}, whose expression was obtained in \eqref{def_F_prolong}:
{\color{black}$$F(\zeta)=\exp \Big(- \dfrac{1}{2i\pi}\dst\int_0^{+\infty} \log(\tilde\varphi(w))\Big\{ \dfrac{1}{w-\tilde \zeta}  -\dfrac{1}{w+\comM{1}}\Big\}\; dw \Big)\exp\left(\f{k}{2} \log \left(\varphi(\vert\zeta\vert)\right)\right),$$
$$ \tilde \zeta \in D(0),\; \zeta=\tilde \zeta e^{2ik\pi},$$}
where $\tilde\varphi$ is defined in  \eqref{S4Etildephi}. 

Notice that if $s=u+iv{\color{black}+k\gamma}$ with $u\in (s_0, s_0+\gamma )$ fixed, {\color{black} $k\in\Z$} and $v\in \R$, this yields in terms of the variable $\zeta =T(s)=e^{2i\pi \frac {s-s_0} {\gamma }}$:
\begin{eqnarray*}
\zeta &=&e^{-2\pi \frac {v} {\gamma }}e^{2i\pi \frac {u-s_0} {\gamma }}{\color{black}e^{2ik \pi}}=re^{i\theta}{\color{black}e^{2ik\pi}},\\
r&=&e^{-2\pi \frac {v} {\gamma }}\in (0, \infty),\\
\theta&=&2\pi \frac {u-s_0} {\gamma } \in (0, 2\pi ),\,\,\,\hbox{fixed}.
\end{eqnarray*}
Our goal is then to estimate  the behaviour of $F(\zeta )$ as $|\zeta |\to \infty$ and $|\zeta |\to 0$ and $\theta\in (0, 2\pi ),$ {\color{black} $k\in \Z$} fixed. 
By Lemma \ref{S4Estlog}
\begin{equation*}
F(\zeta )=\exp\left(-\frac {1} {2i\pi }\left(I({\color{black}\tilde \zeta} ){+\mathcal O(1) }\right) \right){\color{black}\exp\left(\f{k}{2} \log \left(\varphi(\vert\zeta\vert)\right)\right)},
\end{equation*}
where
\begin{eqnarray*}
I({\color{black}\tilde \zeta})& = &\int _0^\infty \log \left|\log w  \right|\left(\frac {1} {w-{\color{black}\tilde\zeta} }-\frac {1} {w+\comM{1}} \right)dw. 
\end{eqnarray*}

We have then, as $|\zeta |\to \infty$ or $|\zeta |\to 0$:
\begin{eqnarray*}
| F(\zeta)| &= \Big|\exp\left( - \dfrac{1}{2 i \pi  } I({\color{black}\tilde \zeta}){\color{black}+\f{k}{2}\log\left(\varphi(\vert\zeta\vert)\right)}{ +\mathcal O(1)} \Big\} \right) \Big|  \\ \\
&= \exp\left( -\dfrac{1}{2\pi } \Im m (I(\zeta)) {\color{black} +\f{k}{2} \Re e\left(\log (\varphi(\vert\zeta\vert))\right)}{+\mathcal O(1)}\right).
\end{eqnarray*}
{\color{black}By Lemma~\ref{S4Estlog}
\begin{equation*}
\log (\widetilde \varphi (\zeta ))=\log \left|\log {\vert} \zeta {\vert}  \right|+\mathcal O(1),\,\, \text{ for } \,\,  |\zeta | \to 0^+\text{ or } |\zeta |\to \infty.
\end{equation*}}
{\color{black}We can use this expression for $\zeta\to \vert \zeta\vert$ and obtain
\begin{equation*}
\Re e\big(\log (\varphi (\vert \zeta\vert ))\big)=\log \left|\log {\vert} \zeta {\vert}  \right|+\mathcal O(1),\,\, \text{ for } \,\,  |\zeta | \to 0^+\text{ or } |\zeta |\to \infty.
\end{equation*}}

By { Lemma \ref{Lemma_estimate_bounded_term} and Lemma \ref{Lemma_estimate_log}} 
$$
- \Im m (I(\zeta))=-\log \left|\log (|\zeta|)\right|\comM{(\pi-\theta)}
+\mathcal O(1), \quad \text{as $|\zeta |\to \infty$ or $|\zeta |\to 0$}.
$$
{\color{black} Finally we obtain
$$| F(\zeta)|=\exp\left( -\log |\log {\vert} \zeta {\vert} \f{\pi-\theta-2k\pi}{2\pi} +\mathcal O(1)\right) $$}

We deduce that for {\color{black}$\zeta$ such that $\theta \in (0,\pi)$ and $k\leq 0$, }
\begin{equation}
\label{S4E450}
| F(\zeta)| =o(1),\,\,\hbox{as}\,\,|\zeta |\to 0 \,\,\hbox{and}\,\,|\zeta |\to \infty.
\end{equation}
Using the change of variables \eqref{changevarbles}, \eqref{CV}, it follows from \eqref{S4E450}  that for all $u\in (\comM{s_0-\frac {\varepsilon \gamma } {2\pi }}, s_0+\comM{\dfrac{\gamma}{2} })$ fixed:
\begin{equation}
\label{S4E454}
| \widetilde G(s)| =o(1),\,\,\hbox{as}\,|\Im m (s)|\to  \infty,\,\,\Re e (s)=u.
\end{equation}

The function  $\widetilde G$ is analytic in the strip $\Re  e(s)\in (s_0-\frac {\varepsilon \gamma } {2\pi } , s_0+\frac {(2\pi +\varepsilon)\gamma  } {2\pi })$  and bounded as
$|s|\to \infty$ \comM{for $\Re e(s) \in ( s_0-\frac {\varepsilon \gamma } {2\pi } , s_0 + \dfrac{\gamma}{2} )$}.
Its inverse  Mellin transform  $\widetilde g$  is then uniquely defined as a distribution on $(0, \infty)$ by \eqref{gtilde} {in the sense of distributions}, where $u$ may take any value in the interval 
\comM{$ (s_0-\frac {\varepsilon \gamma } {2\pi }, s_0 + \dfrac{\gamma}{2} )$
using Theorem 11.10.1 in \cite{misra1986transform}.}

{\color{black} Let us now study the regularity of $\tilde g$.} \comM{
Let us assume $u<s_0$, {\color{black} {\it i.e.} for instance $k=-1$}. Then, there is $b > 1$ such that}
\begin{equation*}
|F(\zeta )|\le Ce^{-\frac {\comM{b\pi}} {2\pi } \log(\log|\zeta |)},\,\,\,|\zeta |\to 0 ,\,\,|\zeta |\to \infty,
\end{equation*}
Therefore,
\begin{equation*}
|\widetilde G(u+iv )|\le C|v|^{-\frac {\comM{b}} {2} },\,\,\,|v |\to \infty, \quad b>1,
\end{equation*}
and 
\begin{equation*}
\int  _{-\infty}^{\infty}\left | \widetilde G(u+iv)\right|^2dv<\infty.
\end{equation*}
This shows that for any {\color{black} $u<s_0$}  the function $\widetilde G_u(v): v\to  \widetilde G(u+iv)$ is such that $\widetilde G_u\in L^2(\R)$. It follows that its Fourier transform also belongs to $L^2(\R)$:
\begin{equation}
\label{S4E700}
\int  _{-\infty}^{\infty}\left | \int  _{-\infty}^{\infty}\widetilde G(u+iv)e^{-ivz}dv\right|^2dz<\infty.
\end{equation}
Using the change of variables $z=\log x$ we deduce:
\begin{eqnarray}
\int  _{-\infty}^{\infty}\left | \int  _{ -\infty}^{\infty}\widetilde G(u+iv)e^{-ivz}dv\right|^2dz=\int  _{-\infty}^{\infty}\left | \int  _{-\infty}^\infty\widetilde G(u+iv)e^{-iv\log x}dv\right|^2\frac {dx} {x} \nonumber \\
=\int  _{-\infty}^{\infty}\left | \int  _{ -\infty}^{\infty}\widetilde G(u+iv)x^{-\frac {1} {2}-iv}dv\right|^2dx.\label{S4E701}
\end{eqnarray}
Then, since
\begin{equation*}
\widetilde g(x)=\frac {1} {2\pi }\int  _{ v=-\infty}^{v=\infty}\widetilde G(u+iv)x^{-(u+iv)}dv,\,\,\,\,\,\, u\in (s_0, s_0+\gamma ),
\end{equation*}
it follows from  \eqref{S4E700} and \eqref{S4E701} that:
$$
\tilde g(x)=ix^{-u}\int  _{ -\infty}^{\infty}\widetilde G(u+iv)x^{-iv}dv,
$$
\begin{equation*}
|\tilde g(x)|= x^{-u+\frac {1} {2}}\left|\int  _{ -\infty}^{\infty}\widetilde G(u+iv)x^{-\frac {1} {2}-iv}dv\right|.
\end{equation*}
{\color{black} Hence $\tilde g (x) x^{u-\f{1}{2}} \in L^2_x$ as soon as $u<s_0$.}

\comM{
We recall that the value of $\tilde{g}$ defined by \eqref{gtilde} does not depend on $u$. {\color{black} Let us}
 choose $s_0>2$ and $\eps >0 $ sucht that $\tilde{g} \in L^1(x+ x^{\g+1})$.}

\comM{ {\bf Asymptotic behavior of $\tilde{g}$ around $x = +\infty$}.}
\begin{eqnarray*}
\int _1^\infty x^{1+ \gamma }|\tilde g(x)|dx= \int _1^\infty x^{1+ \gamma -u+\frac {1} {2}}\left|\int  _{ -\infty}^{\infty}\widetilde G(u+iv)x^{-\frac {1} {2}-iv}dv\right|dx\\
\le
\left( \int _1^\infty \left|\int  _{ -\infty}^{\infty}\widetilde G(u+iv)x^{-\frac {1} {2}-iv}dv\right|^2dx\right)^{1/2}
\left(\int _1^\infty x^{2(1+ \gamma  -u)+1}dx\right)^{1/2},
\end{eqnarray*}
the last integral in the right hand side being convergent whenever
\begin{eqnarray*}
2(1+ \gamma -u)+1<-1,\Longleftrightarrow \g+1 <u-1,
\end{eqnarray*}
\comM{i.e. whenever} we choose \comM{$s_0$ such that}
$$
2+\gamma <u<s_0.
$$
\comM{Notice that this also implies}  that
\begin{eqnarray*}
\int _1^\infty x|\tilde g(x)|dx<\infty.
\end{eqnarray*}

\comM{ {\bf Asymptotic behavior of $\tilde{g}$ around $x = 0$}.}

\begin{equation}
\label{ordre1}
\begin{aligned}
\int _0^1 x^{ }|\tilde g(x)|dx&= \int _0^1 x^{1  -u+\frac {1} {2}}\left|\int  _{ -\infty}^{\infty}\widetilde G(u+iv)x^{-\frac {1} {2}-iv}dv\right|dx\\
&\le
\left( \int _0^1 \left|\int  _{ -\infty}^{\infty}\widetilde G(u+iv)x^{-\frac {1} {2}-iv}dv\right|^2dx\right)^{1/2}
\left(\int _0^1 x^{2(1  -u)+1}dx\right)^{1/2}
\end{aligned}
\end{equation}
The last integral in the right hand side is convergent whenever
\begin{eqnarray*}
2(1  -u)+1>-1\Longleftrightarrow u<2, 
\end{eqnarray*}
then we need to choose $u<2$.
\comM{To be allowed to take $u<2$ with $2 + \gamma <s_0$, we need to impose 
that $\eps$ satisfies \begin{equation*}
 2 >u> s_0- \dfrac{\eps \gamma}{2\pi}> 2 + \gamma- \dfrac{\eps \gamma}{2\pi}, 
\qquad \text{ i.e. }\qquad  \eps >2 \pi. \end{equation*}
}
\comM{Notice that \eqref{ordre1} also implies}  that
$$
\int _0^1x^{1+\gamma }\tilde g(x)dx<\infty.
$$
Thus, \comM{if $s_0 >2+ \g$ and $\eps>2\pi$, the function $\tilde{g}$ defined by \eqref{gtilde} with $u \in (s_0 - \dfrac{\eps \gamma}{2\pi} , s_0 + \dfrac{\gamma}{2})$ satisfies $\tilde g\in L^1((x+x^{\gamma +1})dx)$. 
}
\end{proof} 

In order to prove that $\tilde g$ and $g$ are the same we first show the following lemma.
\begin{lemma}
\label{S4E567}  
The function $\tilde g$ defined by \eqref{gtilde} satisfies the equation \eqref{eq:g}.
\end{lemma}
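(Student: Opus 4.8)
To prove this lemma, the plan is to argue as a converse to Proposition~\ref{Prop2}: since $\widetilde G$ solves the functional equation~\eqref{Formula} (Proposition~\ref{S4L40}), its inverse Mellin transform $\tilde g$ should solve the stationary equation~\eqref{eq:g}. Concretely, I would show that the Mellin transform of the left-hand side of~\eqref{eq:g}, evaluated at $\tilde g$, vanishes identically on a vertical line, and then conclude by injectivity of the Mellin transform (Theorem~11.10.1 in~\cite{misra1986transform}).

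First I would record the regularity of $\tilde g$. By Proposition~\ref{S4L40}, $\widetilde G$ is entire, never vanishes, and satisfies~\eqref{Formula} on all of $\C$; moreover the estimates obtained in the proof of Lemma~\ref{S4L1} show that on vertical lines $\Re e(s)=u$ with $u$ in suitable intervals arbitrarily far to the left one has $|\widetilde G(u+iv)|\lesssim |v|^{-N}$ with $N$ as large as one likes. Representing $\tilde g$ by~\eqref{gtilde} along such a line, the contour shifts being legitimate since $\widetilde G$ is entire and tends to $0$ on every vertical line with $\Re e(s)<s_0+\tfrac{\gamma}{2}$, shows that $\tilde g\in\mathcal C^\infty(0,\infty)$; together with Lemma~\ref{S4L1} it belongs to $L^1((x+x^{\gamma+1})dx)$, decays faster than any power of $x$ as $x\to 0^+$, and has near $x=+\infty$ the $L^2$-type control established there. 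In particular $M[\tilde g]=\widetilde G$ on an open vertical strip of width exceeding $\gamma$, so one may fix $s$ in it with $s+\gamma$ still in it.

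Then I would compute the Mellin transform of each term of~\eqref{eq:g} at this $s$, repeating the manipulations of Proposition~\ref{Prop2}: an integration by parts gives $\int_0^\infty x^{s-1}\,x\tilde g'(x)\,dx=-s\,\widetilde G(s)$, the boundary term $x^{s}\tilde g(x)$ vanishing at $0$ by the rapid decay there and at $+\infty$ along a suitable sequence $R_n\to+\infty$ by the $L^2$ bound; next $\int_0^\infty x^{s-1}(2+\a\g x^{\g})\tilde g(x)\,dx=2\widetilde G(s)+\a\g\,\widetilde G(s+\g)$; and Fubini's theorem, exactly as in~\eqref{S2E4898}, gives $\int_0^\infty x^{s-1}\big(\int_x^\infty \tfrac1u k_0(\tfrac xu)u^{\g}\tilde g(u)\,du\big)dx=K_0(s)\,\widetilde G(s+\g)$. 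Summing with the signs of~\eqref{eq:g}, the Mellin transform of $h(z):=z\tilde g'(z)+(2+\a\g z^{\g})\tilde g(z)-\a\g\int_z^\infty \tfrac1u k_0(\tfrac zu)u^{\g}\tilde g(u)\,du$ equals $(2-s)\widetilde G(s)-\a\g\big(K_0(s)-1\big)\widetilde G(s+\g)$, which is $0$ by~\eqref{Formula}. Since $h$ is continuous on $(0,\infty)$ with $x^{s-1}h(x)\in L^1$ on that strip, its vanishing Mellin transform forces $h\equiv 0$; hence $\tilde g$ solves~\eqref{eq:g}, in particular in $\mathcal D'(0,\infty)$.

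The hard part is the rigorous justification of the two manipulations on which everything hinges --- the integration by parts (existence and vanishing of the boundary terms) and Fubini's theorem for the fragmentation integral. These are admissible precisely because of the integrability and decay of $\tilde g$ prepared in Lemma~\ref{S4L1}; retrospectively this is why the contour $\Re e(s)=u$ in~\eqref{gtilde} had to be allowed to range far to the left, and why the conditions $s_0>2+\gamma$ and $\varepsilon>2\pi$ were imposed there. If one prefers to avoid pointwise estimates, the same identity can be obtained by testing~\eqref{eq:g} against $\phi\in\mathcal D(0,\infty)$ and inserting the representation~\eqref{gtilde} for $\tilde g$, which reduces the verification to the same Mellin computation performed under the integral sign.
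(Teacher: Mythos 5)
Your argument is correct in substance, but it is essentially the paper's proof run in the opposite direction rather than an independent route. The paper takes the \emph{inverse} Mellin transform of the functional equation $(2-s)\widetilde G(s)=\alpha\gamma(K_0(s)-1)\widetilde G(s+\gamma)$ term by term: the identity $\frac{1}{2i\pi}\int_{\Re e(s)=u}s\widetilde G(s)x^{-s-1}ds=-\partial_x\bigl(\frac{1}{2i\pi}\int_{\Re e(s)=u}\widetilde G(s)x^{-s}ds\bigr)$ produces $x\tilde g'+2\tilde g$, a contour shift $\sigma=s+\gamma$ turns the inverse transform of $\widetilde G(s+\gamma)$ into $x^\gamma\tilde g(x)$, and Fubini applied to $K_0(s)\widetilde G(s+\gamma)$ produces the fragmentation integral; equality of the two sides then yields \eqref{eq:g} directly in $\mathcal D'(0,\infty)$. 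You instead take the \emph{forward} Mellin transform of the residual of \eqref{eq:g} at $\tilde g$ and invoke injectivity. The computations (Fubini for the kernel term, the duality between $zg'$ and $-sG(s)$) are identical, but your direction carries a genuine extra burden that the paper's avoids: you must upgrade $\tilde g$ to a pointwise differentiable function with quantitative decay at $0$ and at $\infty$ so that the integration by parts and its boundary terms can be justified, whereas the paper never needs $\tilde g$ to be more than a distribution. Your route is salvageable --- the decay exponent $\frac{\pi-\theta-2k\pi}{2\pi}$ from the proof of Lemma~\ref{S4L1} does improve as the contour moves left, and $\Re e(s)$ can be kept near $2$ for the Mellin identities since $\tilde g\in L^1((x+x^{\gamma+1})dx)$ --- but everything you defer to ``the hard part'' is precisely what the distributional formulation renders unnecessary. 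Note finally that your closing alternative (test \eqref{eq:g} against $\phi\in\mathcal D(0,\infty)$ and push the computation onto $\widetilde G$) is, once unwound, exactly the paper's proof.
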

\begin{proof}
By construction $\widetilde G$  is analytic in the region $\left\{ s\in \C; \Re e (s)\in (s_0-\frac {\varepsilon \gamma } {2\pi } , s_0+\frac {(2\pi +\varepsilon)\gamma  } {2\pi }) \right\}$ and satisfies
\begin{equation}
\label{S4EW1}
(2-s) \widetilde G(s) =  \a \g (K_0(s)-1) \widetilde G(s+\g)
\end{equation}
on that region. We take the inverse Mellin transform in both sides of \eqref{S4EW1} term by term. Since $(2-s)\tilde G(s)$ is again analytic in the strip $u\in (s_0-\frac {\varepsilon \gamma } {2\pi } , s_0+\frac {(2\pi +\varepsilon)\gamma  } {2\pi }) $ and $\tilde G(s)$ is bounded as $|s|\to \infty$ in that strip, we deduce that the inverse Mellin transform of $(2-s) \widetilde G(s)$ is a well-defined distribution. Moreover:
\begin{eqnarray*}
\frac {1} {2i\pi }\int _{\Re e(s)=u}(2-s) \widetilde G(s)x^{-s}ds=2\tilde g(x) -\frac {1} {2i\pi }\int _{\Re e(s)=u}s \widetilde G(s)x^{-s}ds\end{eqnarray*}
Using the well-known identity:
$$
\frac {1} {2i\pi }\int _{\Re e(s)=u}s \widetilde G(s)x^{-s-1}ds=-\frac {\partial } {\partial x}
\left( \frac {1} {2i\pi }\int _{\Re e(s)=u} \widetilde G(s)x^{-s}ds\right)
$$
we obtain:
\begin{equation*}
\frac {1} {2i\pi }\int _{\Re e(s)=u}(2-s) \widetilde G(s)x^{-s}ds=2\tilde g(x)+x\frac {\partial \tilde g} {\partial x}.
\end{equation*}
We consider now the Mellin transform of the right hand side of \eqref{S4EW1}. Since the function $K_0$ is analytic and bounded for $\Re e(s)>1$ and the function $\widetilde G(s)$ is analytic  for $\in \C$  and bounded for $\Re e(s)\in (s_0-\frac {\varepsilon \gamma } {2\pi } , s_0+\frac {(2\pi +\varepsilon)\gamma  } {2\pi })$, the inverse Mellin transform of 
$(K_0(s)-1)\widetilde G(s+\gamma )$ is a well-defined distribution for  $u>1$ and $u+\gamma \in (s_0-\frac {\varepsilon \gamma } {2\pi } , s_0+\frac {(2\pi +\varepsilon)\gamma  } {2\pi })$. Let us then choose $u\in(s_0-\frac {\varepsilon \gamma } {2\pi } , s_0+\frac {\varepsilon \gamma } {2\pi })$. Notice that $1<2<s_0-\frac {\varepsilon \gamma } {2\pi }$ from where $u>1$ too. Then,
\begin{eqnarray*}
 \frac {1} {2i\pi }\int  _{ \Re e(s)=u }(K_0(s)-1)\widetilde G(s+\gamma )x^{-s}ds=
 \frac {1} {2i\pi }\int  _{ \Re e(s)=u }K_0(s)\widetilde G(s+\gamma )x^{-s}ds- \\
 -\frac {1} {2i\pi }\int  _{ \Re e(s)=u }\widetilde G(s+\gamma )x^{-s}ds
\end{eqnarray*}
We first have:
\begin{eqnarray*}
\frac {1} {2i\pi }\int  _{ \Re e(s)=u }\widetilde G(s+\gamma )x^{-s}ds=\frac {1} {2i\pi }\int  _{ \Re e(\sigma )=u+\gamma  }\widetilde G(\sigma  )x^{-\sigma+\gamma   }ds\\
=\frac {x^\gamma } {2i\pi }\int  _{ \Re e(\sigma )=u+\gamma  }\widetilde G(\sigma  )x^{-\sigma}ds.
\end{eqnarray*}
Since $u\in(s_0-\frac {\varepsilon \gamma } {2\pi }, s_0+\frac {\varepsilon \gamma } {2\pi })$, we have
$u+\gamma \in (s_0+\frac {(2\pi -\varepsilon) \gamma } {2\pi } , s_0+\frac {(2\pi +\varepsilon)\gamma  } {2\pi })$ and then 
$$
\frac {x^\gamma } {2i\pi }\int  _{ \Re e(\sigma )=u+\gamma  }\widetilde G(\sigma  )x^{-\sigma}ds=\tilde g(x)
$$
from where
\begin{eqnarray*}
\frac {1} {2i\pi }\int  _{ \Re e(s)=u }\widetilde G(s+\gamma )x^{-s}ds=x^\gamma \tilde g(x).
\end{eqnarray*}

We use now the definition of $K_0$ in terms of $k_0$ to write:
\begin{eqnarray*}
\frac {1} {2i\pi }\int  _{ \Re e(s)=u }K_0(s)\widetilde G(s+\gamma )x^{-s}ds=
\frac {1} {2i\pi }\int  _{ \Re e(s)=u }\int _0^\infty k_0(y)y^{s-1}dy \widetilde G(s+\gamma )x^{-s}ds\\
=\int _0^\infty k_0(y) \left( \frac {1} {2i\pi }\int  _{ \Re e(s)=u } \widetilde G(s+\gamma )\left(\frac {x} {y}\right)^{-s}ds\right)\frac {dy} {y}.
\end{eqnarray*}
Using the same argument and the same choice of $u$ as before:
$$
\frac {1} {2i\pi }\int  _{ \Re e(s)=u } \widetilde G(s+\gamma )\left(\frac {x} {y}\right)^{-s}ds=\left(\frac {x} {y}\right)^{\gamma }\tilde g\left(\frac {x} {y}\right)
$$
from where:
\begin{eqnarray*}
\frac {1} {2i\pi }\int  _{ \Re e(s)=u }K_0(s)\widetilde G(s+\gamma )x^{-s}ds=
x^\gamma \int _0^\infty k_0(y) y^{-\gamma }\tilde g\left(\frac {x} {y}\right)\frac {dy} {y}.
\end{eqnarray*}
Using the change of variable $y=\frac {x} {z}$ we deduce
\begin{eqnarray*}
\frac {1} {2i\pi }\int  _{ \Re e(s)=u }K_0(s)\widetilde G(s+\gamma )x^{-s}ds=
\int _x^\infty k_0\left(\frac {x} {z}\right) z^{\gamma -1}\tilde g(z) dz.
\end{eqnarray*}
This shows  that the function $\tilde g$ satisfies the equation \eqref{eq:g} and proves Lemma \ref{S4E567}.
\end{proof}

We may now proceed to prove Theorem \ref{inv_mellin}.
\\

\begin{proof}[End of the proof of Theorem \ref{inv_mellin}] By Lemma  \ref{S4E567}  the function $g$ satisfies the equation   \eqref{eq:g} and by Lemma \ref{S4L1},  $g\in L^1((x+x^{\gamma +1}))$. Then by the uniqueness Theorem 3.1 of  \cite{EMR05}, p. 110 we deduce that $\tilde g=g$. 
\end{proof}

\begin{proof} [\textbf{Of  Theorem \ref{Well_posedness_inverse_pb}}]
By Proposition  \ref{S4L40} the function  $\widetilde G$ is analytic and bounded on the domain 
 $D=\left\{s\in \C;  \Re  e(s)\in (s_0-\frac {\varepsilon \gamma } {2\pi } , s_0+\frac {(2\pi +\varepsilon)\gamma  } {2\pi })\right\}$ and then, by classical properties of the Mellin transform (cf. Theorem 11.10.1 in \cite{misra1986transform}) $\widetilde G(s)=M[\tilde g](s)$, for $s\in D$. Since by Theorem \ref{inv_mellin}  $g=\tilde g$, we deduce that $G=\widetilde G$ on $D$. It follows in particular that $|G|$ does not vanish on  $D$, and this proves point (i). 
 
We may then divide both terms of equation \eqref{Formula} by $G(s+\gamma )$ to obtain equation \eqref{S2E101}  and this shows the point (ii). 

Since the function {$K_0$ is bounded,  applying Theorem 11.10.1 in~\cite{misra1986transform} we have
$$
k_0(x)=\frac {x^2} {2i\pi } \f{d^2}{dx^2} \int  _{ \Re e(s)=s_0 }\f{x^{-s}}{s^2}K_0(s)ds,
$$
that we write also, {in the sense of distributions}
$$
k_0(x)=\frac {1} {2i\pi }  \int  _{ \Re e(s)=s_0 }{x^{-s}}K_0(s)ds,
$$
and applying point (ii) we get point (iii)}.
\end{proof}


\section{Conclusion}

{In this study, we provided  a first theoretical ground to the question of estimating the function parameters of a pure fragmentation equation from its solution. 
To this purpose, we departed from its self-similar asymptotic profile, along the lines of previous studies carried out for the growth-fragmentation equation~\cite{BDE14,PZ}. }

{We proved two main results: uniqueness for the fragmentation rate and kernel, and a reconstruction formula for the fragmentation kernel 
based on the Mellin transform of the equation. }
{The most delicate point lies in the proof of the reconstruction formula. 
This requires to prove that the Mellin transform of the asymptotic profile does not vanish on a vertical strip of the complex plane - a property far from obvious achieved with the use of the {Cauchy integral} and a careful study of the asymptotic behaviour of the function on vertical lines  of the complex plane.}

{With these results however, the inverse problem of reconstructing the function parameters of the fragmentation equation is far from being solved in practice. 

First, stability of the reconstruction formula (iii) of Theorem~\eqref{Well_posedness_inverse_pb} needs to be studied in an adapted space, and this inverse problem appears as severely ill-posed, as most problems of deconvolution type. { Stability could then lead to error estimates, to take into account the fact that  the asymptotic profile is measured with a certain noise in a certain space.

To go further, it would also be of interest to take into account a convenient  statistical modelling of the measurement noise. A natural one, mimicking the experiments carried out in~\cite{XR13}, would be to assume that at times $t_i,$ the measurement consists in a sample of fibrils, whose sizes $(x_1^i,\cdots x_{n_i}^i)$ are measured. A first assumption would then be, in the spirit of~\cite{DHRR}, to assume that these samples are realizations of i.i.d. random variables, whose density $f(t_i,x)$ satisfies the fragmentation equation. This leads to other difficult and interesting questions in statistics: indeed, the samples are naturally {\it not} independent, but a theoretical justification of this assumption could be investigated, as done for instance in~\cite{Adelaide} for the case of an age-structured process. }


Concerning the fragmentation rate, we shall need a new estimation method, since ours strongly uses the behaviour of the asymptotic profiles for very large sizes, what is out of reach in practice. 
Finally, numerical tests and application to real data shall be carried out in a future work.
} 
%
%

%

\section{Appendices}

\subsection{Behaviour of $K_0(s)$ for   $s\in (0, \infty)$, large.}
\label{app:K0}

The following lemma states that if $k_0$ is continuous in a neighbourhood of $x=1$, the Mellin transform $K_0$
 converges to $0$ as $1/s$ when $\Re e(s)$ goes to $\infty$.
 The proof is a variation on the Laplace method \cite{Rouviere} through the change of variable 
 $ z=e^{-x}.$
\begin{lemma}
\label{meth_laplace}
 Under hypothesis~\eqref{hyp4}:
\begin{equation}
\label{lem:K0}
 K_0(s) = \dfrac{k_0(1)}{s}+ o\left(\dfrac{1}{s}\right), \qquad s\to \infty, \,\, s\in \mathbb{R}.
\end{equation}
If we also assume that, for some $\varepsilon >0$ there exists $r>1$ and $C_r>0$ such that, for all $x\in (1-\varepsilon , 1)$:
$$
|k_0(x)-k_0(1)|\le C_r|\log (x)|^r
$$
then
\begin{equation}
\label{lem:K0BIS}
 K_0(s) = \dfrac{k_0(1)}{s}+ o\left(\dfrac{1}{s^r}\right), \qquad s\to \infty, \,\, s\in \mathbb{R}.
\end{equation}
\end{lemma}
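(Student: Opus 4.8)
The plan is to localise the integral defining $K_0(s)$ near $x=1$, where by Assumption~\eqref{hyp4} the measure $dk_0$ has a continuous bounded density, to discard the rest as exponentially small, and to recognise the localised piece as a Laplace-type integral to which the classical Watson/Laplace estimate applies. Concretely, using~\eqref{hyp4} I would split
$$K_0(s)=\int_{0}^{1-\eps}x^{s-1}\,dk_0(x)+\int_{1-\eps}^{1}x^{s-1}k_0(x)\,dx .$$
On $[0,1-\eps]$ one has $|x^{s-1}|\le(1-\eps)^{s-1}$ and $\int_{0}^{1-\eps}dk_0<\infty$ by~\eqref{S1E2}, so the first term is $O\big((1-\eps)^{s-1}\big)$, hence $o(s^{-N})$ for every $N$. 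In the second term the substitution $x=e^{-t}$ maps $[1-\eps,1]$ onto $[0,\delta]$ with $\delta:=-\log(1-\eps)>0$ and gives
$$\int_{1-\eps}^{1}x^{s-1}k_0(x)\,dx=\int_{0}^{\delta}e^{-st}\,k_0(e^{-t})\,dt ,$$
whose density $t\mapsto k_0(e^{-t})$ is bounded and tends to $k_0(1)$ as $t\to0^+$.

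Next I would prove~\eqref{lem:K0}. Since $\int_{0}^{\delta}e^{-st}\,dt=\frac{1-e^{-s\delta}}{s}=\frac1s+O(e^{-s\delta})$, it is enough to show that $R(s):=\int_{0}^{\delta}e^{-st}\big(k_0(e^{-t})-k_0(1)\big)\,dt=o(1/s)$. Given $\eta>0$, continuity of $k_0$ at $1$ provides $\delta'\in(0,\delta]$ with $|k_0(e^{-t})-k_0(1)|\le\eta$ on $[0,\delta']$; then the part of $R(s)$ over $[0,\delta']$ is bounded in modulus by $\eta/s$, while over $[\delta',\delta]$ the integrand is bounded, so that contribution is $O(e^{-s\delta'}/s)$. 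Hence $\limsup_{s\to\infty}s\,|R(s)|\le\eta$ for every $\eta>0$, i.e. $R(s)=o(1/s)$, and combining the three estimates yields $K_0(s)=k_0(1)/s+o(1/s)$.

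Finally, for~\eqref{lem:K0BIS}, the extra assumption $|k_0(x)-k_0(1)|\le C_r|\log x|^r$ on $(1-\eps,1)$ becomes, after $x=e^{-t}$, the pointwise bound $|k_0(e^{-t})-k_0(1)|\le C_r t^r$ on $(0,\delta)$, so
$$|R(s)|\le C_r\int_{0}^{\delta}e^{-st}t^r\,dt\le C_r\int_{0}^{\infty}e^{-st}t^r\,dt=C_r\,\Gamma(r+1)\,s^{-(r+1)}=o(s^{-r})$$
since $r>1$; together with the exponentially small terms this gives $K_0(s)=k_0(1)/s+o(1/s^r)$. I expect the only genuinely delicate point to be the bookkeeping in the first step — checking that every contribution of $dk_0$ coming from outside a neighbourhood of $x=1$ is exponentially suppressed and therefore harmless — after which the argument reduces to the standard Laplace-method estimate.
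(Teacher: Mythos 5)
Your proof is correct, and it is at heart the same Laplace-method localisation the paper uses, but your implementation is a little different and, I think, cleaner. The paper handles~\eqref{lem:K0} with the substitution $x=y^{1/s}$ (so $s\int_{1-\eps}^1 k_0(x)x^{s-1}dx=\int_{(1-\eps)^s}^1 k_0(y^{1/s})\,dy$) followed by dominated convergence, and then proves~\eqref{lem:K0BIS} by expressing $\int_{1-\eps}^1|\log x|^r x^{s-1}dx$ through the incomplete Gamma function. You instead carry out the substitution $x=e^{-t}$ once and for all, obtaining a genuine Laplace integral $\int_0^\delta e^{-st}k_0(e^{-t})\,dt$; for~\eqref{lem:K0} you split off a small interval in $t$ using continuity, and for~\eqref{lem:K0BIS} you bound directly by $C_r\int_0^\infty e^{-st}t^r\,dt=C_r\Gamma(r+1)s^{-(r+1)}$. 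This is precisely the ``change of variable $z=e^{-x}$'' that the paper announces in the sentence preceding its proof (even though the paper's own proof then takes a different route), and it avoids the incomplete-Gamma bookkeeping entirely; in fact your computation transparently yields the stronger statement $K_0(s)=k_0(1)/s+O(s^{-(r+1)})$, of which $o(s^{-r})$ is an immediate consequence. One small remark: the clause ``since $r>1$'' at the end is not what makes $C_r\Gamma(r+1)s^{-(r+1)}=o(s^{-r})$ hold --- that follows for any $r$ from $s^{-(r+1)}/s^{-r}=s^{-1}\to 0$, with $r>-1$ being all that is needed for $\Gamma(r+1)$ to be finite --- but this does not affect the correctness of the argument. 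The preliminary splitting of $K_0(s)$ and the exponential suppression of the $[0,1-\eps]$ piece are exactly as in the paper and are entirely unproblematic.
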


\begin{proof}
 Let us evaluate 
 the limit as $s \rightarrow \infty $ of the following expression
 
 \begin{equation}
 \label{S6E129}
  s K_0(s) = s\dst\int_0^{1-\eps}k_0(x)x^{s-1}\;dx + s\dst\int_{1-\eps}^{1}k_0(x)x^{s-1}\;dx .
 \end{equation}
We first notice that:
\begin{equation}
\begin{aligned}
\left| s\dst\int_0^{1-\eps}k_0(x)x^{s-1}\;dx \right| &\leq s (1-\eps)^{s-1} \dst\int_0^{1-\eps}k_0(x) dx \\
&\leq 2 s \exp\left((s-1) \ln(1-\eps) \right) \underset{s \rightarrow \infty }{\longrightarrow} 0,
\end{aligned}
\end{equation}
and  so the first term in the right hand side of \eqref{S6E129} goes to zero exponentially fast.  On the other hand,
using the change of variable $x=y^{1/s}$:
\begin{equation}
\begin{aligned}
s\dst\int_{1-\eps}^{1}k_0(x)x^{s-1}\;dx = \dst\int_{(1-\eps)^{\textcolor{black}{s}}}^{1} k_0(y^{1/s}) dy
\end{aligned}
\end{equation}
Since by  \eqref{hyp4} $k_0$ is continuous on $[1-\varepsilon , 1] $, it follows that, for all $y\in (1-\varepsilon , 1)^{s}$:
\begin{eqnarray*}
&&(i)\qquad k_0(y^{1/s})\le \max _{ x\in [1-\varepsilon , 1] } k_0(x)<\infty\\
&&(ii)\qquad  \lim _{ s\to \infty  } k_0(y^{\frac {1} {s}})=k_0(1)\\
&&(iii)\qquad \lim  _{ s\to \infty  } (1-\varepsilon )^s=0
\end{eqnarray*}
and we deduce by the Lebesgue's convergence Theorem:
$$
\lim _{ s\to \infty  }s\dst\int_{1-\eps}^{1}k_0(x)x^{s-1}\;dx= k_0(1).
$$
This shows \eqref{lem:K0}. In order to prove \eqref{lem:K0BIS} we  use that
$$
s\dst\int_{1-\eps}^{1}x^{s-1}\;dx=1-(1-\varepsilon )^s
$$
to write:
\begin{eqnarray*}
\left|s\dst\int_{1-\eps}^{1}k_0(x)x^{s-1}\;dx-k_0(1)\right|\le s\dst\int_{1-\eps}^{1}|k_0(x)-k_0(1)|x^{s-1}\;dx+k_0(1)(1-\varepsilon )^s\\
\le C_rs\dst\int_{1-\eps}^{1}(\log x) ^rx^{s-1}\;dx+k_0(1)(1-\varepsilon )^s.
\end{eqnarray*}
It may be checked that
$$
\int_{1-\eps}^{1}(\log x) ^rx^{s-1}\;dx=s^{-r}\left( \Gamma \left(1+r, -s\log(1-\varepsilon ) \right)-\Gamma (1+r)\right)
$$
and since $s$  $\left( \Gamma \left(1+r, -s\log(1-\varepsilon ) \right)-\Gamma (1+r)\right)$ is bounded as a function of $s$, \eqref{lem:K0BIS} follows.
\end{proof}

\begin{remark}
 If $k_0$  { satisfies \eqref{hyp2}, \eqref{hyp4}, \eqref{hyp3},  and if moreover $k_0$} $\in L^1(0,1)$, the Riemann Lebesgue theorem guarantees us that
 \begin{equation}\label{raj}
K_0(s) \underset{Im(s) \rightarrow {\pm} \infty}{\longrightarrow} 0.
 \end{equation}
If $k_0$ is a general measure, this property may not be true anymore. The measures such that
\eqref{raj} is satisfied are known as the Rajchman measures.
See for example \cite{Rajchman} for a characterization of the Rajchman measures in term 
of what sets they annihilate ({\it i.e.} give measure zero).
\end{remark}

\subsection{The Sokhotsky-Plemelj formula}\label{sec:Sokh}
The Sokhotsky-Plemelj formula {(see a proof in \cite{misra1986transform} page 33)}
is an  identity among distributions which states \comM{in one of its variants} that 
\begin{lemma}[Sokhotsky-Plemelj formula]
For $a\in \R$, 
\begin{equation}
 \lim\limits_{\eps \rightarrow 0 }\;  \dfrac{1}{w-a {e^{\pm i\eps}}}  = P.V.\; \dfrac{1}{w-a}\;  \pm\; i \pi \delta(w-a), \quad \mathcal{D}'(\mathbb{R}), 
\end{equation}
or in other terms, for {a test function $f$} 
\begin{equation}
\lim\limits_{\eps \rightarrow 0 } \dst\int_{0}^{\infty} \dfrac{f(w)}{w-a {e^{\pm i\eps}}} dw = P.V. \dst\int_0^{\infty}\dfrac{f(w)}{w-a} dw 
 \pm\; i \pi f(a), \qquad a \in \mathbb{R}.
\end{equation}
\end{lemma}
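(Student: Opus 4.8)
The plan is to give the classical proof, reducing the distributional identity to elementary estimates on the Cauchy kernel paired with a test function $f$. Since in the applications of Lemma~\ref{S4L5} the relevant $f$ is smooth and decays fast enough at $0$ and at $\infty$, and since for a test function $f\in\mathcal D'(\R)$ one may always reduce to the interesting case by localising, I would work with the integrated form over $(0,\infty)$. First I would observe that it suffices to treat $a>0$: when $a\le 0$ there is no singularity on $(0,\infty)$, the integrand stays bounded near the support of $f$, and both sides reduce to an ordinary convergent integral with no $\delta$ contribution, so the identity follows from dominated convergence.

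Next I would dispose of the difference between $a e^{\pm i\eps}$ and $a\pm i\eps$. For $a>0$ and $\eps\in(0,\pi)$ the point $a e^{i\eps}$ lies in the open upper half-plane and tends to $a$ as $\eps\to 0^+$ (and likewise $a e^{-i\eps}$ tends to $a$ from the lower half-plane). The function $z\mapsto \dst\int_0^\infty \f{f(w)}{w-z}\,dw$ is holomorphic on $\C\setminus\R$, and the point of the argument is precisely that its restriction to the upper half-plane extends continuously up to the real point $a$, with value $\mathrm{P.V.}\dst\int_0^\infty\f{f(w)}{w-a}\,dw + i\pi f(a)$; granting this, the limit along the arc $\eps\mapsto a e^{i\eps}$ coincides with the limit along $\eps\mapsto a+i\eps$. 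Hence it is enough to prove the formula with $a\pm i\eps$ in place of $a e^{\pm i\eps}$.

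Then I would split $\f{1}{w-a-i\eps}=\f{w-a}{(w-a)^2+\eps^2}+i\,\f{\eps}{(w-a)^2+\eps^2}$ and handle the two pieces separately. For the imaginary part, $\f{1}{\pi}\f{\eps}{(w-a)^2+\eps^2}$ is a standard approximate identity (the Cauchy/Poisson kernel): it is nonnegative, has integral $1$ over $\R$, and concentrates at $w=a$, so $\dst\int \f{\eps f(w)}{(w-a)^2+\eps^2}\,dw\to\pi f(a)$. For the real part I would write $f(w)=f(a)+\big(f(w)-f(a)\big)$ on a neighbourhood of $w=a$: the $f(a)$ contribution vanishes by oddness of the kernel $\f{w-a}{(w-a)^2+\eps^2}$ about $w=a$, while $\f{(w-a)(f(w)-f(a))}{(w-a)^2+\eps^2}$ is bounded by $\|f'\|_\infty$ locally and dominated by an integrable function away from $a$, so dominated convergence gives $\mathrm{P.V.}\dst\int\f{f(w)}{w-a}\,dw$. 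Adding the two contributions yields the claimed identity, the sign of the $i\pi f(a)$ term being fixed by the side from which $a e^{\pm i\eps}$ reaches the real axis.

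The only genuinely delicate point is the principal-value limit for the real part — setting up the cancellation cleanly and exhibiting an integrable dominating function — together with the (routine but worth stating) reduction from $a e^{\pm i\eps}$ to $a\pm i\eps$ via the existence of the one-sided boundary value of the holomorphic function above. Both are classical, which is why the surrounding text is content to cite \cite{misra1986transform} (page 33); the argument sketched here is the self-contained version of that reference, specialised to the integrable test functions that actually occur in the proof of Lemma~\ref{S4L5}.
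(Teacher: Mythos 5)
Your proof is correct. The paper itself gives no proof of this lemma --- it only cites \cite{misra1986transform}, p.~33 --- so the only thing to compare against is the standard argument, which is precisely what you reproduce: split $1/(w-a\mp i\eps)$ into the Poisson kernel, an approximate identity contributing $\pm i\pi f(a)$, and the conjugate kernel, which after writing $f(w)=f(a)+(f(w)-f(a))$ near $a$ and using oddness plus dominated convergence yields the principal value.

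Two small remarks. First, your reduction from $ae^{\pm i\eps}$ to $a\pm i\eps$ is phrased in a slightly circular way: the existence of a continuous one-sided boundary value of $z\mapsto\dst\int_0^\infty f(w)(w-z)^{-1}dw$ is essentially the content of the lemma, so it cannot be ``granted'' before the vertical-limit computation is done. A cleaner route is to write $ae^{\pm i\eps}=a+O(\eps^2)\pm i\,a\sin\eps$ and note that the horizontal displacement $a(\cos\eps-1)=O(\eps^2)$ is negligible against the vertical one $a\sin\eps\sim a\eps$, so the same estimates (with $\eps'=a\sin\eps$ and base point $a+O(\eps'^2)$, dominated by the same integrable majorants) give the same limit. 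Second, the claim that for $a\le 0$ ``there is no singularity on $(0,\infty)$'' fails at $a=0$, where the singularity sits at the endpoint of the integration domain and $0\cdot e^{\pm i\eps}=0$ provides no regularisation at all (the correct formula there would carry a factor $1/2$ on the delta term); this is harmless for the paper, since the lemma is only invoked with $a=r>0$, but the edge case should either be excluded or treated separately.
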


\subsection{{Three} auxiliary Lemmas.}\label{sec:append:3lemmas}
The first lemma gives an estimate on $\log(\tilde\varphi(w))$ as $|\zeta |$ goes to $0$ or $+\infty$ with a fixed argument $\theta$.
\begin{lemma}
\label{S4Estlog}[Estimate on $\log(\varphi)$]
Suppose that  
the  determination of the logarithm function is chosen   as in \eqref{S4Elog}.
Then \comM{for $s_0$ large enough}, {and $\zeta \in \C\setminus\R^+$ }:
\begin{equation}
\log (\widetilde \varphi (\zeta ))=\log \left|\log {\vert} \zeta {\vert}  \right|+\mathcal O(1),\,\, \text{ for } \,\,  |\zeta | \to 0^+\text{ or } |\zeta |\to \infty.
\end{equation}
\label{lemma_estimate_log_phi}
\end{lemma}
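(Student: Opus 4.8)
The plan is to transport the statement back to the half-plane $\Re e(s)>2$ through the conformal map $T$, and there to use that $|K_0(s)|$ stays bounded away from $1$ \emph{uniformly} with respect to $\Im m(s)$. Recall from \eqref{S4Etildephi}, \eqref{changevarbles} and \eqref{Formula22} that $\widetilde\varphi(\zeta)=\Phi\bigl(T^{-1}(\zeta)\bigr)$, with $T^{-1}(\zeta)=s_0+\frac{\gamma}{2i\pi}\log\zeta$ and $\Phi(s)=\frac{2-s}{\alpha\gamma(K_0(s)-1)}$. Writing $\zeta=re^{i\theta}$ with the argument $\theta$ held fixed, one computes
$$s:=T^{-1}(\zeta)=s_0+\frac{\gamma\theta}{2\pi}-i\,\frac{\gamma}{2\pi}\log r,$$
so that $\Re e(s)=s_0+\frac{\gamma\theta}{2\pi}$ is a constant, strictly greater than $2$ once $s_0>2$ (which is what the hypothesis ``$s_0$ large enough'' guarantees, also on the slightly enlarged strip $D(\varepsilon)$ on which $P$ is later extended), while $\Im m(s)=-\frac{\gamma}{2\pi}\log r\to+\infty$ as $r=|\zeta|\to0^+$ and $\to-\infty$ as $|\zeta|\to\infty$; in either case $|\Im m(s)|=\frac{\gamma}{2\pi}\left|\log|\zeta|\right|\to\infty$.

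I would then estimate $|\Phi(s)|$ factor by factor. For the numerator, with $u=\Re e(s)$ fixed and $v=\Im m(s)$, one has $|2-s|=\sqrt{(2-u)^2+v^2}=|v|\bigl(1+\mathcal O(v^{-2})\bigr)$, hence
$$\log|2-s|=\log|v|+\mathcal O(1)=\log\left|\log|\zeta|\right|+\mathcal O(1).$$
For the denominator, the key point is that, since $\Re e(s)=u>2$,
$$|K_0(s)|=\left|\int_0^1 x^{s-1}\,dk_0(x)\right|\le\int_0^1 x^{u-1}\,dk_0(x)=K_0(u)<K_0(2)=1,$$
a bound that is \emph{independent of $\Im m(s)$}; therefore $0<1-K_0(u)\le|K_0(s)-1|\le1+K_0(u)<2$, which gives $\log\left|\alpha\gamma\bigl(K_0(s)-1\bigr)\right|=\mathcal O(1)$. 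Combining the two estimates yields $\log|\widetilde\varphi(\zeta)|=\log|\Phi(s)|=\log\left|\log|\zeta|\right|+\mathcal O(1)$.

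To conclude, with the determination \eqref{S4Elog} we have $\arg\widetilde\varphi(\zeta)\in[0,2\pi)$, which is $\mathcal O(1)$, so
$$\log\widetilde\varphi(\zeta)=\log|\widetilde\varphi(\zeta)|+i\,\arg\widetilde\varphi(\zeta)=\log\left|\log|\zeta|\right|+\mathcal O(1),$$
the desired estimate for $\zeta\in\C\setminus\R^+$. The same estimate passes without change to the boundary values $\varphi(r)$ (by continuity of $s\mapsto|2-s|$ and $s\mapsto|K_0(s)-1|$ up to $\Re e(s)=s_0>2$) and to the other sheets of $S$ through the continuation formula \eqref{analytic_cont_P}, whose extra term $\frac{k}{2}\log\varphi(|\zeta|)$ is itself $\mathcal O\bigl(\log\left|\log|\zeta|\right|\bigr)$.

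The one genuinely delicate point is the uniform control of the denominator as $|\Im m(s)|\to\infty$. I emphasise that we do \emph{not} need any Rajchman-type decay $K_0(s)\to0$ as $|\Im m(s)|\to\infty$: the elementary inequality $|K_0(s)|\le K_0(\Re e(s))<1$, valid for $\Re e(s)>2$, already supplies everything, and this is precisely what makes the argument work for a general (possibly non-Rajchman) measure $k_0$.
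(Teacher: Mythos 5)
Your proposal is correct and follows essentially the same route as the paper: pull $\widetilde\varphi$ back to a vertical line $\Re e(s)>2$ via $T^{-1}$, use $|K_0(s)|\le K_0(\Re e(s))<1$ to control the denominator of $\Phi$ uniformly in $\Im m(s)$, and read off $\log|\Phi(s)|=\log|\Im m(s)|+\mathcal O(1)=\log\left|\log|\zeta|\right|+\mathcal O(1)$. If anything, your treatment of the denominator (bounded and bounded away from zero, without any Rajchman-type decay) and of the bounded argument term is slightly more explicit than the paper's.
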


\begin{proof}
By definition, for any $\zeta \in \C {\setminus \R^+}$ {(identified with the first sheet of the Riemann's surface)}, there is a unique $s\in\C$ with $\Re e(s)\in (s_0, s_0+\gamma )$ such that   $\tilde \varphi (\zeta )=\Phi (s)$. 
We have
\begin{equation}
 \Phi (s) = \frac {s-2} {\alpha \gamma (1-\color{black}{K_0(s)})}.
\end{equation}
\color{black}
{Since $K_0(2)=1$ and $k_0$ is supported over $[0,1]$ and satisfies Hyp-4, we have,
}
for any $s$ such that 
$\Re e(s)>2$
\begin{equation*}
 |K_0(s)| \leq K_0(\Re e(s)) <1.
\end{equation*}
Then, for $\Re e(s)>2$ fixed, there is a contant $C$ such that
\begin{equation*}
  \Phi (s) = C s +  O(1),\quad \text{ as } \Im m(s) \to \pm \infty,
\end{equation*}
which is by the definition~\eqref{S4Etildephi} of~$\tilde \varphi$,
\begin{equation*}
 \widetilde{\varphi}(\zeta) = \Phi\left(s_0 +\dfrac{\g}{2i\pi}\log(\zeta) \right)= \dfrac{C\g}{2 i \pi} \log(\zeta) + O(1),\quad \text{ as }|\zeta| \to 0^+,\; |\zeta| \to  \infty.
\end{equation*}
 The expression of  $\log (\widetilde\varphi (\zeta))$ is then given by
\begin{equation}\label{bhv}
\log (\widetilde\varphi (\zeta))=\log \left|\log|\zeta | \right|+{O(1)}, \quad \hbox{as}\,\, |\zeta| \to 0^+\,\,\, \text{and}\,\,|\zeta |\to \infty.
\end{equation}
 \color{black}


This ends the proof of Lemma \ref{lemma_estimate_log_phi}.
\end{proof}

To describe the asymptotic behaviour of $F$ for $|\zeta| \rightarrow 0$ and $|\zeta| \rightarrow + \infty$, 
we need to understand the behaviour of the imaginary part of 
\begin{equation}
\color{black}{\int _0^\infty \log \tilde{\varphi}(w)\left(\frac {1} {w-\zeta }-\frac {1} {w+1} \right)dw
}
\end{equation}

as $|\zeta| \rightarrow 0$ and $|\zeta| \rightarrow + \infty$ {\color{black} {for $\zeta \in D(0)=\C \setminus \R^+$}}. 
\color{black}{This is done through the following two lemmas.

\begin{lemma}\label{Lemma_estimate_bounded_term}
For $\zeta \in D(0)$,
 denoting  $\zeta =re^{i \theta}, \; \theta \in (0,2\pi)$,
\begin{equation}
\Im m\left( \int _0^\infty \left(\frac {1} {w-\zeta }-\frac {1} {w+1} \right)dw\right) = O (1), \qquad |\zeta|\to 0, \; |\zeta|\to \infty.
\end{equation}
\end{lemma}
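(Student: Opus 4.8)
The plan is to evaluate the integral in closed form and then simply read off its imaginary part; no contour argument is needed. Since $\zeta\in D(0)=\C\setminus\R^+$, the integrand has no singularity on $[0,\infty)$, so I would first fix, for $R>0$, the branch of $\log(w-\zeta)$ that is continuous along the horizontal segment $\{\,{-\zeta}+t:\,0\le t\le R\,\}$ and write
\begin{equation*}
\int_0^R\Big(\frac{1}{w-\zeta}-\frac{1}{w+1}\Big)\,dw=\log(R-\zeta)-\log(R+1)-\log(-\zeta),
\end{equation*}
where $\log(R+1)$ denotes the real logarithm. Letting $R\to\infty$, the point $R-\zeta$ runs off to $+\infty$ along the line $\{\Im m=-\Im m\,\zeta\}$; on that line the argument is monotone (because $\Im m(\bar\gamma\gamma')$ is constant along any affine parametrization $\gamma$) and tends to $0$, while $|R-\zeta|/(R+1)\to1$, so $\log(R-\zeta)-\log(R+1)\to0$. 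Hence the integral converges and equals $-\log(-\zeta)$, the branch of $\log(-\zeta)$ being the one inherited from the same ray.

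A short case check on the sign of $\Im m\,\zeta$ then identifies that branch: if $\Im m\,\zeta<0$ then $-\zeta$ lies in the open upper half-plane and the monotone argument decreases from a value in $(0,\pi)$ down to $0$; if $\Im m\,\zeta>0$ it increases from a value in $(-\pi,0)$ up to $0$; and if $\Im m\,\zeta=0$ then $\zeta<0$ so $-\zeta>0$. In every case the relevant value of $\arg(-\zeta)$ is the principal one, i.e. $\log(-\zeta)=\log|\zeta|+i\,\Arg(-\zeta)$ with $\Arg(-\zeta)\in(-\pi,\pi)$. Taking imaginary parts,
\begin{equation*}
\Im m\Big(\int_0^\infty\Big(\frac{1}{w-\zeta}-\frac{1}{w+1}\Big)\,dw\Big)=-\Arg(-\zeta)\in(-\pi,\pi),
\end{equation*}
which is bounded uniformly in $\zeta\in D(0)$, hence in particular is $O(1)$ as $|\zeta|\to0$ or $|\zeta|\to\infty$. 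Writing $\zeta=re^{i\theta}$ with $\theta\in(0,2\pi)$ one even gets $\Arg(-\zeta)=\theta-\pi$, so the imaginary part is identically $\pi-\theta$, which is the value that will feed into the proof of Lemma~\ref{S4L1}.

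The only mildly delicate point is the branch bookkeeping: one must keep the logarithm appearing in $\log(R-\zeta)$ consistent with the segment of integration when passing to the limit, and then verify that the leftover determination of $\log(-\zeta)$ is exactly the principal one. Both reduce to the elementary fact that the argument varies monotonically, with total variation at most $\pi$, along a ray that does not meet the origin, so no genuine obstacle arises and the computation is essentially a one-line integration.
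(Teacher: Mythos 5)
Your proof is correct, and it takes a genuinely different and more direct route than the paper. The paper proves the lemma by a two-case analysis ($|\zeta|\to 0$ and $|\zeta|\to\infty$), splitting the integral at $w=1$ and then again at $w=2r$, invoking dominated convergence for the harmless pieces and a geometric-series expansion of $1/(w-\zeta)$ for the piece near infinity; each fragment is only estimated up to $O(1)$. You instead observe that the integrand has the elementary primitive $\log(w-\zeta)-\log(w+1)$, track the branch of the logarithm along the horizontal segment $\{-\zeta+t\}$ (which avoids the origin precisely because $\zeta\in D(0)=\C\setminus\R^+$), and obtain the closed form $\int_0^\infty\bigl(\frac{1}{w-\zeta}-\frac{1}{w+1}\bigr)dw=-\log(-\zeta)$ with the principal determination, hence the exact identity $\Im m = \pi-\theta$, uniformly in $\zeta$, not merely $O(1)$ in the two asymptotic regimes. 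The branch bookkeeping is handled properly: monotonicity of the continuous argument along an affine path, total variation at most $\pi$, and limit $0$ at $+\infty$, so the leftover determination of $\log(-\zeta)$ is indeed the principal one, giving $\Arg(-\zeta)=\theta-\pi\in(-\pi,\pi)$ for $\theta\in(0,2\pi)$. Your exact value is consistent with the $(\pi-\theta)$ factor that appears in Lemma~\ref{Lemma_estimate_log} and is what feeds into the proof of Lemma~\ref{S4L1}, so nothing is lost and the statement is in fact strengthened; the only thing the paper's longer decomposition buys is that it rehearses, on a simple integrand, the same splitting at $r/2$ and $2r$ that is genuinely needed for the harder integral $I(\zeta)$ with the $\log|\log w|$ weight, where no closed-form primitive is available.
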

\begin{proof}
For $|\zeta|$ small, the Lebesgue dominated convergence theorem guarantees that
\begin{equation*}
\int_1^{+\infty}  \left(\frac {1} {w-\zeta }-\frac {1} {w+1} \right)dw \underset{|\zeta| \to 0}{\to}\int_1^{+\infty}  \left(\frac {1} {w }-\frac {1} {w+1} \right)dw =  O(1), \qquad |\zeta|\to 0. 
 \end{equation*}
Then, we write
\begin{equation*}
\int_0^{1}  \left(\frac {1} {w-\zeta }-\frac {1} {w+1} \right)dw = \int_0^1 \dfrac{dw}{w-\zeta}  -  \int_0^1 \dfrac{dw}{w+1} , 
\end{equation*}
and a straightforward integration gives us 
\begin{equation*}
 \int_0^{1} \frac {1} {w-\zeta} dw= \log(1-\zeta)- \log(-\zeta) = -\log(-\zeta) + O(1), \qquad |\zeta|\to 0,
\end{equation*}
Hence
\begin{equation*}
 \Im m \left( \int _0^\infty \left(\frac {1} {w-\zeta }-\frac {1} {w+1} \right)dw \right) = -\arg(-\zeta) + O(1) = O(1),  \qquad |\zeta|\to 0.
\end{equation*}


For $|\zeta|$ large, 
the Lebesgue dominated convergence theorem guarantees that
\begin{equation*}
\int_0^{1}  \left(\frac {1} {w-\zeta }-\frac {1} {w+1} \right)dw \underset{|\zeta| \to \infty}{\to} - \int_0^{1}  \frac {dw} {w+1} dw =  O(1), \qquad |\zeta|\to \infty. 
\end{equation*}
Then, we write
\begin{equation*}
 \int _1^\infty \left(\frac {1} {w-\zeta }-\frac {1} {w+1} \right)dw=
  \int_1^{2r} \left(\frac {1} {w-\zeta }-\frac {1} {w+1} \right)dw
  +   \int_{2r}^{+\infty} \left(\frac {1} {w-\zeta }-\frac {1} {w+1} \right)dw,
\end{equation*}
where we recall that $r= |\zeta|$. 
We deal with the first term using a straightforward integration
\begin{equation*}
 \int_1^{2r} \left(\frac {1} {w-\zeta }-\frac {1} {w+1} \right)dw = \log(2r- \zeta) - \log(1-\zeta) - \log(2r+1) + \log(2),
\end{equation*}
thus
\begin{equation*}
\Im m\left( \int_1^{2r} \left(\frac {1} {w-\zeta }-\frac {1} {w+1} \right)dw\right) = \arg(2 - e^{i\theta}) - \arg(1- r e^{i\theta}) = O(1) , \quad |\zeta|\to \infty.
\end{equation*}
The second term needs more details. We write
\begin{equation*}
\int_{2r}^{\infty} \left(\frac {1} {w-\zeta }-\frac {1} {w+1} \right)dw 
 = \int_{2r}^{\infty} \dfrac{\zeta}{w(w-\zeta)}dw 
  +  \int_{2r}^{\infty} \dfrac{dw}{w(w-\zeta)}
  -  \int_{2r}^{\infty} \dfrac{\zeta+1}{w(w-\zeta)(w+1)}dw.
\end{equation*}
The Lebesgue theorem gives us 
\begin{equation*}
   \int_{2r}^{\infty} \dfrac{dw}{w(w-\zeta)}
  -  \int_{2r}^{\infty} \dfrac{\zeta+1}{w(w-\zeta)(w+1)}dw \underset{|\zeta| \to \infty}{\to} 0,
\end{equation*}
and we have
\begin{equation*}
\int_{2r}^{\infty} \dfrac{\zeta}{w(w-\zeta)}dw =
\int  _{ 2r }^{\infty} \frac {\zeta } {w^2 \left(1-\frac {\zeta } {w }\right) }dw 
=
\sum _{ k=0 }^\infty  \zeta ^{k+1} \int  _{ 2r }^{\infty}\frac {dw} {w^{k+2} }
= \sum _{ k=1 }^\infty \dfrac{1}{k}\left(\dfrac{e^{i\theta}}{2}\right)^k
 = -\Log\left(1- \dfrac{e^{i\theta}}{2}\right), \end{equation*}
Using the serie expansion
  \begin{equation*}
 \sum_{ k=1 }^\infty \frac {z^k} {k}
=-\Log(1-z),
\end{equation*}
{\color{black} where $\Log(z)$ is the principal determination of the logarithm (taken with $\Arg (z) \in (-\pi,\pi]$)}.
Hence
\begin{equation*}
 \Im m \left( \int _0^\infty \left(\frac {1} {w-\zeta }-\frac {1} {w+1} \right)dw \right) = -\Arg\left(1- \dfrac{e^{i\theta}}{2}\right) + O(1) = O(1),  \qquad |\zeta|\to \infty.
\end{equation*}
This ends the proof of Lemma \ref{Lemma_estimate_bounded_term}.
\end{proof}
The next Lemma gives the asymptotic behaviour of the imaginary part of}
\color{black}
the  following integral $I(\zeta)$
\begin{eqnarray*}
I(\zeta)& = &\int _0^\infty \log \left|\log w  \right|\left(\frac {1} {w-\zeta }-\frac {1} {w+1} \right)dw.
\end{eqnarray*}

\begin{lemma}\label{Lemma_estimate_log}
For $\zeta \in D(0)$,
\comM{ denoting  $\zeta =re^{i \theta}, \; \theta \in (0,2\pi)$
\begin{equation}\label{Im1z1}
\Im m (I(\zeta ))=\log \left|\log (r)\right| \comM{(\pi-\theta)} +\mathcal O\left( 1\right),\,\,\,\text{as  $|\zeta |\to 0$ and $|\zeta |\to \infty$}.
\end{equation}
}
\end{lemma}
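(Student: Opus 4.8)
The plan is to reduce $\Im m(I(\zeta))$ to a Poisson--type integral against the weight $\log|\log w|$, extract the divergent part explicitly, and bound the remainder. Since $w$ is real and $\zeta=re^{i\theta}$ with $\theta\in(0,2\pi)$, one has $\Im m\bigl(\tfrac{1}{w-\zeta}\bigr)=\tfrac{r\sin\theta}{(w-r\cos\theta)^2+r^2\sin^2\theta}=\tfrac{r\sin\theta}{|w-\zeta|^2}$, while $\tfrac{1}{w+1}$ is real, so $\Im m\bigl(I(\zeta)\bigr)=\int_0^\infty\log|\log w|\,\tfrac{r\sin\theta}{|w-\zeta|^2}\,dw$. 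First I would establish the elementary identity
\[
\int_0^\infty\frac{r\sin\theta}{|w-\zeta|^2}\,dw=\pi-\theta,\qquad r>0,\ \theta\in(0,2\pi):
\]
the substitution $w=r\cos\theta+r\sin\theta\,t$ reduces it to $\int\frac{dt}{1+t^2}$, evaluated with $\arctan$ as $\tfrac{\pi}{2}+\arctan(\cot\theta)=\pi-\theta$ (the computation is adapted in the obvious way when $\sin\theta<0$, still giving $\pi-\theta$; the limiting cases $\theta\to0^+$ and $\theta\to2\pi^-$ recover the Sokhotsky--Plemelj mass $\pm\pi$ of Appendix~\ref{sec:Sokh}).

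Next I would split $\log|\log w|=\log|\log r|+\log\bigl|\tfrac{\log w}{\log r}\bigr|$, valid for $w\neq1$ and legitimate because $\log r\neq0$ in both limits $r\to0$ and $r\to\infty$. Together with the identity above this yields
\[
\Im m\bigl(I(\zeta)\bigr)=(\pi-\theta)\log|\log r|+R(\zeta),\qquad R(\zeta):=\int_0^\infty\log\Bigl|\frac{\log w}{\log r}\Bigr|\,\frac{r\sin\theta}{|w-\zeta|^2}\,dw ,
\]
so it remains to prove $R(\zeta)=\mathcal O(1)$ (in fact $o(1)$) as $|\zeta|\to0$ and as $|\zeta|\to\infty$. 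Rescaling $w=rs$ normalises the kernel: setting $K(s):=\frac{\sin\theta}{(s-\cos\theta)^2+\sin^2\theta}$, one has $\int_0^\infty K(s)\,ds=\pi-\theta$, $|K(s)|\le\min(\tfrac{1}{|\sin\theta|},\tfrac{C_\theta}{s^2})$, and $R(\zeta)=\int_0^\infty\log\bigl|1+\tfrac{\log s}{\log r}\bigr|\,K(s)\,ds$, whose integrand tends to $0$ for each fixed $s>0$ as $|\log r|\to\infty$.

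The core of the proof is then the estimate on $R(\zeta)$. On $\tfrac12<s<2$ I would use $|\log(1+x)|\le2|x|$ for $|x|\le\tfrac12$ to bound the contribution by $C/|\log r|$. On $s\ge2$ the bound $|K(s)|\le C_\theta s^{-2}$ together with $0\le\log\bigl|1+\tfrac{\log s}{\log r}\bigr|\le\tfrac{\log s}{\log r}$ (when $\log r>0$) controls the tail; the delicate subcase is $\log r<0$, where $1+\tfrac{\log s}{\log r}$ vanishes at $s=1/r$, and symmetrically the subcase $0<s<\tfrac12$ with $\log r>0$. In those two subcases I would substitute $s=e^{\pm v|\log r|}$ (the sign chosen according to that of $\log r$), which converts the weight $K(s)$ near the bad point into a factor $|\log r|\,e^{-v|\log r|}$; since $\int_0^\infty|\log|1-v||\,|\log r|\,e^{-v|\log r|}\,dv\to0$ as $|\log r|\to\infty$ (split near $v=0$ using $|\log|1-v||\le2v$, near $v=1$ using $e^{-v|\log r|}\le r^{1/2}$ and the local integrability of $\log|1-v|$, and for $v>3/2$ as a tail of $\int t\,e^{-t}\,dt$), this piece vanishes. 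The neighbourhood of the true singularity $w=1$ of $\log|\log w|$ is handled separately: there $\zeta$ stays bounded away from $1$, so $\tfrac{r\sin\theta}{|w-\zeta|^2}=\mathcal O(r)$ or $\mathcal O(1/r)$, and multiplied by the locally integrable $\log|\log w|$ and the bounded correction $\log|\log r|$ it contributes $o(1)$.

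I expect this last estimate to be the main obstacle: the blow-up of $\log\bigl|\tfrac{\log w}{\log r}\bigr|$ at $w=1$ (equivalently, the zero of $1+\tfrac{\log s}{\log r}$ at $s=1/r$) \emph{moves} as $r$ varies, so there is no single integrable majorant of the integrand valid for all $\zeta$ and dominated convergence cannot be invoked directly; one is forced to partition $(0,\infty)$ into a core region $w\sim r$ (where $\log|\log w|-\log|\log r|$ is genuinely $o(1)$), a shrinking neighbourhood of $w=1$, and a far region exhibiting $s^{-2}$ decay after rescaling, estimating each piece by hand.
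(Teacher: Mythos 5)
Your proof is correct, but it follows a genuinely different and substantially shorter route than the paper's. You take the imaginary part at the outset: since $\log|\log w|$ and $\tfrac{1}{w+1}$ are real, only the Poisson-type kernel $\Im m\bigl(\tfrac{1}{w-\zeta}\bigr)=\tfrac{r\sin\theta}{|w-\zeta|^2}$ survives, and its exact total mass
\begin{equation*}
\int_0^\infty\frac{r\sin\theta}{|w-\zeta|^2}\,dw=\pi-\theta ,\qquad \theta\in(0,2\pi),
\end{equation*}
(which I checked, including the case $\sin\theta<0$ and the trivial case $\theta=\pi$) delivers the coefficient $(\pi-\theta)$ for free once you split off $\log|\log r|$. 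The whole burden then falls on showing that the remainder $R(\zeta)=\int_0^\infty\log\bigl|1+\tfrac{\log s}{\log r}\bigr|K(s)\,ds$ is $\mathcal O(1)$, which is an approximate-identity estimate whose only delicacy — correctly identified by you — is the moving zero at $s=1/r$ (i.e.\ $w=1$); your exponential substitution $s=e^{\pm v|\log r|}$ handles it cleanly, and the bounds $|K(s)|\le\min(|\sin\theta|^{-1},C_\theta s^{-2})$ together with $|s-e^{i\theta}|\ge 1-s$ on $(0,\tfrac12)$ make every piece rigorous. The paper instead computes the full asymptotics of $I(\zeta)$, real part included: it splits $(0,\infty)$ into $(0,r/2)$, $(r/2,2r)$, $(2r,\infty)$, expands $\tfrac{1}{w-\zeta}$ in geometric series in the outer regions, invokes the explicit antiderivatives \eqref{app1_1}--\eqref{app1_5} involving the exponential-integral function, treats the middle region by a contour argument with a case split on $\cos\theta\gtrless 2/3$, and only at the very end extracts $\Im m(I)$ via the identity $p(\theta)=i(\pi-\theta)$. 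Your approach buys brevity and avoids all special-function machinery, at the cost of giving no information on $\Re e(I(\zeta))$; since Lemma~\ref{S4L1} uses only $\Im m(I)$ to control $|F(\zeta)|$ (the real part only affects the phase), this loss is immaterial. As in the paper, your constants depend on $\theta$, which is fine because the lemma is applied at fixed $\theta$. I see no gap.
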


The proof of Lemma \ref{Lemma_estimate_log}    uses the  following expressions, for all  $A>0$, {where $\gamma_E$ denotes the Euler's constant}:
\begin{align}
\label{app1_1}
\int _0^{A}\log \left|\log w  \right| w^kdw
&=\frac {-\Psi ((k+1)\log(A))+(A)^{k+1}\log \left|\log (A)  \right|} {k+1}, \qquad k \ge 0,\\
\label{app1_2}
\int_{A}^{1}\log \left|\log w  \right| \frac {dw} {w^{k+1}}
&=\frac {\gamma_E -\Psi (-k\log(A))+\log k+(A)^{-k}\log|\log(A)|} {k}, \qquad k\not =0, \\
\label{app1_3}
\int_{A}^{1}\log \left|\log w  \right| \;\frac {dw} {w}&=-\log(A)(-1+\log|\log(A)|, \\
\label{app1_4}
\int _1^{A}\log \left|\log w  \right| w^kdw
&=\frac {\gamma_E +\log(k+1)-\Psi ((k+1)\log(A))+(A)^{k+1}\log \left|\log (A)  \right|} {k+1}, \qquad k \ge 0,\\
\label{app1_5}
\int_{ A}^{\infty} \log \left|\log w  \right|\frac {dw} {w^{k+2} }
&=\frac {-\Psi (-(k+1)\log (A))+(A)^{-(k+1)}\log|\log(A)|} {k+1}, \qquad k\ge 0.
\end{align}
where 
\begin{equation*}
\Psi (z)=-P.V. \int  _{ -z }^\infty \frac {e^{-t}dt} {t}, \qquad \Psi \left( \dfrac{1}{x} \right) \underset{x\to 0}{\sim} \exp\left( \frac{1}{x}\right) \left( x +O(x^2) \right).
\end{equation*}

\subsubsection*{Proof of Lemma \ref{Lemma_estimate_log}}

Lemma \ref{Lemma_estimate_log} is describing the asymptotic behaviour of the imaginary part of $I(\zeta)$ as 
$|\zeta |\to 0$ and $|\zeta |\to + \infty$, with $\arg(\zeta) \equiv \theta\in (0, 2\pi )$, {{and extends easily then to $\arg(\zeta)=0$}}.

  {\bf \boxed{Step\,\, I.} Limit as $|\zeta |\to 0$.}
We split the integral $I(\zeta)$ in three terms
\begin{equation}
\label{S6E500}
\begin{aligned}
I(\zeta )=I _{ 1 }(\zeta )+I _{ 2 }(\zeta )+I _{  3 }(\zeta ),\\
I_{1}(\zeta )=\int _0^1 \log \left|\log w  \right|\frac {dw} {w-\zeta }, 
\qquad &I_{2}=-\int _0^1 \log \left|\log w  \right|\frac {dw} {w+1}=\frac {(\log 2)^2} {2}, \\
 &I_{3}(\zeta )=\int _1^\infty \log \left|\log w  \right|\left(\frac {1} {w-\zeta }-\frac {1} {w+1} \right)dw.\\
\end{aligned}
\end{equation}
We first notice that, by Lebesgue's convergence theorem $I_{3}(\zeta)$ converges towards a finite real limit:
 \begin{eqnarray}
 \label{S6E501}
\lim _{ |\zeta |\to 0 }I _{  3 }(\zeta )= \int _1^\infty\frac {\log \left|\log w  \right|} {w(w+1)}dw,
\end{eqnarray}
so that the behaviour of $I$ is dominated by the behaviour of $I_{1}(\zeta)$. We  cut it into three pieces:
\begin{eqnarray*}
{I_1}(\zeta )
&=&\int _0^{r/2} \log \left|\log w  \right| \frac {dw} {w-\zeta }
+\int_{ r/2 }^{2r} \log \left|\log w  \right| \frac {dw} {w-\zeta }+\int  _{ 2r }^1 \log \left|\log w  \right| \frac {dw} {w-\zeta }\\ \\
&=&{I_{1,1}(\zeta )\quad+\quad I_{1,2}(\zeta )\quad+\quad I_{1,3}(\zeta )}.
\end{eqnarray*}

{\subsubsection*{Study of the first integral $I_{1,1}(\zeta )=\int _0^{r/2} \log \left|\log w  \right| \frac {dw} {w-\zeta }$}}

Since in the first integral $0<w<|\zeta |/2$, we may write
\begin{eqnarray*}
\int _0^{r/2} \log \left|\log w  \right| \frac {dw} {w-\zeta }=\int _0^{r/2} \log \left|\log w  \right| \frac {dw} {\zeta (\frac {w } {\zeta }-1)}
&=&-\sum_{ k=0 }^\infty \zeta ^{-(k+1)}\int _0^{r/2}\log \left|\log w  \right| w^kdw. 
\end{eqnarray*}
Let us recall that we defined $r:= |\zeta|$. Using Formula \eqref{app1_1} for $k \geq 0$ {\color{black} and $A=\f{r}{2}$}, we have
\begin{equation}
\begin{aligned}
\int _0^{r/2}\log \left|\log w  \right| w^kdw
&=\frac {r^{k+1}\log \left|\log (r/2)  \right|} {(k+1)2^{k+1}}-\\
&-
\left(\frac {r} {2}\right)^{k+1}\left(\frac {1} {(k+1)^2\log(r/2)}+\mathcal O\left(\frac {1} {(k+1)^3\log(r/2)^2} \right) \right),
\end{aligned}
\end{equation}
as $|\zeta |\to 0$,  then 
\begin{equation*}
\begin{aligned}
\int _0^{r/2} \log \left|\log w  \right|& \frac {dw} {w-\zeta }=-\log \left|\log (r/2)  \right|\sum_{ k=0 }^\infty \left(\frac {|\zeta |} {\zeta }\right)^{k+1}\frac {1} {(k+1)2^{k+1}}+\\
&+\frac {1} {\log(r/2)}\sum_{ k=0 }^\infty \left(\frac {|\zeta |} {\zeta }\right)^{k+1}
\frac {1} {2^{k+1}(k+1)^2}\left(1+\mathcal O\left(\frac {1} {(k+1)\log(r/2)} \right) \right),\,\,|\zeta |\to 0.
\end{aligned}
\end{equation*}
Using the serie expansion
  \begin{equation*}
 \sum_{ k=1 }^\infty \left(\frac {\zeta } {2 |\zeta |} \right)^k\frac {1} {k}
=-{\color{black}\Log}\left(1-\frac {\zeta } {2|\zeta |} \right)
\end{equation*}
{\color{black} where $\Log(z)$ is the principal determination of the logarithm (taken with $\Arg (z) \in (-\pi,\pi]$)}
\begin{equation}\label{C1}
\comM{I_{1,1}(\zeta)=} \int _0^{r/2} \log \left|\log w  \right| \frac {dw} {w-\zeta }=\log \left|\log (r/2)  \right| {\color{black}\Log}\left(1-\frac {|\zeta |} {2\zeta } \right)+
\mathcal O\left(\frac {1} {|\log r|} \right),\,\,|\zeta |\to 0.
\end{equation}
{\subsubsection*{Study of the third integral $I_{1,3}(\zeta )=\int _{2r}^{1} \log \left|\log w  \right| \frac {dw} {w-\zeta }$}}

Similarly, using that $2|\zeta |<w$ in the third integral, we write:
\begin{eqnarray*}
\int _{2r}^{1} \log \left|\log w  \right| \frac {dw} {w-\zeta }=\int _{2r}^{1} \log \left|\log w  \right| \frac {dw} {w (1-\frac {\zeta  } {w })}
&=&\sum_{ k=0 }^\infty \zeta ^{k}\int _{2r}^{1}\log \left|\log w  \right| \frac {dw} {w^{k+1}}, 
\end{eqnarray*}
from where, using formula \eqref{app1_2} for $k\not =0$ {\color{black} and $A=2r$}:
\begin{eqnarray*}
\int _{2r}^{1}\log \left|\log w  \right| \frac {dw} {w^{k+1}}&=&\frac {(2r)^{-k}\log|\log(2r)|} {k}+\frac {\gamma_E +\log k} {k}+\frac {(2r)^{-k}} {k^2\log(2r)}\left(1+\mathcal O\left(\frac {1} {k\log(2r)} \right) \right),
\end{eqnarray*}
as $|\zeta |\to 0$, and then
 \begin{eqnarray*}
 \int _{2r}^{1} \log \left|\log w  \right| \frac {dw} {w-\zeta }&=&-\log(2r)(-1+\log(-\log(2r))+\log|\log(2r)|\sum_{ k=1 }^\infty \left(\frac {\zeta } {2 |\zeta |} \right)^k\frac {1} {k}+\\
 &&+\sum_{ k=1 } ^\infty\zeta ^k\left\{\frac {\gamma_E +\log k} {k}
 +\frac {(2r)^{-k}} {k^2\log(2r)}\left(1+\mathcal O\left(\frac {1} {-k\log(2r)} \right) \right)\right\}, \,\,|\zeta |\to 0.
 \end{eqnarray*}

we obtain:
\begin{equation}\label{C2}
\begin{aligned}
\int _{2r}^{1} \log \left|\log w  \right| \frac {dw} {w-\zeta }=-\log(2r)(-1+\log(-\log(2r))
-\log|\log(2r)|{\color{black}\Log}\left(1-\frac {\zeta } {2|\zeta |} \right)-\\
-\gamma_E {\color{black}\Log}(1-\zeta ) +\sum_{ k=1 } ^\infty\zeta ^k\left\{\frac {\log k} {k}
+\frac {(2r)^{-k}} {k^2\log(2r)}\left(1+\mathcal O\left(\frac {1} {k\log(2r)} \right) \right)\right\} ,\,\,|\zeta |\to 0.
\end{aligned}
\end{equation}
\comM{
which implies
\begin{equation}\label{C2}
\begin{aligned}
I_{1,3}(\zeta)=\int _{2r}^{1} \log \left|\log w  \right| \frac {dw} {w-\zeta }&=-\log(2r)(-1+\log(-\log(2r))
\\& -\log|\log(2r)|{\color{black}\Log}\left(1-\frac {\zeta } {2|\zeta |} \right)
+ \mathcal O\left(\frac {1} {|\log(r)|} \right).
\end{aligned}
\end{equation}
}
{\subsubsection*{Study of the second integral $I_{1,2}(\zeta )=\int _{r/2}^{2r} \log \left|\log w  \right| \frac {dw} {w-\zeta }$}}

The argument for the second integral is slightly different. We first  make the change of coordinates $w=rx$, $r=|\zeta |$, and obtain
\begin{eqnarray}
\int _{r/2}^{2r} \log \left|\log w  \right|\frac {dw} {w-\zeta }&=&\int _{1/2}^{2} \log \left|\log r x  \right|\frac {dx} {x-\frac {\zeta } {|\zeta |}}=\int _{1/2}^{2} \log \left|\log r +\log x  \right|\frac {dx} {x-\frac {\zeta } {|\zeta |}}\nonumber\\
  &=& \int _{1/2}^{2} \log \left|\log r\left(1+\frac{\log x}{\log r}\right)  \right|\frac {dx} {x-\frac {\zeta } {|\zeta |}}\nonumber\\
 &=& \int _{1/2}^{2}\log \left|\log r\right| \frac {dx} {x-\frac {\zeta } {|\zeta |}}
  +\int _{1/2}^{2} \log \left|\left(1+\frac{\log x}{\log r}\right)  \right|\frac {dx} {x-\frac {\zeta } {|\zeta |}}\nonumber\\
&=&\log \left|\log r \right| \left(\Log\left(2-\frac {\zeta } {|\zeta |}\right)-\Log\left(\frac {1} {2}-\frac {\zeta } {|\zeta |}\right)\right)+\nonumber \\
&&\hskip 4cm +\int _{1/2}^{2} \log \left|\left(1+\frac{\log x}{\log r}\right)  \right|\frac {dx} {x-\frac {\zeta } {|\zeta |}}.\label{S6E245}
\end{eqnarray}
Since we want to consider values of the argument of $\theta$ in the interval {\color{black}$(0, 2\pi) $},  the denominator  $x-\frac {\zeta } {|\zeta |}$ may then be {\color{black} close to} zero for $x=1$.  
Suppose then that $\arg (\zeta )=\theta$.
We will consider separately the case where $\cos\theta$ is close to one and the case where $\cos\theta$ is bounded away from one. Let us consider first the case where  $\cos\theta \ge 2/3$. We use  the change of variables $y=x-e^{i\theta}$ and obtain the expression
\begin{eqnarray}
\label{S6E123}
\int _{1/2}^{2} \log \left|\left(1+\frac{\log x}{\log r}\right)  \right|\frac {dx} {x-e^{i\theta}}=
\int _{\Gamma _\theta}\log \left|\left(1+\frac{\log (y+e^{i\theta})}{\log r}\right)  \right|\frac {dy} {y}
\end{eqnarray}
where
$$
\Gamma _\theta=\left\{y\in \C; \Re e (y)\in \left(\frac {1} {2}-\cos \theta, 2-\cos \theta\right),\,\,\Im m (y)=-\sin \theta \right\}.
$$
This integral may be written as follows
\begin{eqnarray}
\label{S6EVP}
\int _{\Gamma _\theta}\log \left|\left(1+\frac{\log (y+e^{i\theta})}{\log r}\right)  \right|\frac {dy} {y}=
\lim _{ \delta  \to 0 }\int _{\Delta _\theta(\delta  )}\log \left|\left(1+\frac{\log (y+e^{i\theta})}{\log r}\right)  \right|\frac {dy} {y}
\end{eqnarray}
where
$$
\Delta _\theta(\delta  )=\Gamma _\theta \setminus \left\{y\in  \Gamma _\theta; \,\, \Re e (y)\in (-\delta  , \delta  )\right\}.
$$
Define finally the sets
\begin{eqnarray*}
\Sigma _\theta (\delta  )=\left\{ y\in \Delta _\theta(\delta  ); \,\, \Re e (y)\in ({\color{black} 1/2-\cos\theta}, {\color{black}-1/2+\cos\theta })\right\}\\
\mathcal Q =\left\{y\in \C; \comM{\Re e (y)\in  (1/2-\cos\theta, -1/2+\cos\theta) },\,\Im m (y)\in (-1, 1)  \right\}.
\end{eqnarray*}
Notice that $\Sigma _\theta (\delta  )\subset \mathcal Q$. 
Due to the symmetry of $\Sigma _\theta (\delta  )$ with respect to the line $\Re e (y)=0$ we have:
\begin{equation}
\begin{aligned}
\int _{ \Sigma _\theta (\delta  ) }\frac {dy} {y}&= \Log\left(\dfrac{1}{2} - \cos\theta - i \sin\theta \right)-\Log\left(-\dfrac{1}{2} + \cos\theta - i \sin\theta \right) + \\
&\hspace{3cm}+ \Log\left(-\delta-i\sin\theta\right)-\Log\left(\delta-i\sin\theta\right)  \\
&= i\left(2 \Arg \left( \dfrac{1}{2}-e^{i\theta}\right)    -2\Arg(\delta -i\sin\theta)\right) ,\quad \delta  >0.
\end{aligned}
\end{equation}
Then,
\begin{equation}
\begin{aligned}
\int _{\Sigma _\theta (\delta  )}\log \left|\left(1+\frac{\log (y+e^{i\theta})}{\log r}\right)  \right|\frac {dy} {y}&=
\int _{\Sigma _\theta (\delta  )}\Big(\log \left|\left(1+\frac{\log (y+e^{i\theta})}{\log r}\right)  \right|\\
&-\log \left|\left(1+\frac{\log (e^{i\theta})}{\log r}\right)  \right|\Big)\frac {dy} {y}
+ i \nu , 
\end{aligned}
\end{equation}
with \begin{equation*}
      \nu = \log \left|\left(1+\frac{\log (e^{i\theta})}{\log r}\right)  \right| \left(2  \Arg \left( \dfrac{1}{2}-e^{i\theta}\right) - 2\Arg(\delta-i\sin\theta) \right).
     \end{equation*}

\comM{Since $cos(\theta)> 2/3$, for $y\in \mathcal Q$, $\log (y+e^{i\theta})$ is bounded away from zero so that}
we can define $r_0$ small enough in order to have:
\begin{eqnarray*}
\left|\frac{\log (y+e^{i\theta})}{\log r_0}\right|<1/2,\,\,\,\forall y\in \mathcal Q.
\end{eqnarray*}

\comM{Then,} for  all $y\in \mathcal Q$,
$$
\left|1+\frac{\log (y+e^{i\theta})}{\log r}\right|\ge 1-\left|\frac{\log (y+e^{i\theta})}{\log r}\right|>1-\left|\frac{\log (y+e^{i\theta})}{\log r_0}\right|>1/2.
$$
\comM{
We denote by 
\begin{equation*}
 v(\tau) = \log \left|\left(1+\frac{\log (\tau y+e^{i\theta})}{\log r}\right)  \right|,\quad v'(\tau )=\Re e\Bigg( \frac {y} {({\color{black}\tau}y+e^{i\theta})\log r \left(1+\frac{\log ({\color{black}\tau}y+e^{i\theta})}{\log r}\right) } \Bigg),
\end{equation*}
so that we use the mean value theorem to write  $v(1) -v(0) =v'(\tau) $ for some $\tau \in [0,1]$, i.e,
\begin{equation*}
\begin{aligned}
\log \left|\left(1+\frac{\log (y+e^{i\theta})}{\log r}\right)  \right|-\log \left|\left(1+\frac{\log (e^{i\theta})}{\log r}\right)  \right|
&=\Re e\Bigg( \frac {y} {({\color{black}\tau}y+e^{i\theta})\log r \left(1+\frac{\log ({\color{black}\tau}y+e^{i\theta})}{\log r}\right) } \Bigg)
\\&\leq 
\dfrac{|y|}{|\log(r)||\tau y +e^{i \theta}|\Big|1+ \dfrac{\log(\tau y+e^{i \theta})}{\log(r)} \Big|}.
\end{aligned}
\end{equation*}
}
For all $y\in \Gamma  _{ \theta }(\delta  )$,
\comM{
\begin{eqnarray*}
|\tau y +e^{i\theta}|\ge \tau \Re e (y)+\cos\theta\ge 
\tau \left(\dfrac{1}{2}-\cos\theta \right) +\cos\theta
 \ge- \dfrac{\tau}{2} + \dfrac{2}{3} \ge \dfrac{1}{6},
\end{eqnarray*}
\comM{and since} $\tau y  \in \mathcal Q$, we obtain {\color{black} for $r$ small enough}
\begin{eqnarray*}
\left|\log \left|\left(1+\frac{\log (y+e^{i\theta})}{\log r}\right)  \right|-\log \left|\left(1+\frac{\log (e^{i\theta})}{\log r}\right)  \right|\right|\le
\frac {12 |y|} {|\log r|},
\end{eqnarray*}
}
from where it  follows that, for all $\delta  >0$:
\begin{equation}
\begin{aligned}
\Big|\int _{\Sigma _\theta (\delta  )}&\log \left|\left(1+\frac{\log (y+e^{i\theta})}{\log r}\right)  \right|  \frac {dy} {y}\Big|\\
&\le 
\int _{\Sigma _\theta (\delta  )}\left|\log \left|\left(1+\frac{\log (y+e^{i\theta})}{\log r}\right)  \right|-\log \left|\left(1+\frac{\log (e^{i\theta}}{\log r}\right)  \right|\right|\frac {dy} {|y|} \comM{+ |\nu|} \nonumber\\
&\le \frac {\comM{12}} {|\log r|}\int _{\Sigma _\theta (\delta  )}dy \comM{+ 4 \pi\log  \left|\left(1+\frac{\log (e^{i\theta})}{\log r}\right)  \right| } 
\\
&=\frac{\comM{12}(2\cos\theta-1-2\delta ) }{|\log r|} \comM{+ 4 \pi\log  \left|\left(1+\frac{\log (e^{i\theta})}{\log r}\right)  \right| } .  \label{S6E124}
\end{aligned}
\end{equation}
On the other hand, for all $y\in \Delta _\theta (\delta  )\setminus \Sigma  _{ \theta }(\delta  )$, 
\begin{eqnarray*}
|y|\ge |\Re e (y)|\ge \cos \theta -1/2\ge 1/6.
\end{eqnarray*}
\begin{eqnarray}
\left|\int\limits _{\Delta _\theta (\delta  )\setminus \Sigma  _{ \theta }(\delta  )}\log \left|\left(1+\frac{\log (y+e^{i\theta})}{\log r}\right)  \right|\frac {dy} {y}\right|\le 6\left|\log \left(1-\frac{\log (2)}{\log r}\right)  \right|\int  _{ \Gamma _\theta }|dy|\nonumber\\
=\comM{9}\left|\log \left(1-\frac{\log (2)}{\log r}\right)  \right|. \label{S6E125}
\end{eqnarray}
Since by definition of $\Delta _\theta(\delta )$ and $\Sigma  _{ \theta }(\delta )$:
\begin{eqnarray*}
\int _{\Delta _\theta(\delta  )}\log \left|\left(1+\frac{\log (y+e^{i\theta})}{\log r}\right)  \right|\frac {dy} {y}
=\int _{\Sigma  _\theta(\delta  )}\log \left|\left(1+\frac{\log (y+e^{i\theta})}{\log r}\right)  \right|\frac {dy} {y}+\\
+\int _{\Delta _\theta(\delta  )\setminus \Sigma  _\theta(\delta  )}\log \left|\left(1+\frac{\log (y+e^{i\theta})}{\log r}\right)  \right|\frac {dy} {y}
\end{eqnarray*}
we deduce from \eqref{S6E124} and \eqref{S6E125} that for all $ \delta >0$
\begin{equation}
\begin{aligned}
\left|\int _{\Delta _\theta(\delta  )}\log \left|\left(1+\frac{\log (y+e^{i\theta})}{\log r}\right)  \right|\frac {dy} {y}\right|\le 
\frac{12(2\cos\theta-1)}{|\log r|}&+\comM{9}\left|\log \left(1-\frac{\log (2)}{\log r}\right) \right| \\
&\comM{+ 4 \pi\log  \left|\left(1+\frac{\log (e^{i\theta})}{\log r}\right)  \right| } ,
\end{aligned}
\end{equation}
and then, by \eqref{S6EVP} and \eqref{S6E123}:
\begin{equation}
\label{S6E140}
\int _{1/2}^{2} \log \left|\left(1+\frac{\log x}{\log r}\right)  \right|\frac {dx} {x-e^{i\theta}}=\mathcal O\left(\frac {1} {|\log r|}\right),\,\,\hbox{as}\,\,\,r\to 0.
\end{equation}

Suppose now that $\cos\theta<2/3$. In that case  
\begin{equation*}
 \comM{\left|x-e^{i\theta}\right|= \sqrt{(\Re e(x)) ^2-2\Re e(x) \cos\theta +1}\ge \sqrt{(\Re e(x)-2/3) ^2+5/9} \ge \sqrt 5 /3,}
\end{equation*}

and the denominator of the integral is bounded away from zero. 
Let $r_1$ be small enough in order to have:
$$
\left|\frac{\log x}{\log r_1}\right|<1/2,\,\,\forall x\in (1/2, 2).
$$
Then, for all $r<r_1$:
$$
 \left|1+\frac{\log x}{\log r}\right|\ge 
1-\left|\frac{\log x}{\log r}\right|\ge 1-\left|\frac{\log x}{\log r_1}\right|>1/2,\,\,\forall x\in (1/2, 2)
$$
\comM{By the mean value theorem applied to}
\begin{equation*}
 \color{black}{f(\tau)=\log\left(1+\tau \f{\log x}{\log r}\right),
 \quad f'(\tau)=\f{\log x}{\log r}\f{1}{1+\tau\f{\log x}{\log r}},
 }
\end{equation*}
\comM{we have for some $\tau \in [0,1]$}
\begin{equation*}
 \comM{
\left| \log\left|1+\frac{\log x}{\log r}\right| \right|=
\left|\f{\log x}{\log r}\f{1}{1+\tau\f{\log x}{\log r}}\right|
\leq \f{2\log(2)}{\vert \log(r)\vert} 
,
}\end{equation*}

and we deduce that for all $\theta$ such that $\cos\theta<2/3$:
\begin{equation}
\label{S6E141}
\left|\int _{1/2}^{2} \log \left|\left(1+\frac{\log x}{\log r}\right)  \right|\frac {dx} {x-e^{i\theta}}\right|
\le \frac {\comM{{\color{black}9  \log 2}}} {\comM{\sqrt{5}}|\log r|}
=\mathcal O\left(\frac {1} {|\log r|}\right),\,\,\hbox{as}\,\,\,r\to 0.
\end{equation}

As a consequence, by \eqref{S6E245}:
\begin{equation}\label{C4}
\begin{aligned}
\comM{I_{1,2}(\zeta)} &= \int _{r/2}^{2r} \log \left|\log w  \right|\frac {dw} {w-\zeta }\\
&=\log \left|\log r \right| \left(Log\left(2-\frac {\zeta } {|\zeta |}\right)-Log\left(\frac {1} {2}-\frac {\zeta } {|\zeta |}\right)\right)+\mathcal O\left(\frac{1}{\log|\zeta |}\right),\,\,|\zeta |\to 0.
\end{aligned}\end{equation}

{\subsubsection*{Summing the contributions $I(\zeta)=I_2(\zeta)+I_3(\zeta)+I_{1,1}(\zeta)+I_{1,2}(\zeta)+I_{1,3}(\zeta)$}}

Using   \eqref{S6E500} and adding  \eqref{S6E501}, \eqref{C1}, \eqref{C2} and \eqref{C4} we obtain:
 \begin{eqnarray}
 \label{I1aux1}
&& I _{ }(\zeta )= \frac {(\log 2)^2} {2} + \int_1^\infty\frac {\log \left|\log w  \right|} {w(w+1)}dw+\log \left|\log (r/2)  \right|\Log\left(1-\frac {\comM{|\zeta|}} {2\comM{\zeta}} \right)\nonumber\\
&&-\log(2r)(-1+\log|\log(2r)|) -\log|\log(2r)|\Log\left(1-\frac {\comM{\zeta}} {2\comM{|\zeta|}} \right)\nonumber\\
 &&+\log \left|\log r \right| \left(\Log\left(2-\frac {\zeta } {|\zeta |}\right)-\Log\left(\frac {1} {2}-\frac {\zeta } {|\zeta |}\right)\right)
 +\mathcal O\left(\frac {1} {\log|\zeta |}\right),\,\,\,|\zeta |\to 0.
 \end{eqnarray}
Using
\begin{equation}
\label{S6E289}
 \log \left|\log (r/2)  \right|=
 \log \left|\log r \right|+\mathcal O\left(\comM{1}\right), \quad \log \left|\log (2r)  \right|=
 \log \left|\log r \right|+\mathcal O\left(\comM{1}\right),\,\,\,|\zeta |\to 0
\end{equation}
the expression in \eqref{I1aux1} may be simplified to:
  \begin{eqnarray*}
&& I(\zeta )= \frac {(\log 2)^2} {2} + \int _1^\infty\frac {\log \left|\log w  \right|} {w(w+1)}dw+
\log \left|\log (r)  \right|\Log\left(1-\frac {|\zeta |} {2 \zeta } \right)\nonumber\\
&&-\log(r)(-1+\log|\log(r)|) -\log|\log(r)|\Log\left(1-\frac {\zeta } {2|\zeta |} \right)\nonumber\\
 &&+\log \left|\log r \right| \left(\Log\left(2-\frac {\zeta } {|\zeta |}\right)-\Log\left(\frac {1} {2}-\frac {\zeta } {|\zeta |}\right)\right)
 +\mathcal O\left(\comM{1}\right),\,\,\,|\zeta |\to 0.
 \end{eqnarray*}
 and
 \begin{eqnarray*}
&& I(\zeta )= \frac {(\log 2)^2} {2} + \int _1^\infty\frac {\log \left|\log w  \right|} {w(w+1)}dw-\log(r)(-1+\log|\log(r)|) +\nonumber\\
&&+\log|\log(r)| \left(\Log\left(1-\frac {|\zeta| } {2\zeta } \right)-\Log\left(1-\frac {\zeta } {2 |\zeta| } \right)+\right.\nonumber\\
 &&\left. +\Log\left(2-\frac {\zeta } {|\zeta |}\right)-\Log\left(\frac {1} {2}-\frac {\zeta } {|\zeta |}\right)\right)
 +\mathcal O\left(\comM{1}\right),\,\,\,|\zeta |\to 0.
 \end{eqnarray*}
Since:
$$
\Log\left(2-\frac {\zeta } {|\zeta |}\right)=\Log (2)+ \Log\left(1-\frac {\zeta } {2|\zeta |}\right)
$$
we first obtain:
\begin{eqnarray*}
\Log\left(1-\frac {|\zeta| } {2\zeta } \right)-\Log\left(1-\frac {\zeta } {2 |\zeta| } \right)+
\Log\left(2-\frac {\zeta } {|\zeta |}\right)-\Log\left(\frac {1} {2}-\frac {\zeta } {|\zeta |}\right)=\\
= \Log\left(1-\frac {|\zeta| } {2\zeta } \right)+\Log (2)-\Log\left(\frac {1} {2}-\frac {\zeta } {|\zeta |}\right).
\end{eqnarray*}
We use now:
\comM{
\begin{equation*}
\begin{aligned}
 {\color{black} p(\theta)}=\Log\left(1-\frac {|\zeta| } {2\zeta } \right)-\Log\left(\frac {1} {2}-\frac {\zeta } {|\zeta |}\right)
 &= \Log\left|\frac {1-\frac {|\zeta |} {2\zeta }} {\frac {1} {2}-\frac {\zeta } {|\zeta |}} \right|
+ i \left(\Arg\left(1-\frac {|\zeta| } {2\zeta } \right) - \Arg \left(\frac {1} {2}-\frac {\zeta } {|\zeta |}\right)\right)\\ 
{\color{black} p(\theta)}&=  i \left(\Arg\left(1-\frac {|\zeta| } {2\zeta } \right) - \Arg \left(\frac {1} {2}-\frac {\zeta } {|\zeta |}\right)\right) \\
& = i (\pi-\theta).
\end{aligned}
\end{equation*}}
{\color{black} Indeed, we have by definition $\zeta=\vert \zeta\vert e^{i\theta}$ with $\theta\in (0,2\pi).$ Let us denote $$\beta=1-\f{\vert \zeta\vert}{2\zeta}=1-\f{1}{2}e^{-i\theta}=1-\f{1}{2}\cos(\theta) +\f{i}{2}\sin(\theta)=\vert\beta\vert e^{i\alpha},\qquad \alpha\in (-\f{\pi}{6},\f{\pi}{6}).$$ We have
$$p(\theta)=i\left(\Arg(\beta) - \Arg \left(-\beta e^{i\theta} \right)\right)=i\left(\alpha - \Arg (e^{i(\theta -\pi+\alpha)})\right).$$
For $\theta\in (0,\pi)$ we have $\alpha\in (0,\f{\pi}{6})$, so that $\theta-\pi+\alpha \in (-\pi,\f{\pi}{6})$. For $\theta \in (\pi,2\pi)$ we have $\alpha\in (-\f{\pi}{6},0)$ so that $\theta-\pi+\alpha \in (-\f{\pi}{6},\pi).$ In both cases, we thus have $\theta-\pi+\alpha\in (-\pi,\pi),$ so that
$$p(\theta)=i\left(\alpha-\Arg (e^{i(\theta -\pi+\alpha)})\right)=i(\alpha-(\theta-\pi+\alpha))=\comM{i(}\pi-\theta).$$}

\comM{We deduce }
\begin{eqnarray*}
I(\zeta)=\frac {(\log 2)^2} {2} + \int _1^\infty\frac {\log \left|\log w  \right|} {w(w+1)}dw-\log(r)(-1+\log|\log(r)|)
+\log \left|\log (r)  \right|\log 2+\\
+i\log \left|\log (r)\right| \comM{(\pi- \theta)}+\mathcal O\left(\comM{1}\right),\,\,\,|\zeta |\to 0
\end{eqnarray*}

$$
\Im m (I(\zeta ))=\log \left|\log (r)\right| \comM{(\pi- \theta)} +\mathcal O\left(\comM{1}\right),\,\,\,|\zeta |\to 0
$$
and this proves \eqref{Im1z1} of Lemma \ref{Lemma_estimate_log} for $|\zeta |\to 0$.
\begin{remark}
\label{S6R1}
Notice that the argument leading to \eqref{C4} also proves the following:
\end{remark}
\vspace{-0.5cm}
 \begin{equation}\label{C4bis}
\int _{r/2}^{2r} \log \left|\log w  \right|\frac {dw} {w-\zeta }
=\log \left|\log r\right|\comM{\left(  {\Log}\left(2-e^{i\theta}\right)-  {\Log}\left(\frac{1} {2}-e^{i\theta}\right)\right)+\mathcal O\left(1\right),}\,\,|\zeta |\to \infty.
\end{equation}

  {\bf \boxed{Step\,\,II.} Limit as $|\zeta |\to + \infty$.}
  We split as well the integral $I(\zeta)$ in two terms
\begin{eqnarray}
I_1(\zeta )&=&J _{ 1 }(\zeta )+J _{ 2 }(\zeta )\label{Im1z2}\\
 J_{1}(\zeta )&=&\int _0^1 \log \left|\log w  \right|\left(\frac {1} {w-\zeta }-\frac {1} {w+1} \right)dw \label{Im1z3}\\
\quad J_{2}(\zeta )&=&\int _1^\infty \log \left|\log w  \right|\left(\frac {1} {w-\zeta }-\frac {1} {w+1} \right)dw \label{Im1z4}
\end{eqnarray}
and notice as well that $J_{1}(\zeta)$ converges toward a finite real limit:
\begin{equation}
\label{S6E1}
\lim _{|\zeta |\to \infty }J _{ 1 }(\zeta )=-\int _0^1 \frac {\log \left|\log w  \right|} {w+1}dw=\frac {\log (2)^2} {2}  
\end{equation}  
We write $J_{2}(\zeta)$ as the sum
\begin{align}
J_{2}(\zeta)&=\int _1^{r/2} \log \left|\log w  \right|\frac {\zeta +1} {(w-\zeta)(w+1) }dw
+\int _{r/2}^{2r} \log \left|\log w  \right|\frac {\zeta +1} {(w-\zeta)(w+1) }dw+\nonumber\\
&+\int _{2r}^\infty \log \left|\log w  \right|\frac {\zeta +1} {(w-\zeta)(w+1) }dw.\label{decomp}
\\
&\comM{=J_{2,1}+J_{2,2}+J_{2,3}.}
\end{align}
As previously, we examine the asymptotic behaviour of each of the three terms in the right hand side  of \eqref{decomp}. For the first term we notice the following:
\begin{align}\label{interm}
\comM{J_{2,1}=} \int _1^{r/2} \log \left|\log w  \right|\frac {\zeta +1} {(w-\zeta)(w+1) }dw&=\int _1^{r/2} \log \left|\log w  \right|\frac {\zeta +1} {(w-\zeta)w }dw-\nonumber\\
&-\int _1^{r/2} \log \left|\log w  \right|\frac {\zeta +1} {(w-\zeta)w(w+1) }dw.
\\
& \comM{= J_{2,2,1} + J_{2,2,2}} \nonumber 
\end{align}
The Lebesgue's convergence Theorem guarantees the convergence of \comM{$J_{2,2,2}$} towards a real finite limit:
\begin{equation}
\label{interm2}
\lim_{ |\zeta |\to \infty}\int _1^{r/2} \log \left|\log w  \right|\frac {\zeta +1} {(w-\zeta)w(w+1) }dw=
-\int _1^{\infty} \frac {\log \left|\log w  \right|} {w(w+1) }dw 
\end{equation}
On the other hand, the first term of \eqref{interm} can be written as
\begin{eqnarray*}
\comM{J_{2,2,1} =}\int _1^{r/2} \log \left|\log w  \right|\frac {\zeta +1} {(w-\zeta)w }dw
 &=&\int _1^{r/2} \log \left|\log w  \right|\frac {\zeta +1} {\zeta \left(\frac {w} {\zeta }-1\right)w }dw\\
 &=&- \sum_{ k=0 } ^\infty \frac {\zeta +1} {\zeta ^{k+1}}\int _1^{r/2} \log \left|\log w  \right|w^{k-1}dw\\
&&\hskip -3cm =-\frac {\zeta +1} {\zeta }\int _1^{r/2} \log \left|\log w  \right|w^{-1}dw- \sum_{ k=1 } ^\infty \frac {\zeta +1} {\zeta ^{k+1}}\int _1^{r/2} \log \left|\log w  \right|w^{k-1}dw.
\end{eqnarray*}
Using \comM{\eqref{app1_4} for $k \ge 1$}, we find
 \begin{eqnarray*}
 &&\int _1^{r/2} \log \left|\log w  \right|w^{k-1}dw=\frac {(r/2)^k\log(\log(r/2)))} {k}-\frac {\Psi (k\log(r/2))} {k}+\frac {\gamma_E+\log k} {k}\\
 &&=\left(\frac {r} {2}\right)^k\frac {\log(\log(r/2)))} {k}-
 \left(\frac {r} {2} \right)^k\frac {1} {k^2\log(r/2)}\left(1+\mathcal O \left(\frac {1} {k\log(r/2)} \right) \right)+\frac {\gamma_E+\log k} {k},
 \,\hbox{as}\,\,|\zeta |\to \infty,
\end{eqnarray*}
and then the first term in the right hand side of \eqref{interm} satifies:
\begin{eqnarray*}
&&\comM{J_{2,2,1}=}\int _1^{r/2} \log \left|\log w  \right|\frac {\zeta +1} {(w-\zeta)w }dw
 =-\frac {\zeta +1} {\zeta }\log(r/2)(-1+\log(\log(r/2)))-\nonumber\\
&&\qquad -\frac {\zeta +1} {\zeta }\log(\log(r/2))\sum_{ k=1 } ^\infty \left(\frac {r} {2\zeta }\right)^k\frac {1} {k}
 +\frac {\zeta +1} {\zeta \log(r/2)}\sum_{ k=1 } ^\infty \left(\frac {r} {2 \zeta } \right)^k\frac {1} {k^2}\left(1+\mathcal O \left(\frac {1} {k\log(r/2)} \right) \right)\comM{-}\nonumber\\
 &&\hskip 5.3cm \textcolor{black}{-\sum_{ k=1 } ^\infty \frac {(\zeta +1)} {\zeta ^{k+1}}\frac {(\gamma_E+\log k)} {k}},\,\,\hbox{as}\,\,|\zeta |\to \infty \nonumber\\
&&=-\frac {\zeta +1} {\zeta }\log(r/2)(-1+\log(\log(r/2)))
+\frac {\zeta +1} {\zeta }\log(\log(r/2))\Log\left(1-\frac {e^{-i\theta}} {2}\right)+\nonumber\\
&&\qquad+\frac {\zeta +1} {\zeta \log(r/2)}\sum_{ k=1 } ^\infty \left(\frac {r} {2 \zeta } \right)^k
\frac {1} {k^2}\left(1+\mathcal O \left(\frac {1} {k\log(r/2)} \right) \right) -\textcolor{black}{\frac {\zeta +1} {\zeta }\left(\gamma_E \Log\left(\frac {\zeta -1} {\zeta } \right)+\right.}\nonumber\\
&&\qquad\textcolor{black}{\left. + PolyLog^{(1, 0)}
\left[ 1, \frac {1} {\zeta }\right] \right)}
\end{eqnarray*}
Using again \eqref{S6E289}:
\begin{eqnarray}
\int _1^{r/2} \log \left|\log w  \right|\frac {\zeta +1} {(w-\zeta)w }dw&=&-\log(r)(-1+\log(\log(r)))+
\nonumber\\
&&\hskip -1cm +\log(\log(r))\Log\left(1-\frac {e^{-i\theta}} {2}\right)+\mathcal O\left(\comM{1} \right),\,\,\hbox{as}\,\,|\zeta |\to \infty.\label{CC1}
\end{eqnarray}

We consider now the third term in the right hand side of \eqref{decomp}. We must split again the integral in two terms as follows:
\begin{eqnarray}\label{lalala}
\comM{J_{2,3}= }\int _{2r}^\infty \log \left|\log w  \right|\frac {\zeta +1} {(w-\zeta)(w+1) }dw=\int _{2r}^\infty \log \left|\log w  \right|\frac {\zeta +1} {(w-\zeta)w }dw-
\nonumber\\
-\int _{2r}^\infty \log \left|\log w  \right|\frac {\zeta +1} {(w-\zeta)w(w+1) }dw.
\end{eqnarray}
The second term of \eqref{lalala} converges to zero as $r\to \infty$. We write the first term as follows:
\begin{eqnarray}
\int _{2r}^\infty \log \left|\log w  \right|\frac {\zeta +1} {(w-\zeta)w}dw&=&
\int  _{ 2r }^{\infty} \log \left|\log w  \right|\frac {\zeta +1} {w^2 \left(1-\frac {\zeta } {w }\right) }dw \nonumber\\
&=&
\sum _{ k=0 }^\infty (\zeta +1)\zeta ^k \int  _{ 2r }^{\infty} \log \left|\log w  \right|\frac {dw} {w^{k+2} }.
\end{eqnarray}
Using \comM{\eqref{app1_5}}, we get

\begin{eqnarray*}
\int  _{ 2r }^{\infty} \log \left|\log w  \right|\frac {dw} {w^{k+2} }&=&\frac {(2r)^{-(k+1)}\log\log(2r)} {k+1}+\frac {(2r)^{-k+1}} {(k+1)^2\log (2r)} \left( 1+\mathcal O \left(\frac {1} {(k+1)\log (2r)} \right)\right)
\end{eqnarray*}
and
we obtain for the first term in the right hand side of \eqref{lalala}:
\begin{eqnarray*}
\int _{2r}^\infty \log \left|\log w  \right|\frac {\zeta +1} {(w-\zeta)w}dw
 &=&\frac {\zeta +1} {\zeta }\log\log(2r)\sum _{ k=0 }^\infty\left(\frac {\zeta } {2r} \right)^{k+1} \frac {1} {k+1}+\nonumber \\
 &&\hskip -1.5cm +
 \frac {\zeta +1} {\zeta \log (2r)}\sum _{ k=0 }^\infty \left(\frac {\zeta } {2r} \right)^{k+1}
  \frac {1} {(k+1)^2} \left( 1+\mathcal O \left(\frac {1} {(k+1)\log (2r)} \right)\right)\nonumber\\
&=&-\frac {\zeta +1} {\zeta }\log\log(2r)\Log\left(1-\frac {e^{i\theta}} {2} \right)+\nonumber\\
&&\hskip -1.5cm +\frac {\zeta +1} {\zeta \log (2r)}\sum _{ k=0 }^\infty \left(\frac {\zeta } {2r} \right)^{k+1}\frac {1} {(k+1)^2} \left( 1+\mathcal O \left(\frac {1} {(k+1)\log (2r)} \right)\right).
\end{eqnarray*}
After using again \eqref{S6E289} we deduce:
\begin{eqnarray}\label{CC2}
\int _{2r}^\infty \log \left|\log w  \right|\frac {\zeta +1} {(w-\zeta)w}dw=-\log\log(r)\Log\left(1-\frac {e^{i\theta}} {2} \right)+
\mathcal O\left( \frac {1} {\log r}\right),\,\,\,\hbox{as}\,\,r\to \infty.
\end{eqnarray}
Using now \eqref{S6E1}, \eqref{interm2},  \eqref{CC1},  \eqref{CC2}  and \eqref{C4bis} in Remark \ref{S6R1},
\begin{eqnarray*}
I(\zeta )=\frac {\log (2)^2} {2} + \int _1^{\infty} \frac {\log \left|\log w  \right|} {w(w+1) }dw -\log(r)(-1+\log(\log(r)))+
\log(\log(r))\Log\left(1-\frac {e^{-i\theta}} {2}\right)-\\
-\log\log(r)\Log\left(1-\frac {e^{i\theta}} {2} \right)+\log \left|\log r\right|\comM{\left( \Log\left(2-e^{i\theta}\right) - \Log \left(\frac{1} {2}-e^{i\theta}\right)\right)}+
\mathcal O\left(\comM{1}\right),\,\,\,\hbox{as}\,\,r\to \infty.
\end{eqnarray*}  
The same arguments as in Step I yield first
\begin{eqnarray}
I(\zeta )&=&\frac {\log (2)^2} {2} + \int _1^{\infty} \frac {\log \left|\log w  \right|} {w(w+1) }dw
-\log(r)(-1+\log|\log(r)|)+\nonumber \\
&+&\log \left|\log (r)  \right| \comM{(\pi-\theta)}+\mathcal O\left(
\comM{1}\right),\,\,\,|\zeta |\to \infty,
\end{eqnarray}
and  then, property \eqref{Im1z1} of Lemma \ref{Lemma_estimate_log} for $|\zeta |\to \infty$.
\qed

\section*{Glossary}

\begin{tabular}{l l}
\hline
 Arg  & Principal value of the argument of a complex number: Arg$(z) \in (- \pi,\pi]$\\
$\arg$  & argument of a complex number: $\arg(z) \in [0,2\pi]$  \\
$\a$  & Multiplicative constant of the fragmentation rate \\
$\g$  &  Power of the fragmentation rate  \\
$B(x)$ 	& Fragmentation rate\\
$f(t,x)$ 	& Density of particles \\
$g(x)$ 	&        Stationary profile \\
$G(s)$ 	&        Mellin transform of $g$ \\
$k(y,x)$ 	&  Fragmentation kernel\\
$k_0(z)$ 	& Rescaled fragmentation rate \\
$K_0(s)$ 	&  Mellin transform of $k_0$\\
Log  & Logarithm of a complex number: Log$(z) = \log(|z|) + i $Arg$(z)$\\
$\log$  & Logarithm of a complex number:  $\log(z) = \log(|z|) + i \arg(z)$ \\
$L^1(\R^+, \mu)$ &$\biggl\{f:\R^+\to \R,\quad \int\limits_0^\infty \vert f(x)  \vert d\mu (x) <\infty\biggr\}$\\
$L^1(\R^+)$& $L^1(\R^+,dx)$ \\
$\mathcal{M}(\Omega)$ 	&Set of bounded  (or finite) measures over $\Omega$\\
 $\mathcal{M}_{loc}(\Omega)$&Set of measures over $\Omega$ which are finite over the compact sets\\
$M[\mu]$ 	& Mellin transform of a measure $\mu$ defined as 
$M[\mu](s):= \dst\int_0^{+ \infty} x^{s-1} d \mu(x)$ for $s\in \mathbb{C}$.\\
$t \in \mathbb{R^+}$ & Time \\
$x \in \mathbb{R^+}$	& Size of particles \\
\hline
\end{tabular}

\begin{acknowledgments}
 We would like to thank warmly Dr. W.F. Xue, School of Biosciences, Kent University, 
 for leading us to this problem, sharing with us new experimental data, 
 and for numerous interesting discussions.\\
 M.E. is supported by DGES Grant MTM2014-52347-C2-1-R and Basque Government Grant IT641-13. M. D. and M.T. were supported by the ERC Starting Grant SKIPPER$^{AD}$ (number 306321). 
\end{acknowledgments}

\bibliographystyle{plain}
\bibliography{bibli_14}

\begin{thebibliography}{10}

\bibitem{balague:hal-00683148}
D.~Balagu{\'e}, J.~Ca{\~n}izo, and P.~Gabriel.
\newblock {Fine asymptotics of profiles and relaxation to equilibrium for
  growth-fragmentation equations with variable drift rates}.
\newblock {\em {Kinetic and related models}}, 6(2):219--243, 2013.

\bibitem{Zakharov98}
A.M. Balk and V.E. Zakharov.
\newblock Stability of weak turbulence kolmogorov spectra.
\newblock {\em Amer. Math. Soc. Transl}, 182:31--81, 1998.

\bibitem{bertoin2005}
Jean Bertoin and Servet Martínez.
\newblock Fragmentation energy.
\newblock {\em Adv. in Appl. Probab.}, 37(2):553--570, 06 2005.

\bibitem{bhattacharya}
R.~Bhattacharya and E.C. Waymire.
\newblock {\em A Basic Course in Probability Theory}.
\newblock Universitext. Springer New York, 2007.

\bibitem{bortz2015inverse}
D.~M. Bortz, E.~C. Byrne, and I.~Mirzaev.
\newblock Inverse problems for a class of conditional probability
  measure-dependent evolution equations.
\newblock {\em arXiv preprint arXiv:1510.01355}, 2015.

\bibitem{BDE14}
T.~Bourgeron, M.~Doumic, and M.~Escobedo.
\newblock {Estimating the Division Rate of the Self-Similar
  Growth-Fragmentation Equation}.
\newblock {\em {Inverse Problem}}, 30(2):025007, 28, January 2014.

\bibitem{CCM}
M.~J. C\'aceres, J.~A. Ca\~nizo, and S.~Mischler.
\newblock Rate of convergence to the remarkable state for fragmentation and
  growth-fragmentation equations.
\newblock {\em Journal de Mathematiques Pures et Appliquees}, 96(4):334--362,
  2011.

\bibitem{DHRR}
M.~Doumic, M.~Hoffmann, P.~Reynaud, and V.~Rivoirard.
\newblock Nonparametric estimation of the division rate of a size-structured
  population.
\newblock {\em SIAM J. on Numer. Anal.}, 50(2):925--950, 2012.

\bibitem{DG10}
M.~Doumic~Jauffret and P.~Gabriel.
\newblock Eigenelements of a generall agregation-fragmentation model.
\newblock {\em Mathematical Models and Methods in Applied Sciences},
  20(05):757--783, 2010.

\bibitem{DPZ09}
M.~Doumic-Jauffret, B.~Perthame, and J.~Zubelli.
\newblock {Numerical Solution of an Inverse Problem in Size-Structured
  Population Dynamics}.
\newblock {\em {Inverse Problems}}, 25(4):045008, February 2009.

\bibitem{EMR05}
M.~Escobedo, S.~Mischler, and M.~R.~Ricard.
\newblock On self-similarity and stationary problem for fragmentation and
  coagulation models.
\newblock {\em Annales de l'institut Henri Poincaré (C) Analyse non
  linéaire}, 22(1):99--125, 2005.

\bibitem{hoang}
V.~H. Hoang.
\newblock {Estimating the Division Kernel of a Size-Structured Population}.
\newblock working paper or preprint, December 2015.

\bibitem{hoang:hal-01623403}
Van~Ha Hoang, Thanh~Mai Pham~Ngoc, Vincent Rivoirard, and Viet~Chi Tran.
\newblock {Nonparametric estimation of the fragmentation kernel based on a PDE
  stationary distribution approximation}.
\newblock working paper or preprint, October 2017.

\bibitem{Adelaide}
M.~{Hoffmann} and A.~{Olivier}.
\newblock {Nonparametric estimation of the division rate of an age dependent
  branching process}.
\newblock {\em ArXiv e-prints}, December 2014.

\bibitem{LM2}
P.~Lauren{\c{c}}ot and S.~Mischler.
\newblock On coalescence equations and related models.
\newblock In {\em Modeling and computational methods for kinetic equations},
  Model. Simul. Sci. Eng. Technol., pages 321--356. Birkh\"auser Boston,
  Boston, MA, 2004.

\bibitem{Rajchman}
R.~Lyons.
\newblock Seventy years of rajchman measures.
\newblock {\em {\it Dedicated to Jean-Pierre Kahane}}, 1993.

\bibitem{M1}
P.~Michel.
\newblock Existence of a solution to the cell division eigenproblem.
\newblock {\em Math. Models Methods Appl. Sci.}, 16(7, suppl.):1125--1153,
  2006.

\bibitem{MMP05}
P.~Michel, S.~Mischler, and B.~Perthame.
\newblock General relative entropy inequality: an illustration on growth
  models.
\newblock {\em Journal de Mathématiques Pures et Appliquées}, 84(9):1235 --
  1260, 2005.

\bibitem{SM15}
S.~Mischler and J.~Scher.
\newblock Spectral analysis of semigroups and growth-fragmentation equations.
\newblock {\em Annales de l'IHP, S\'erie C}, 2015.

\bibitem{misra1986transform}
O.P. Misra and J.L. Lavoine.
\newblock {\em Transform Analysis of Generalized Functions}.
\newblock North-Holland Mathematics Studies. Elsevier Science, 1986.

\bibitem{PZ}
B.~Perthame and J.P. Zubelli.
\newblock On the inverse problem for a size-structured population model.
\newblock {\em Inverse Problems}, 23(3):1037--1052, 2007.

\bibitem{Rem}
Reinhold Remmert.
\newblock Wielandt's {T}heorem {A}bout the {$\Gamma $}-{F}unction.
\newblock {\em The American Mathematical Monthly}, 103(3):214--220, 1996.

\bibitem{robert:hal-00981312}
Lydia Robert, Marc Hoffmann, Nathalie Krell, St{\'e}phane Aymerich,
  J{\'e}r{\^o}me Robert, and Marie Doumic.
\newblock {Division in Escherichia coli is triggered by a size-sensing rather
  than a timing mechanism}.
\newblock {\em {BMC Biology}}, 12(1):17, 2014.

\bibitem{Rouviere}
F.~Rouvi\`ere.
\newblock {\em Petit guide de calcul diff\'erentiel : \`a l'usage de la licence
  et de l'agr\'egation}.
\newblock Cassini, 2003.

\bibitem{Stewart90}
I.W. Stewart.
\newblock On the coagulation-fragmentation equation.
\newblock {\em Zeitschrift für angewandte Mathematik und Physik ZAMP},
  41(6):917--924, 1990.

\bibitem{XR13}
W-F Xue and S~E Radford.
\newblock An imaging and systems modeling approach to fibril breakage enables
  prediction of amyloid behavior.
\newblock {\em Biophys. Journal}, 105:2811--2819, 2013.

\end{thebibliography}
\end{document}